\theoremstyle{plain}
\newtheorem{ozn}{Definition}[section]
\newtheorem{thm}{Theorem}[section]
\newtheorem{zau}{Remark}[section]
\newtheorem{lema}{Lemma}[section]
\newtheorem{remark}{Remark}[section]
\newcommand{\me}{\mathbf}
\newcommand{\mr}{\mathbb}
\newcommand{\mt}{\mathsf}
\newcommand{\md}{\mathcal}
\newcommand{\ld}{\left}
\newcommand{\rd}{\right}
\newcommand{\ip}{\int_{-\pi}^{\pi}}
\newcommand{\be}{\begin{equation}}
\newcommand{\ee}{\end{equation}}
\newcommand{\bem}{\begin{multline}}
\newcommand{\eem}{\end{multline}}
\newcommand{\bml}{\begin{multline*}}
\newcommand{\eml}{\end{multline*}}
\newcommand{\beg}{\begin{gather}}
\newcommand{\eeg}{\end{gather}}
\begin{document}

\title{On filtering problem for stochastic processes with periodically correlated increments}

\author{
Maksym Luz\thanks {BNP Paribas Cardif in Ukraine, Kyiv, Ukraine, maksym.luz@gmail.com},
Mikhail Moklyachuk\thanks
{Department of Probability Theory, Statistics and Actuarial
Mathematics, Taras Shevchenko National University of Kyiv, Kyiv 01601, Ukraine, moklyachuk@gmail.com}
}

\date{\today}

\maketitle

\renewcommand{\abstractname}{Abstract}
\begin{abstract}

We deal with the problem of the mean square optimal estimation of linear transformations of the unobserved values of
a continuous time  stochastic process with periodically correlated increments.
Estimates are based on observations of the process with a continuous time  stochastic noise process which is periodically correlated increments as well.
To solve the problem, we transform the processes  to  infinite dimensional vector valued stationary sequences.
We obtain formulas  for calculating  the mean square errors  and the spectral characteristics  of the optimal estimates  of the transformations.
Formulas determining  the least favorable spectral densities and the minimax-robust spectral characteristics  of the optimal estimates  of transformations are derived.
\end{abstract}

\maketitle

\textbf{Keywords}: {Periodically Correlated Increments, Minimax-Robust Estimate, Mean Square Error}

\maketitle

\vspace{2ex}
\textbf{\bf AMS 2010 subject classifications.} Primary: 60G10, 60G25, 60G35, Secondary: 62M20, 62P20, 93E10, 93E11

\section*{Introduction}

In this article, we deal with a filtering problem in the case where both  a signal $\xi(t)$, $t\in\mr R$, and a noise $\eta(t)$, $t\in\mr R$,  are two uncorrelated continuous time stochastic processes    with periodically
stationary (cyclostationary, periodically correlated) increments $\xi^{(d)}(t,\tau T)=\Delta_{T\tau}^d \xi(t)$ and $\eta^{(d)}(t,\tau T)=\Delta_{T\tau}^d \eta(t)$ of order $d$ and period $T$, where $\Delta_{s} \xi(t)=\xi(t)-\xi(t-s)$.

The classical estimation methods of solution  of interpolation, extrapolation (prediction) and filtering problems for stationary stochastic sequences and processes
are developed by Kolmogorov \cite{Kolmogorov},  Wiener \cite{bookWien}, and  Yaglom \cite{Yaglom}.
 
The concept of stationarity   admits some generalizations, a combination of two of which --   stationary $d$th increments and periodical  correlation -- is in scope of this article.
Random processes with stationary $d$th increments $x(t)$ were introduced   by Yaglom and Pinsker \cite{Pinsker}.
They described the spectral representation of such process and solved the extrapolation problem for these processes.
 The minimax-robust extrapolation, interpolation and filtering  problems for stochastic  processes with stationary increments were investigated by Luz and Moklyachuk \cite{Luz_Mokl_book}.

Dubovetska and Moklyachuk \cite{Dubovetska6}  derived the  classical and minimax-robust estimates for another generalization of stationary processes --  periodically  correlated (cyclostationary) processes, introduced by Gladyshev \cite{Glad1963}.
Periodically  correlated  processes are widely used in signal processing and communications (see Napolitano \cite{Napolitano} for a review of recent works on cyclostationarity and its applications).

In this article, we deal with the problem of the mean-square optimal estimation of the linear transformation  $A\xi=\int_{0}^{\infty}a(t)\xi(-t)dt$
which depend on the unobserved values of a continuous time stochastic process $\xi(t)$ with periodically stationary $d$th
increments. The noise stochastic process $\eta(t)$ is uncorrelated with $\xi(t)$ and has periodically stationary increments as well.
Observations $\zeta(t)=\xi(t)+\eta(t)$ are available at points $t\leq0$.

Similar problems for discrete time processes have been studied by Kozak and Moklyachuk \cite{Kozak_Mokl}, Luz and Moklyachuk \cite{luz3,Luz_Mokl_filtr_GMI,Luz_Mokl_filtr_PCI}.
The extrapolation problem for continuous time stochastic process $\xi(t)$ with periodically stationary $d$th increments based on  observations   without noise
was introduced and studied by Luz and Moklyachuk \cite{Luz_Mokl_extra_cont_PCI}.

The article is organized as follows.
   In section \ref{section_PCI_c}, we describe a presentation of a continuous time stochastic  process with the periodically stationary increments as a  stationary H-valued increment sequence.
    The traditional Hilbert space projection method of filtering is developed in section \ref{classical_filtering_f_n-n_c}.
    Particularly, formulas for calculating  the mean-square error and the spectral characteristic of the optimal linear
estimates of the functional $A\xi$  are derived under some conditions on spectral densities. It's also shown that this method can't be applied for the functional $A_{NT}\xi$ with a finite interval of integration in a general case due to strict conditions on the function $a(t)$, $t\geq0$ (which is not the case for periodically correlated noise).
An approach to solution of the filtering problem which is based on factorizations of spectral densities is developed
in section \ref{classical_filtering_fact_f_n-n_c}.
In section \ref{minimax_filtering} we present our results on minimax-robust filtering for the studied processes:
relations that determine the least favourable spectral densities and the minimax spectral characteristics are derived for some classes of spectral densities.

\section{Stochastic processes with periodically correlated $d$th increments}
\label{section_PCI_c}

\begin{ozn}[Gladyshev \cite{Glad1963}] A mean-square continuous stochastic process
$\eta:\mathbb R\to H=L_2(\Omega,\mathcal F,\mathbb P)$, with $\textsf {E} \eta(t)=0,$ is called periodically correlated (PC) with period  $T$, if its correlation function  $K(t,s)={\textsf{E}}{\eta(t)\overline{\eta(s)}}$  for all  $t,s\in\mathbb R$ and some fixed $T>0$ is such that
\[
K(t,s)={\textsf{E}}{\eta(t)\overline{\eta(s)}}={\textsf{E}}{\eta(t+T)\overline{\eta(s+T)}}=K(t+T,s+T).
\]
\end{ozn}

For a given stochastic process $\{\xi(t),t\in\mathbb R\}$, consider the stochastic $d$th increment process
\begin{equation}
\label{oznachPryrostu_cont}
\xi^{(d)}(t,\tau)=(1-B_{\tau})^d\xi(t)=\sum_{l=0}^d(-1)^l{d \choose l}\xi(t-l\tau),
\end{equation}
with the step $\tau\in\mr R$, generated by the stochastic process $\xi(t)$. Here $B_{\tau}$ is the backward shift operator: $B_{\tau}\xi(t)=\xi(t-\tau)$, $\tau\in \mr R$.
We prefer to use the notation $\xi^{(d)}(t,\tau)$ instead of widely used $\Delta_{\tau}^{d}\xi(t)$ to avoid a duplicate with the mean square error notation.

\begin{ozn}[Luz and Moklyachuk \cite{Luz_Mokl_extra_cont_PCI}]
\label{OznPeriodProc2_cont}
A stochastic process $\{\xi(t),t\in\mathbb R\}$ is called a  stochastic
process with periodically stationary (periodically correlated) increments  with the step $\tau\in\mr Z$ and the period $T>0$ if the mathematical expectations
\begin{align*}
\mt E\xi^{(d)}(t+T,\tau T) & =  \mt E\xi^{(d)}(t,\tau T)=c^{(d)}(t,\tau T),
\\
\mt E\xi^{(d)}(t+T,\tau_1 T)\xi^{(d)}(s+T,\tau_2 T)
& =  D^{(d)}(t+T,s+T;\tau_1T,v_2T)
\\
& =  D^{(d)}(t,s;\tau_1T,\tau_2T)
\end{align*}
exist for every  $t,s\in \mr R$, $\tau_1,\tau_2 \in \mr Z$ and for some fixed $T>0$.
\end{ozn}

The functions $c^{(d)}(t,\tau T)$ and  $D^{(d)}(t,s;\tau_1T,\tau_2 T)$ from the Definition \ref{OznPeriodProc2_cont} are called the \emph{mean value} and  the \emph{structural function} of the stochastic
process $\xi(t)$ with periodically stationary (periodically correlated) increments.

\begin{zau}
For spectral properties of one-pattern increment sequence $\chi_{\mu,1}^{(n)}(\xi(m)):=\xi^{(n)}(m,\mu)=(1-B_{\mu})^n\xi(m)$
see, e.g., \cite{Luz_Mokl_book}, p. 1-8; \cite{Gihman_Skorohod}, p. 48--60, 261--268; \cite{Yaglom}, p. 390--430.
The corresponding results for continuous time increment process $\xi^{(n)}(t,\tau)=(1-B_{\tau})^n\xi(t)$ are described in \cite{Yaglom:1955}, \cite{Yaglom}.

For a review of the properties of periodically correlated processes, we refer to \cite{DubovetskaMoklyachuk2013}, \cite{MoklyachukGolichenko2016}. Here we present a generalization of these properties on the processes with periodically correlated increments from Definition \ref{OznPeriodProc2_cont}.
\end{zau}

For the stochastic process $\{\xi(t), t\in \mathbb R\}$ with periodically correlated  increments $\xi^{(d)}(t,\tau T)$ and the integer step $\tau$,   construct
a sequence of stochastic functions
\begin{equation} \label{xj}
\{\xi^{(d)}_j(u):=\xi^{(d)}_{j,\tau}(u)=\xi^{(d)}_j(u+jT,\tau T),\,\, u\in [0,T), j\in\mathbb Z\}.
\end{equation}

Sequence (\ref{xj}) forms a $L_2([0,T);H)$-valued
stationary increment sequence $\{\xi^{(d)}_j,j\in\mathbb Z\}$ with the structural function
\begin{align*}
B_{\xi^{(d)}}(l,j)&= \langle\xi^{(d)}_l,\xi^{(d)}_j\rangle_H =\int_0^T
\textsf{E}[\xi^{(d)}_j(u+lT,\tau_1 T)\overline{\xi^{(d)}_j(u+jT,\tau_2 T)}]du
\\&=\int_0^T D^{(d)}(u+(l-j)T,u;\tau_1T,\tau_2T) du =
 B_{\xi^{(d)}}(l-j).
 \end{align*}
Making use of the orthonormal basis
\[
\{\widetilde{e}_k=\frac{1}{\sqrt{T}}e^{2\pi
i\{(-1)^k\left[\frac{k}{2}\right]\}u/T}, k=1,2,3,\dots\}, \quad
\langle \widetilde{e}_j,\widetilde{e}_k\rangle=\delta_{kj},
\]
 the stationary increment sequence  $\{\xi^{(d)}_j,j\in\mathbb Z\}$ can be represented in the form
\begin{equation} \label{zeta}
\xi^{(d)}_j= \sum_{k=1}^\infty \xi^{(d)}_{kj}\widetilde{e}_k,\end{equation}
where
\[\xi^{(d)}_{kj}=\langle\xi^{(d)}_j,\widetilde{e}_k\rangle =
\frac{1}{\sqrt{T}} \int_0^T \xi^{(d)}_j(v)e^{-2\pi
i\{(-1)^k\left[\frac{k}{2}\right]\}v/T}dv.\]
We call this sequence
$\{\xi^{(d)}_j,j\in\mathbb Z\},$
   or the corresponding to it vector sequence
\be \label{generted_incr_seq}
  \{\vec\xi^{(d)}(j,\tau)=\vec\xi^{(d)}_j=(\xi^{(d)}_{kj}, k=1,2,\dots)^{\top}=(\xi^{(d)}_{k}(j,\tau), k=1,2,\dots)^{\top},
j\in\mathbb Z\},\ee
\emph{an infinite dimension vector stationary increment sequence} generated by the increment process $\{\xi^{(d)}(t,\tau T),t\in\mathbb R\}$.
Further, we will omit the word vector in the notion generated vector stationary increment sequence.

Components $\{\xi^{(d)}_{kj}\}: k=1,2,\dots;j\in\mathbb Z$  of the generated stationary increment sequence
$\{\xi^{(d)}_j,j\in\mathbb Z\}$ are such that, \cite{Kallianpur}, \cite{Moklyachuk:1981}
\[
\textsf{E}{\xi^{(d)}_{kj}}=0, \quad \|\xi^{(d)}_j\|^2_H=\sum_{k=1}^\infty
\textsf{E}|\xi^{(d)}_{kj}|^2\leq P_{\xi^{(d)}}=B_{\xi^{(d)}}(0),
\]
and
\[\textsf{E}\xi^{(d)}_{kl}\overline{\xi^{(d)}_{nj}}=\langle
R_{\xi^{(d)}}(l-j;\tau_1,\tau_2)\widetilde{e}_k,\widetilde{e}_n\rangle.
\]
The \emph{structural function}  $R_{\xi^{(d)}}(j):=R_{\xi^{(d)}}(j;\tau_1,\tau_2)$  of the generated stationary increment
sequence $\{\xi^{(d)}_j,j\in\mathbb Z\}$
 is a correlation operator function.
 The correlation operator $R_{\xi^{(d)}}(0)=R_{\xi^{(d)}}$ is a
kernel operator and its kernel norm satisfies the following limitations:
\[
\|\xi^{(d)}_j\|^2_H=\sum_{k=1}^\infty \langle R_{\xi^{(d)}}
\widetilde{e}_k,\widetilde{e}_k\rangle\leq P_{\xi^{(d)}}. \]

 Suppose that  the structural function $R_{\xi^{(d)}}(j)$ admits a representation
\[
\langle R_{\xi^{(d)}}(j;\tau_1, \tau_2)\widetilde{e}_k,\widetilde{e}_n\rangle=\frac{1}{2\pi} \int _{-\pi}^{\pi}
e^{ij\lambda}(1-e^{-i\tau_1\lambda})^d(1-e^{i\tau_2\lambda})^d\frac{1}
{\lambda^{2d}}\langle f(\lambda) \widetilde{e}_k,\widetilde{e}_n\rangle d\lambda.
\]
Then
$f(\lambda)=\{f_{kn}(\lambda)\}_{k,n=1}^\infty$ is a \emph{spectral density function} of the generated stationary increment sequence $\{\xi^{(d)}_j,j\in\mathbb Z\}$. It  is a positive valued operator  functions of variable
 $\lambda\in [-\pi,\pi)$, and for almost all   $\lambda\in [-\pi,\pi)$ it is a kernel operator with an integrable kernel norm
\begin{multline} \label{P1}
\sum_{k=1}^\infty \frac{1}{2\pi} \int _{-\pi}^\pi (1-e^{-i\tau_1\lambda})^d(1-e^{i\tau_2\lambda})^d\frac{1}
{\lambda^{2d}} \langle f(\lambda)
\widetilde{e}_k,\widetilde{e}_k\rangle d\lambda
\\
=
\sum_{k=1}^\infty\langle R_{\xi^{(d)}}
\widetilde{e}_k,\widetilde{e}_k\rangle=\|{\zeta}_j\|^2_H\leq P_{\xi^{(d)}}.
\end{multline}

The stationary $d$th increment sequence $\vec{\xi}^{(d)}_j$ admits the spectral representation \cite{Karhunen}
\begin{equation}
\label{SpectrPred_incr_xi_c}
\vec{\xi}^{\,(d)}_j=\int_{-\pi}^{\pi}e^{i \lambda j}(1-e^{-i\tau\lambda})^{d}\frac{1}{(i\lambda)^{d}}d\vec{Z}_{\xi^{(d)}}(\lambda),
\end{equation}
where $\vec{Z}_{\xi^{(d)}}(\lambda)=\{Z_{k}(\lambda)\}_{k=1}^{\infty}$ is a vector-valued random process with uncorrelated increments on $[-\pi,\pi)$.

In the space $ H=L_2(\Omega, \cal F, \mt P)$, consider    a closed linear subspace
\[
H(\vec{\xi}^{\,(d)})=\overline{span}\{\xi^{(d)}_{kj}: k=1,2,\dots;\, j\in \mathbb Z \}\]
 generated by the components
of the generated stationary increment sequence
$\vec{\xi}^{\,(d)}=\{\xi^{(d)}_{kj}=\xi^{(d)}_{k}(j,\tau),\,\tau>0\}$.
For $q\in \mathbb Z$, consider also a closed linear subspace
\[H^{q}(\vec{\xi}^{\,(d)})=\overline{span}\{\xi^{(d)}_{kj}: k=1,2,\dots;\, j\leq q \}.\]
Define a subspace
\[ S(\vec{\xi}^{\,(d)})=\bigcap_{q\in \mathbb{Z}} H^{q}(\vec{\xi}^{\,(d)})
\]
  of the Hilbert space $H(\vec{\xi}^{\,(d)})$. The  space $H(\vec{\xi}^{\,(d)})$ admits a decomposition
$ H(\vec{\xi}^{\,(d)})=S(\vec{\xi}^{\,(d)})\oplus R(\vec{\xi}^{\,(d)}) $
where $R(\vec{\xi}^{\,(d)})$ is the orthogonal complement of the subspace $S(\vec{\xi}^{\,(d)})$ in the space $H(\vec{\xi}^{\,(d)})$.
\begin{ozn}
A stationary (wide sense)    increment sequence $\vec\xi^{(d)}_{j}=\{\xi^{(d)}_{kj}\}_{k=1}^{\infty}$ is called regular if $H(\vec{\xi}^{\,(d)})=R(\vec{\xi}^{\,(d)})$,
and it is called singular if
$H(\vec{\xi}^{\,(d)})=S(\vec{\xi}^{\,(d)})$.
\end{ozn}

\begin{thm}
A stationary    increment sequence $\xi^{(d)}_{j}$ is uniquely represented in the form
\begin{equation} \label{rozklad_cont}
\xi^{(d)}_{kj}
=\xi^{(d)}_{S,kj}+\xi^{(d)}_{R,kj}
\end{equation}
where  $\xi^{(d)}_{R,kj}, k=1,\ldots,\infty$, is a regular stationary     increment sequence and
$\xi^{(d)}_{S,kj}, k=1,\dots,\infty$, is a singular stationary   increment sequence.
The   increment sequences
$\xi^{(d)}_{R,kj}$  and
$\xi^{(d)}_{S,kj}$
are
orthogonal for all $ j\in\mathbb{Z} $. They are defined by the formulas
\begin{align*} \xi^{(d)}_{S,kj}&=\mt E[\xi^{(d)}_{kj}|S(\vec{\xi}^{\,(d)})],
\\
\xi^{(d)}_{R,kj}&=\xi^{(d)}_{kj}
-\xi^{(d)}_{S,kj}.
\end{align*}
\end{thm}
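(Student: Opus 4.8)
The plan is to read this as a Wold-type decomposition and to build everything from the orthogonal splitting $H(\vec{\xi}^{\,(d)})=S(\vec{\xi}^{\,(d)})\oplus R(\vec{\xi}^{\,(d)})$ fixed just before the statement. First I would introduce the shift. Since the structural function $R_{\xi^{(d)}}(l-j)$ depends only on $l-j$, the correspondence $\xi^{(d)}_{kj}\mapsto\xi^{(d)}_{k,j+1}$ preserves all inner products on the generating family and therefore extends to a unitary operator $U$ on $H(\vec{\xi}^{\,(d)})$ with $U\,H^{q}(\vec{\xi}^{\,(d)})=H^{q+1}(\vec{\xi}^{\,(d)})$. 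Because the subspaces $H^{q}(\vec{\xi}^{\,(d)})$ are nested and $S(\vec{\xi}^{\,(d)})=\bigcap_{q}H^{q}(\vec{\xi}^{\,(d)})$, the shift carries $S(\vec{\xi}^{\,(d)})$ onto itself, so $S(\vec{\xi}^{\,(d)})$ reduces $U$ and hence so does its orthogonal complement $R(\vec{\xi}^{\,(d)})$. Consequently the orthoprojectors $P_{S}$ and $P_{R}=I-P_{S}$ commute with $U$.

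Next I would define the two sequences by the stated formulas, $\xi^{(d)}_{S,kj}=P_{S}\xi^{(d)}_{kj}=\mt E[\xi^{(d)}_{kj}\,|\,S(\vec{\xi}^{\,(d)})]$ and $\xi^{(d)}_{R,kj}=\xi^{(d)}_{kj}-\xi^{(d)}_{S,kj}=P_{R}\xi^{(d)}_{kj}$. Writing $\xi^{(d)}_{kj}=U^{\,j}\xi^{(d)}_{k0}$ and using $P_{S}U=UP_{S}$, $P_{R}U=UP_{R}$, both sequences are again stationary increment sequences for the same shift, since their inner products depend only on the index difference. As $P_{S}$ and $P_{R}$ are bounded and $\{\xi^{(d)}_{kj}\}$ spans $H(\vec{\xi}^{\,(d)})$, the images span the respective ranges, so $H(\vec{\xi}^{\,(d)}_{S})=S(\vec{\xi}^{\,(d)})$ and $H(\vec{\xi}^{\,(d)}_{R})=R(\vec{\xi}^{\,(d)})$. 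Orthogonality of the two sequences for every $j$ is then immediate, since $\xi^{(d)}_{R,kl}\in R(\vec{\xi}^{\,(d)})\perp S(\vec{\xi}^{\,(d)})\ni\xi^{(d)}_{S,nj}$.

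To identify the regular and singular characters I would compute the past subspaces of each component sequence. Using $S(\vec{\xi}^{\,(d)})\subseteq H^{q}(\vec{\xi}^{\,(d)})$ for every $q$ one gets $P_{S}H^{q}(\vec{\xi}^{\,(d)})=S(\vec{\xi}^{\,(d)})$, whence the past of $\{\xi^{(d)}_{S,kj}\}$ equals $S(\vec{\xi}^{\,(d)})$ for every $q$; its remote past therefore coincides with the whole generated space, so $\{\xi^{(d)}_{S,kj}\}$ is singular. Dually, $P_{R}H^{q}(\vec{\xi}^{\,(d)})=H^{q}(\vec{\xi}^{\,(d)})\cap R(\vec{\xi}^{\,(d)})$, which is closed, so the remote past of $\{\xi^{(d)}_{R,kj}\}$ equals $\bigcap_{q}\bigl(H^{q}(\vec{\xi}^{\,(d)})\cap R(\vec{\xi}^{\,(d)})\bigr)=S(\vec{\xi}^{\,(d)})\cap R(\vec{\xi}^{\,(d)})=\{0\}$, so $\{\xi^{(d)}_{R,kj}\}$ is regular.

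For uniqueness I would take any decomposition $\xi^{(d)}_{kj}=a_{kj}+b_{kj}$ with $\{a_{kj}\}$ singular, $\{b_{kj}\}$ regular and $H(\vec a)\perp H(\vec b)$. Singularity gives $H^{q}(\vec a)=H(\vec a)$ for all $q$, so from $H^{q}(\vec{\xi}^{\,(d)})\subseteq H^{q}(\vec a)\oplus H^{q}(\vec b)=H(\vec a)\oplus H^{q}(\vec b)$ together with the regularity identity $\bigcap_{q}H^{q}(\vec b)=\{0\}$ I obtain $S(\vec{\xi}^{\,(d)})\subseteq H(\vec a)$; the reverse inclusion $H(\vec a)\subseteq S(\vec{\xi}^{\,(d)})$ follows from the determinism of the singular part, since each $a_{kj}$ lies in $H^{q}(\vec a)$ for arbitrarily small $q$ and averaging such representations annihilates the regular contribution, which is confined to the shrinking subspaces $H^{q}(\vec b)$. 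Hence $H(\vec a)=S(\vec{\xi}^{\,(d)})$ and $H(\vec b)=R(\vec{\xi}^{\,(d)})$, forcing $a_{kj}=P_{S}\xi^{(d)}_{kj}$ and $b_{kj}=P_{R}\xi^{(d)}_{kj}$, which matches the stated formulas. I expect the main obstacle to be the rigorous justification that $S(\vec{\xi}^{\,(d)})$ reduces the shift in the infinite-dimensional $H$-valued setting, together with the reverse inclusion in the uniqueness step; once these are secured, the existence, orthogonality and uniqueness follow by the standard projection bookkeeping.
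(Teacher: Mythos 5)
The paper itself states this theorem without proof --- it is quoted as the classical Wold-type decomposition, transplanted to $H$-valued stationary increment sequences --- so there is no in-paper argument to compare against; your route is the standard textbook one. The existence half of your proposal is sound: stationarity of the structural function does yield a unitary shift $U$ with $U H^{q}(\vec{\xi}^{\,(d)})=H^{q+1}(\vec{\xi}^{\,(d)})$, hence $US(\vec{\xi}^{\,(d)})=S(\vec{\xi}^{\,(d)})$, so $S(\vec{\xi}^{\,(d)})$ reduces $U$ and the projectors $P_{S},P_{R}$ commute with it; the identities $P_{S}H^{q}(\vec{\xi}^{\,(d)})=S(\vec{\xi}^{\,(d)})$ and $P_{R}H^{q}(\vec{\xi}^{\,(d)})=H^{q}(\vec{\xi}^{\,(d)})\cap R(\vec{\xi}^{\,(d)})$ (both closed) correctly identify the pasts of the two components, giving singularity, regularity and orthogonality. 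The point you flag as a worry --- that $S(\vec{\xi}^{\,(d)})$ reduces the shift --- is in fact unproblematic and goes through verbatim in the infinite-dimensional setting.

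The genuine gap is in the uniqueness step, exactly where you suspected. The inclusion $S(\vec{\xi}^{\,(d)})\subseteq H(\vec a)$ is fine, but the reverse inclusion $H(\vec a)\subseteq S(\vec{\xi}^{\,(d)})$ is not established by the sentence about averaging annihilating the regular contribution: the elements $b_{kl}$, $l\leq q$, all have the same norm, so being confined to the shrinking subspaces $H^{q}(\vec b)$ does not make any particular contribution small, and no averaging identity is actually exhibited. What is needed is $a_{kj}\in H^{q}(\vec{\xi}^{\,(d)})$ for every $q$ (equivalently $R(\vec{\xi}^{\,(d)})\perp H(\vec a)$), and this does not follow from $a_{kj}\in H^{q}(\vec a)\subseteq\overline{H^{q}(\vec{\xi}^{\,(d)})+H^{q}(\vec b)}$ alone, since the two summands need not be at positive angle. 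The standard repair is to build subordination into the uniqueness class, i.e.\ to require $a_{kj},b_{kj}\in H^{j}(\vec{\xi}^{\,(d)})$ --- a condition your constructed decomposition automatically satisfies, because $P_{S}\xi^{(d)}_{kj}\in S(\vec{\xi}^{\,(d)})\subseteq H^{j}(\vec{\xi}^{\,(d)})$ and $P_{R}\xi^{(d)}_{kj}=\xi^{(d)}_{kj}-P_{S}\xi^{(d)}_{kj}\in H^{j}(\vec{\xi}^{\,(d)})$. Under that hypothesis singularity gives $H(\vec a)=H^{q}(\vec a)\subseteq H^{q}(\vec{\xi}^{\,(d)})$ for all $q$, hence $H(\vec a)=S(\vec{\xi}^{\,(d)})$, and since $a_{kj}$ is the orthogonal projection of $\xi^{(d)}_{kj}$ onto $H(\vec a)$ one concludes $a_{kj}=\xi^{(d)}_{S,kj}$. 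Either add that hypothesis, as the classical formulations do, or supply an actual argument for $R(\vec{\xi}^{\,(d)})\perp H(\vec a)$; as written the step does not close.
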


Consider an innovation sequence  ${\vec\varepsilon(u)=\{\varepsilon_m(u)\}_{m=1}^M, u \in\mathbb Z}$ for a regular stationary
increment, namely, the sequence of uncorrelated random
variables such that $\mathsf{E} \varepsilon_m(u)\overline{\varepsilon}_j(v)=\delta_{mj}\delta_{uv}$,   $\mathsf{E} |\varepsilon_m(u)|^2=1, m,j=1,\dots,M; u \in\mathbb Z$, and $H^{r}(\vec\xi^{(d)} )=H^{r}(\vec\varepsilon)$ holds true for all $r \in \mathbb Z$, where  $H^r(\vec\varepsilon)$ is
the Hilbert space generated by elements $ \{ \varepsilon_m(u):m=1,\dots,M; u\leq r\}$,
 $\delta_{mj}$ and $\delta_{uv}$ are Kronecker symbols.

\begin{thm}\label{thm 4_cont}
A   stationary   increment sequence
$\vec\xi^{(d)}_{j}$ is regular if and only if there exists an
innovation sequence ${\vec\varepsilon(u)=\{\varepsilon_m(u)\}_{m=1}^M, u \in\mathbb Z}$
and a sequence of matrix-valued
functions $\phi^{(d)}(l,\tau) =\{\phi^{(d)}_{km}(l,\tau) \}_{k=\overline{1,\infty}}^{m=\overline{1,M}}$, $l\geq0$, such that
\begin{equation}\label{odnostRuhSer_cont}
\sum_{l=0}^{\infty}
\sum_{k=1}^{\infty}
\sum_{m=1}^{M}
|\phi^{(d)}_{km}(l,\tau)|^2
<\infty,\quad
\xi^{(d)}_{kj}=
\sum_{l=0}^{\infty}\sum_{m=0}^{M}\phi^{(d)}_{km}(l,\tau)\vec\varepsilon_m(j-l).
\end{equation}
Representation (\ref{odnostRuhSer_cont}) is called the canonical
moving average representation of the generated stationary   increment
sequence $\vec\xi^{\,(d)}_{j}$.
\end{thm}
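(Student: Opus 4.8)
The plan is to read Representation (\ref{odnostRuhSer_cont}) as nothing more than the coordinate expansion of $\vec\xi^{(d)}_j$ in the orthonormal system generated by its one-step prediction errors, so that the theorem becomes a Wold-type dichotomy for the shift on $H(\vec\xi^{(d)})$. I would prove the two implications separately: the ``if'' part (a moving-average representation forces regularity) is the easy direction and rests only on the orthogonality of the innovations, while the ``only if'' part (regularity produces the representation) is the substantive direction and requires constructing the innovation sequence and controlling an infinite-dimensional Fourier expansion.

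For sufficiency I would argue as follows. Suppose (\ref{odnostRuhSer_cont}) holds for some innovation sequence $\vec\varepsilon$. Each $\xi^{(d)}_{kj}$ is then a norm limit of linear combinations of $\{\varepsilon_m(u):u\le j\}$, so $H^q(\vec\xi^{(d)})\subseteq H^q(\vec\varepsilon)$; combined with the defining property $H^r(\vec\xi^{(d)})=H^r(\vec\varepsilon)$ of an innovation sequence these subspaces coincide for every $q$. Since $\mt E\varepsilon_m(u)\overline{\varepsilon_n(v)}=\delta_{mn}\delta_{uv}$, any vector lying in every $H^q(\vec\varepsilon)$ is orthogonal to all $\varepsilon_m(u)$ and hence vanishes, so $\bigcap_q H^q(\vec\varepsilon)=\{0\}$. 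Therefore $S(\vec\xi^{(d)})=\bigcap_q H^q(\vec\xi^{(d)})=\{0\}$, which is exactly regularity.

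For necessity I would construct the innovations from the prediction-error spaces. Assuming the sequence regular (so the singular part in the preceding decomposition (\ref{rozklad_cont}) is absent), set
\[
N_j=H^{j}(\vec\xi^{(d)})\ominus H^{j-1}(\vec\xi^{(d)}),
\]
and pick an orthonormal basis $\varepsilon_m(j)$, $m=1,\dots,M$, of $N_j$. The time shift $\xi^{(d)}_{kj}\mapsto\xi^{(d)}_{k,j+1}$ extends to a unitary operator on $H(\vec\xi^{(d)})$ carrying $H^{j}(\vec\xi^{(d)})$ onto $H^{j+1}(\vec\xi^{(d)})$, hence $N_j$ isometrically onto $N_{j+1}$; this makes $\dim N_j=M$ constant and lets me choose the bases as shifts of a single $M$-tuple, giving an innovation sequence with $H^r(\vec\xi^{(d)})=H^r(\vec\varepsilon)$. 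The key structural fact I would then invoke is that regularity, $S(\vec\xi^{(d)})=\{0\}$, is equivalent to the orthogonal decomposition $H^{n}(\vec\xi^{(d)})=\bigoplus_{j\le n}N_j$ for every $n$. Expanding $\xi^{(d)}_{kj}\in H^{j}(\vec\xi^{(d)})$ in this basis yields (\ref{odnostRuhSer_cont}) with coefficients $\phi^{(d)}_{km}(l,\tau)=\mt E[\xi^{(d)}_{kj}\overline{\varepsilon_m(j-l)}]$, which are independent of $j$ by stationarity.

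The hard part will be the summability and convergence in the infinite-dimensional setting. For each fixed coordinate $k$, Bessel's equality in the regular case gives $\sum_{l\ge0}\sum_{m=1}^M|\phi^{(d)}_{km}(l,\tau)|^2=\mt E|\xi^{(d)}_{kj}|^2$; the genuine obstacle is to sum this over all $k$ and to justify the $H$-convergence of the resulting double series together with the interchange of the $k$- and $(l,m)$-summations. I would control this through the kernel-operator bound (\ref{P1}), namely $\sum_{k=1}^\infty \mt E|\xi^{(d)}_{kj}|^2=\|\xi^{(d)}_j\|_H^2\le P_{\xi^{(d)}}<\infty$, which both forces the finiteness of $\sum_{l}\sum_{k}\sum_{m}|\phi^{(d)}_{km}(l,\tau)|^2$ claimed in (\ref{odnostRuhSer_cont}) and, via dominated convergence for the trace norm, legitimises the rearrangement. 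Establishing that the nuclear (kernel) structure of $R_{\xi^{(d)}}$ survives the passage to the innovation coordinates is precisely the point where the infinite dimension of the sequence, rather than the classical finite-dimensional Wold argument, must be handled with care.
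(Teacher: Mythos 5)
The paper does not actually prove Theorem \ref{thm 4_cont}: it is stated as a known structural result (the Wold--Karhunen-type decomposition for regular stationary increment sequences with values in a Hilbert space), imported from the literature cited around it (\cite{Kallianpur}, \cite{Moklyachuk:1981}, \cite{Luz_Mokl_book}), and the Appendix only proves Theorem \ref{thm1_f_n-n_c}, Lemma \ref{lema_fact_3_f_n-c_c} and Theorem \ref{thm3_f_n-n_c_fact}. So there is no in-paper argument to compare yours against; what you have written is the standard Wold-decomposition proof, and in outline it is sound: sufficiency via $\bigcap_q H^q(\vec\varepsilon)=\{0\}$, necessity via the innovation spaces $N_j=H^j(\vec\xi^{(d)})\ominus H^{j-1}(\vec\xi^{(d)})$, the shift-invariance argument for constant multiplicity, the orthogonal decomposition $H^n(\vec\xi^{(d)})=\bigoplus_{j\le n}N_j$ as the equivalent form of regularity, and the coefficient identification $\phi^{(d)}_{km}(l,\tau)=\mt E[\xi^{(d)}_{kj}\overline{\varepsilon_m(j-l)}]$ with summability supplied by the kernel bound (\ref{P1}).

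Two small points to tighten. First, your sufficiency argument is mildly circular as phrased: the paper's definition of an innovation sequence already postulates $H^r(\vec\xi^{(d)})=H^r(\vec\varepsilon)$, so the inclusion $H^q(\vec\xi^{(d)})\subseteq H^q(\vec\varepsilon)$ that you extract from the moving-average representation is not doing any work --- the whole content of that direction is the triviality of $\bigcap_q H^q(\vec\varepsilon)$, which you do establish correctly. Second, the rearrangement step at the end (interchanging the $k$- and $(l,m)$-summations and passing the nuclear structure of $R_{\xi^{(d)}}$ to innovation coordinates) is announced rather than carried out; the Bessel identity per coordinate plus the uniform bound $\sum_k\mt E|\xi^{(d)}_{kj}|^2\le P_{\xi^{(d)}}$ is indeed enough, by Tonelli for nonnegative terms, so you should say that rather than invoke ``dominated convergence for the trace norm.'' You should also note explicitly that the multiplicity $M=\dim N_j$ need not be finite, which is consistent with the paper's indexing $m=\overline{1,M}$.
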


The spectral function $F(\lambda)$ of  a stationary    increment sequence $\vec\xi^{\,(d)}_{j}$
which admits   canonical representation
 $(\ref{odnostRuhSer_cont})$  has the spectral density  $f(\lambda)=\{f_{ij}(\lambda)\}_{i,j=1}^\infty$ admitting the canonical
factorization
\begin{equation}\label{SpectrRozclad_f_cont}
f(\lambda)=
\Phi(e^{-i\lambda})\Phi^*(e^{-i\lambda}),
\end{equation}
where the function
$\Phi(z)=\sum_{k=0}^{\infty}\phi(k)z^k$ has
analytic in the unit circle $\{z:|z|\leq1\}$
components
$\Phi_{ij}(z)=\sum_{k=0}^{\infty}\phi_{ij}(k)z^k; i=1,\dots,\infty; j=1,\dots,M$. Based on moving average  representation
$(\ref{odnostRuhSer_cont})$ define
\[\Phi_{\tau}(z)=\sum_{k=0}^{\infty} \phi^{(d)}(k,\tau)z^k=\sum_{k=0}^{\infty}\phi_{\tau}(k)z^k.\]
 Then the following factorization holds true:
\be
 \frac{|1-e^{i\lambda\tau}|^{2d}}{\lambda^{2d}}f(\lambda)= \Phi_{\tau}(e^{-i\lambda})
\Phi^*_{\tau}(e^{-i\lambda}),\quad \Phi_{\tau}(e^{-i\lambda})=\sum_{k=0}^{\infty}\phi_{\tau}(k)e^{-i\lambda k}.
 \label{dd_cont}
\ee

\section{Hilbert space projection method of filtering}\label{classical_filtering_f_n-n_c}

Let the uncorrelated increment processes $\{\xi^{(d)}(t,\tau T):t\in\mathbb R\}$ and $\{\eta^{(d)}(t,\tau T):t\in\mathbb R\}$ generate
 infinite dimensional vector stationary increment sequences $\{\vec{\xi}^{\,(d)}_j,j\in\mr Z\}$ and $\{\vec{\eta}^{\,(d)}_j,j\in\mr Z\}$ (\ref{generted_incr_seq}) with
 the spectral density matrices $f(\lambda)=\{f_{ij}(\lambda)\}_{i,j=1}^{\infty}$ and $g(\lambda)=\{g_{ij}(\lambda)\}_{i,j=1}^{\infty}$.

 Assume that the mean
values of the   sequences $\vec{\xi}^{\,(d)}_j$ and
  $\vec{\eta}^{\,(d)}_j$ equal to 0, and consider the increment step $\tau>0$.

By  the classical \textbf{filtering problem} we understand the problem of the mean square optimal linear  estimation  of the functional
\[ A{\xi}=\int_{0}^{\infty}a(t)\xi(-t)dt\]
which depends on the unknown values of the stochastic process $\xi(t)$ with periodically correlated $d$th increments.
Estimates are based on observations of the process $\zeta(t)=\xi(t)+\eta(t)$  at points $t\leq 0$.

To apply the Hilbert pace projection method of estimation, we represent the functional $A\xi$ in terms of the generated increment sequence $\vec{\xi}^{\,(d)}_j$.
\begin{lema}\label{lema predst A_f_n-n_c}
Any linear functional
\[
    A\xi=\int_{0}^{\infty}a(t)\xi(-t)dt\]
allows the representation
\[
    A\xi=B\xi,\]
where
\[
    B\xi=\int_{0}^{\infty}b^{\tau}(t)\xi^{(d)}(-t,\tau T)dt,\]
and
\begin{align}
    \label{koef b_cont}b^{\tau}(t)=\sum_{l=0}^{\ld[\frac{t}{\tau T}\rd]}a(t-\tau T l)d(l)=D^{\tau T}\me a(t) ,\,\,t\geq0,
\end{align}
$[x]$ denotes the  integer part of  $x$,  coefficients
  $\{d(l):l\geq0\}$ are determined by the relation
\[\sum_{l=0}^{\infty}d (l)x^l=\bigg(\sum_{j=0}^{\infty}x^{
j}\bigg)^d,\]
$D^{\tau T}$ is the linear transformation acting on an arbitrary function $x(t)$, $t\geq0$, as follows:
\[
    D^{\tau T}\me x(t)=\sum_{l=0}^{\ld[\frac{t}{\tau T}\rd]}x(t-\tau T l )d(l).\]
\end{lema}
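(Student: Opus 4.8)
The plan is to reduce the claim to the single operator identity $(1-B_{\tau T})^d b^\tau = a$ on $[0,\infty)$ and then to transfer the increment operator off $\xi$ and onto $b^\tau$ inside the integral by a change of variables. Throughout I write $e(m)=(-1)^m\binom{d}{m}$ for the coefficients of $(1-x)^d$, so that by (\ref{oznachPryrostu_cont}) one has $\xi^{(d)}(-t,\tau T)=\sum_{m=0}^d e(m)\,\xi(-t-m\tau T)$.

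First I would substitute this expansion into $B\xi$ and pull out the (finite) sum over $m$, giving $B\xi=\sum_{m=0}^d e(m)\int_0^\infty b^\tau(t)\,\xi(-t-m\tau T)\,dt$. In the $m$-th integral I substitute $s=t+m\tau T$ and use the convention $b^\tau(u)=0$ for $u<0$ — which is consistent with the empty floor-sum $\sum_{l=0}^{[u/(\tau T)]}$ defining $D^{\tau T}$ when $u<0$ — to restore the lower limit $0$. Recombining the integrals into one yields $B\xi=\int_0^\infty\bigl(\sum_{m=0}^d e(m)\,b^\tau(s-m\tau T)\bigr)\xi(-s)\,ds$, in which the bracket is exactly $(1-B_{\tau T})^d b^\tau$ evaluated at $s$.

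It then remains to verify the key identity $\sum_{m=0}^d e(m)\,b^\tau(t-m\tau T)=a(t)$ for every $t\geq0$. Here I would insert $b^\tau=D^{\tau T}\mathbf a$, use that $[(t-m\tau T)/(\tau T)]+m=[t/(\tau T)]$ for integer $m$ to rewrite each inner sum with a common running index $n=l+m$, and then interchange the two finite sums. This reduces the bracket to $\sum_{n=0}^{[t/(\tau T)]}a(t-\tau T n)\bigl(\sum_{m=0}^{n}e(m)\,d(n-m)\bigr)$. The inner convolution is precisely the coefficient of $x^n$ in $(1-x)^d\cdot\bigl(\sum_j x^j\bigr)^d=(1-x)^d(1-x)^{-d}=1$, hence it equals $\delta_{n,0}$; only the term $n=0$ survives and the bracket collapses to $a(t)$. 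Substituting back gives $B\xi=\int_0^\infty a(s)\,\xi(-s)\,ds=A\xi$.

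The computation involves no genuine convergence difficulty, since every sum over $m$ and over $n$ is finite for each fixed $s$ thanks to the floor bounds built into $D^{\tau T}$, so the interchange of summation and the pulling of sums through the integral are trivially legitimate; the only substantive ingredient is the reciprocity of the generating functions of $\{d(l)\}$ and $\{e(m)\}$. Consequently the main obstacle is purely bookkeeping: keeping the floor-function index ranges and the zero-extension of $b^\tau$ mutually consistent through the change of variables and the swap of summation order, so that the convolution identity can be invoked cleanly.
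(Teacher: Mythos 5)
Your proof is correct, but it runs in the opposite direction from the paper's. The paper starts from $A\xi$ and formally inverts the increment operator, writing $\xi(-t)=\sum_{j=0}^{\infty}d(j)\,\xi^{(d)}(-t-\tau Tj,\tau T)$, substitutes this infinite series into the integral, and reads off $b^{\tau}$ after interchanging the (infinite) sum with the integral; the whole argument is two lines and is explicitly labelled a ``formal equality''. You instead start from $B\xi$, expand $\xi^{(d)}(-t,\tau T)$ via its finite binomial definition, shift variables, and verify the single identity $(1-B_{\tau T})^{d}b^{\tau}=a$ through the convolution $\sum_{m=0}^{n}(-1)^{m}\binom{d}{m}d(n-m)=\delta_{n,0}$, which is the coefficient identity of $(1-x)^{d}(1-x)^{-d}=1$. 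The underlying algebraic fact --- reciprocity of the generating functions of $\{d(l)\}$ and of the binomial coefficients --- is the same in both arguments, but your version buys genuine rigour at no extra cost: every sum you manipulate is finite for each fixed $t$ (the binomial sum has $d+1$ terms and the floor-bounded sums terminate), so no interchange of an infinite series with an integral needs justification, whereas the paper's inversion of $(1-B_{\tau T})^{d}$ is only formal and its term-by-term rearrangement is left unjustified. Your bookkeeping is also sound: the zero-extension $b^{\tau}(u)=0$ for $u<0$ is consistent with the empty floor-sum, and the index shift $\left[(t-m\tau T)/(\tau T)\right]=\left[t/(\tau T)\right]-m$ for integer $m$ is what makes the double sum collapse correctly.
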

\begin{proof}
From the definition of the increment process  obtain the formal equality
\be\label{obernene oznach prir}
 \xi(-t)=\frac{1}{(1-B_{\tau T})^d}\xi^{(d)}(-t,\tau T)
 =\sum_{j=0}^{\infty}d(j)\xi^{(d)}(-t-\tau Tj,\tau T).\ee
Relation (\ref{obernene oznach prir}) implies that
\[
 \int_0^{\infty}a(t)\xi(-t)dt=
 \int_0^{\infty}\xi^{(d)}(-t,\tau T)\sum_{j=0}^{\ld[\frac{t}{\tau T}\rd]}a(t-\tau Tj)d(j)dt\]
 which proves the statement of the lemma.
 \end{proof}

The functional $A\xi$ allows a representation  in terms of the increments sequence   $\vec\xi^{\,(d)}_j=(\xi^{(d)}_{kj},k=1,2,\dots)^{\top}$, $j\in\mr Z$, which is described in the following lemma.
\begin{lema}
   \label{lema predst A_f_n-n_c}
The linear functional $A\xi$ allows a representation
\[
 A\xi=B\xi=
 \sum_{j=0}^{\infty}
{(\vec{b}^{\tau}_j)}^{\top}\vec{\xi}^{\,(d)}_{-j}=:B\vec\xi,\]
where the infinite dimension vectors $\vec{\xi}^{\,(d)}_j$ and $\vec{b}^{\tau}_j$ are defined as follows:
\begin{align*}
\vec{\xi}^{\,(d)}_j&=(\xi^{(d)}_{kj},k=1,2,\dots)^{\top},
\\
\vec{b}^{\tau}_j&=(b^{\tau}_{kj},k=1,2,\dots)^{\top}=
(b^{\tau}_{1j},b^{\tau}_{3j},b^{\tau}_{2j},\dots,b^{\tau}_{2k+1,j},b^{\tau}_{2k,j},\dots)^{\top}.
\end{align*}
Here
\begin{align*}
 b^{\tau}_{kj}&=\langle b^{\tau}_j,\widetilde{e}_k\rangle =
\frac{1}{\sqrt{T}} \int_0^T b^{\tau}_j(v)e^{-2\pi
i\{(-1)^k\left[\frac{k}{2}\right]\}v/T}dv,
\\
   b^{\tau}_j(u)&=\sum_{l=0}^{j}a(u+jT-\tau T l)d_{\tau}(l)=D^{\tau} a_j(u),\, u\in [0,T), \, j=0,1,\ldots,
\end{align*}
and
\begin{align*}
    b^{\tau}_{k,j}&=\sum_{l=0}^{j}a_{k,j-\tau l}d_{\tau}(l),\, j=0,1,\ldots,\, k=1,2\ldots,
\\
 a_{kj}&=\langle a_j,\widetilde{e}_k\rangle =
\frac{1}{\sqrt{T}} \int_0^T a_j(v)e^{-2\pi
i\{(-1)^k\left[\frac{k}{2}\right]\}v/T}dv.
\end{align*}
\be \label{koef b_A_f_n-n_c}
\vec{b}^{\tau}_j=\sum_{m=0}^{j}\mt{diag}_{\infty}(d_{\tau}(j-m))\vec{a}_m=(D^{\tau}{\me a})_j,  \, j=0,1,2,\dots,
\ee
where
\[
\me a=((\vec{a}_0)^{\top},(\vec{a}_1)^{\top},(\vec{a}_2)^{\top}, \ldots)^{\top},
\]
\[
\vec{a}_j=(a_{kj},k=1,2,\dots)^{\top}=
(a_{1j},a_{3j},a_{2j},\dots,a_{2k+1,j},a_{2k,j},\dots)^{\top},
\]
the coefficients $\{d_{\tau}(k):k\geq0\}$ are determined by the relationship
\[
 \sum_{k=0}^{\infty}d_{\tau}(k)x^k=\bigg(\sum_{j=0}^{\infty}x^{\tau j}\bigg)^d,\]
$D^{\tau}$ is a linear transformation  determined by a matrix with infinite dimension matrix entries
 $D^{\tau}(k,j), k,j=0,1,2,\dots$ such that $D^{\tau}(k,j)=\mt{diag}_{\infty}(d_{\tau}(k-j))$ if $0\leq j\leq k $ and $D^{\tau}(k,j)=\mt{diag}_{\infty}(0)$ for $0\leq k<j$; $\mt{diag}_{\infty}(x)$ denotes an infinite dimension diagonal matrix with the entry $x$ on its diagonal.
 \end{lema}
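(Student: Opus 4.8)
The plan is to start from the integral representation already furnished by the preceding lemma, $A\xi=B\xi=\int_0^\infty b^\tau(t)\,\xi^{(d)}(-t,\tau T)\,dt$, and to convert it into the announced series by periodizing the time axis. First I would split the half-line into period-length blocks, $\int_0^\infty=\sum_{j=0}^\infty\int_{jT}^{(j+1)T}$, and on the $j$-th block substitute $t=u+jT$ with $u\in[0,T)$. This turns the functional into $\sum_{j=0}^\infty\int_0^T b^\tau_j(u)\,\xi^{(d)}(-(u+jT),\tau T)\,du$, where $b^\tau_j(u):=b^\tau(u+jT)$ is the restriction of the coefficient function to the $j$-th period. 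Each integrand is a product of two functions of $u\in[0,T)$ lying in the span of the orthonormal system $\{\widetilde e_k\}$ of Section \ref{section_PCI_c}, and the increment factor, read as a function of $u$, is the negatively indexed member of the generated sequence (\ref{xj}).

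Next I would expand both factors in $\{\widetilde e_k\}$, with coefficients $b^\tau_{kj}=\langle b^\tau_j,\widetilde e_k\rangle$ as in the statement and the analogous coefficients of the increment factor, and use orthonormality to collapse each block integral to a sum over Fourier coefficients. The one point that needs care is that a block integral $\int_0^T fh\,du$ is a bilinear functional, not the Hermitian inner product, so the relevant selection rule is $\int_0^T\widetilde e_k\widetilde e_n\,du\neq 0$, i.e. $(-1)^k[k/2]=-(-1)^n[n/2]$; it pairs each index with the one of opposite frequency rather than with itself. Writing the pairing as a coordinate dot product therefore forces the transposition $2k\leftrightarrow 2k+1$ that is built into the orderings of $\vec b^\tau_j=(b^\tau_{1j},b^\tau_{3j},b^\tau_{2j},\dots)^\top$ and of $\vec a_j$. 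After this identification each block integral equals $(\vec b^\tau_j)^\top\vec\xi^{(d)}_{-j}$, and summing over $j$ yields $B\xi=\sum_{j=0}^\infty(\vec b^\tau_j)^\top\vec\xi^{(d)}_{-j}=:B\vec\xi$.

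Finally, to obtain the convolution formula (\ref{koef b_A_f_n-n_c}) I would substitute the explicit form of $b^\tau_j$ furnished by the previous lemma — in which $b^\tau$ is built from $a$ by the discrete action of the coefficients $d(l)$ with step $\tau T$ — and take Fourier coefficients term by term. Since shifting the continuous argument by a whole number of periods shifts the sequence index of $a$, the $l$-th term contributes a multiple of $\vec a_{j-\tau l}$; collecting terms gives the lower-triangular convolution $\vec b^\tau_j=\sum_{m=0}^j\mt{diag}_\infty(d_\tau(j-m))\vec a_m=(D^\tau\me a)_j$ with $D^\tau$ as defined. The analytic steps — interchanging the integral with the finite period sum and with the basis expansion, and the square-summability guaranteeing convergence in $H$ — are routine. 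I expect the genuine difficulty to lie entirely in the index bookkeeping: aligning the reflection of the negative time arguments $-(u+jT)$ and the bilinear (non-conjugate) pairing with the prescribed coordinate transposition and the correct negative index of $\vec\xi^{(d)}$, and matching the continuous-time coefficients $d(l)$ with step $\tau T$ to the discrete coefficients $d_\tau$ that define the triangular operator $D^\tau$.
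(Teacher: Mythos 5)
Your proposal differs from the paper only in the sense that the paper gives no argument at all: its entire proof reads that the representation ``comes from the corresponding lemma in \cite{Luz_Mokl_extra_cont_PCI}''. What you reconstruct is the computation that this citation hides, and the mechanism you identify is the right one: the block decomposition $\int_0^\infty=\sum_{j\ge0}\int_{jT}^{(j+1)T}$, the substitution $t=u+jT$, and above all the observation that the block pairing $\int_0^T b^{\tau}_j(u)\,\xi^{(d)}_{-j}(u)\,du$ is bilinear rather than sesquilinear, so the selection rule is $(-1)^k\left[\frac{k}{2}\right]+(-1)^n\left[\frac{n}{2}\right]=0$, which couples index $2k$ with $2k+1$; this is exactly what forces the permuted ordering $(b^{\tau}_{1j},b^{\tau}_{3j},b^{\tau}_{2j},\dots)$ of $\vec b^{\tau}_j$ against the natural ordering of $\vec\xi^{\,(d)}_{-j}$, a point the paper never makes explicit. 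Two pieces of bookkeeping that you flag but leave open would have to be carried through for this to stand as a complete proof: (i) the reflection --- as a function of $u\in(0,T)$ one has $\xi^{(d)}(-(u+jT),\tau T)=\xi^{(d)}_{-(j+1)}(T-u)$, not $\xi^{(d)}_{-j}(u)$, so identifying the $j$th block with $(\vec b^{\tau}_j)^{\top}\vec\xi^{\,(d)}_{-j}$ requires a compensating change of variable $v=T-u$ together with a shift of the block index, which also affects where the sum over $j$ starts; and (ii) the passage from the continuous-time coefficients $d(l)$ acting with step $\tau T$ to the discrete coefficients $d_{\tau}(l)$, supported on multiples of $\tau$, in the convolution (\ref{koef b_A_f_n-n_c}) --- the paper's own formula $b^{\tau}_{k,j}=\sum_{l=0}^{j}a_{k,j-\tau l}d_{\tau}(l)$ mixes the two conventions, and a careful write-up must fix one. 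Neither point undermines your approach; they are precisely where the lemma's index conventions need to be pinned down, and your sketch correctly locates all of the real difficulty there.
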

\begin{proof}
The representation of the functional $A\vec{\xi}$ comes from the corresponding lemma in \cite{Luz_Mokl_extra_cont_PCI}.
\end{proof}

Suppose that the vectors $\vec{b}^{\tau}_j$, $j\geq0$, generated by the function $a(t)$, $t\geq0$, satisfy the  conditions
\begin{equation} \label{coeff_b_f_n-n_c}
 \sum_{j=0}^\infty\|
 \vec{b}^{\tau}_j\|<\infty, \quad
 \sum_{j=0}^\infty(j+1)\|
 \vec{b}^{\tau}_j\|<\infty, \quad
  \|\vec{b}^{\tau}_j\|^2=\sum_{k=1}^\infty |{b}^{\tau}_{kj}|^2.
\end{equation}
 Under such conditions  the functional $B\vec{\xi}$ belongs to the Hilbert space $H=L_2(\Omega,\mathcal{F}, P)$.

 \begin{remark}
  Restrictions (\ref{coeff_b_f_n-n_c}) on coefficients $\vec b^{\tau}_j$, $j\geq0$, are   strong. Let us show that they are not satisfied if $\vec b^{\tau}_j=\vec 0$ for $j\geq j_0$. Consider the simple case when $\|\vec b^{\tau}_0\|^2=\|\vec a_0\|^2=a$ and $\vec a_j=\vec 0$ for $j\geq 1$. Then the vectors $\vec{b}_j=\mt{diag}_{\infty}(d_{\tau}(j))\vec{a}_0$, $k\geq0$ and conditions (\ref{coeff_b_f_n-n_c}) are not satisfied.
  \end{remark}
   \begin{remark}
   An example when conditions (\ref{coeff_b_f_n-n_c}) are satisfied for the functional
   $$A_{NT}\xi=\int_{0}^{(N+1)T}a(t)\xi(-k)dt,$$
    is a class of functionals allowing representation as a functional
\[
 A_{NT}\xi=B_M\vec\xi=\sum_{j=0}^M(\vec b^{\tau}_j)^{\top}\vec\xi^{\,(d)}_{-j}\]
 which depends on the increments $\vec\xi^{\,(d)}_{-j}$ generated by the process, namely, $A_{NT}\xi=B_M\vec\xi$. In terms of the vectors $\vec a_j$, we have the following condition: there exist vectors $\{\vec b^{\tau}_j:j=0,1,2,\ldots,M\}$ such that
\[
 \vec a_j=\sum_{l=\max\ld\{0;\ld\lceil\frac{j-M}{\tau}\rd\rceil\rd\}}^{\min\ld\{d;\ld[\frac{j}{\tau}\rd]\rd\}}
 (-1)^l{n \choose l}\vec b^{\tau}_{j-\tau l}\]
 for $j=0,1,2,\ldots,N$, $N=M+\tau d$. By $\lceil x\rceil$ we denote the least integer greater than or equal to  $x$.
\end{remark}

Denote by
$\Delta(f,g;\widehat{A}{\xi})=\mt E |A{\xi}-\widehat{A}{\xi}|^2$ and
$\Delta(f,g;\widehat{B}\vec\xi):=\mt E |B\vec\xi-\widehat{B}\vec\xi|^2$
 the mean square errors (MSE) of the optimal estimates $\widehat{A}\xi$ and $\widehat{B}\vec\xi$ of the functionals $A\xi$ and $B\vec\xi$.
 Since $A\xi=B\vec\xi$, we have the following relations:
\be\label{main
formula}
 \widehat{A}\xi=\widehat{B}\vec\xi,\ee
and
\[
 \Delta(f,g;\widehat{A}\xi)=\mt E |A\xi-\widehat{A}\xi|^2=
 \mt E|B\vec\xi-\widehat{B}\vec\xi|^2=\Delta(f,g;\widehat{B}\vec\xi).\]

Let the spectral densities $f(\lambda)$ and $g(\lambda)$ satisfy the minimality condition
\be
 \ip \dfrac{\lambda^{2d}}{|1-e^{i\lambda\tau}|^{2d}}(f(\lambda)+g(\lambda))^{-1}
 d\lambda<\infty.\label{umova11_f_n-n_c}\ee
The estimate of the functional $ B\vec\xi$ can be found applying the method of orthogonal projection in the Hilbert space.
Define a closed linear subspace
\[H^{0}(\vec\xi^{\,(d)}+\vec\eta^{\,(d)})
=\overline{span}\{\xi^{(d)}_k(j,\tau)+\eta^{(d)}_k(j,\tau):k=1,\dots,\infty;\,j\leq 0\}\]
of $H=L_2(\Omega,\mathcal{F},\mt P)$ generated by the observations $\{\zeta(t):t\leq0\}$, and
 a closed linear subspace
\[
L_2^{0}(f+g)=\overline{span}\{
 e^{i\lambda j}(1-e^{-i\lambda \tau})^d\frac{1}{(i\lambda)^d}\vec\delta_{k}:
 \,
  k=1,\dots,\infty;\,j\leq 0\}
 \]
of the Hilbert space
$L_2(f+g)$  of vector-valued functions endowed by the inner product $\langle g_1;g_2\rangle=\ip (g_1(\lambda))^{\top}(f(\lambda)+g(\lambda))\overline{g_2(\lambda)}d\lambda$. Here   $\vec\delta_k=\{\delta_{kl}\}_{l=1}^{\infty}$, $\delta_{kl}$ are Kronecker symbols. Note, that $f(\lambda)+g(\lambda)$   is a spectral density matrix of the stationary increment sequence $\vec{\xi}^{\,(d)}_j+\vec{\eta}^{\,(d)}_j$.

\begin{zau}
Representation
 (\ref{SpectrPred_incr_xi_c})
yields a  map between a function
\[
e^{i\lambda j}(1-e^{-i\lambda\tau})^d(i\lambda)^{-d}\vec\delta_k\]
 from the space
$L_2^{0}(f+g)$
and an increment
$\xi^{(d)}_k(j,\tau)+\eta^{(d)}_k(j,\tau)$
from the space
$H^{0}(\vec\xi^{\,(d)}+\vec\eta^{\,(d)})$.
\end{zau}

 The linear estimates $\widehat{A}\xi$ and $\widehat{B} \vec\xi$ have the following representations:
\be \label{otsinka A_f_n-n_c}
 \widehat{A}\xi=\widehat{B}\vec \xi=\ip
 (\vec h_{\tau}(\lambda))^{\top}d\vec Z_{\xi^{(d)}+\eta^{(d)}}(\lambda), \ee
where $\vec h_{\tau}(\lambda)$ is a spectral characteristic
of the optimal estimate $\widehat{B}\vec \xi$.  The
 estimate $\widehat{B} \xi$ is a projection of the element $B\xi$ of the Hilbert space $H=L_2(\Omega,\mathcal{F}, P)$ on the
subspace $H^{0}(\vec\xi^{\,(d)}+\vec\eta^{\,(d)})$:
\[
\widehat{A}\xi=\widehat{B}\vec\xi=\mt {Proj}_{H^{0}(\vec\xi^{\,(d)}+\vec\eta^{\,(d)})}B\vec\xi.
\]

Define the following Fourier coefficients for all $l,j\geq0$:
\begin{align*}
  R_{l,j}&=\frac{1}{2\pi}\ip
 e^{-i\lambda(l+j)}(f(\lambda)(f(\lambda)+g(\lambda))^{-1})^{\top}d\lambda,
 \\
  P_{l,j}^{\tau}&=\frac{1}{2\pi}\ip e^{-i\lambda (l-j)}
 \dfrac{\lambda^{2d}}{|1-e^{i\lambda\tau}|^{2d}}\ld((f(\lambda)+g(\lambda)\rd)^{-1})^{\top}
 d\lambda,
 \\
  Q_{l,j}^{\tau}&=\frac{1}{2\pi}\ip
 e^{-i\lambda (l-j)}\frac{|1-e^{i\lambda\tau}|^{2d}}
 {\lambda^{2d}}(f(\lambda)(f(\lambda)+g(\lambda))^{-1}g(\lambda))^{\top}d\lambda.
  \end{align*}
Define a vector
\[
  \me b^{\tau}  =((\vec b^{\tau}_0)^{\top},(\vec b^{\tau}_1)^{\top},
  (\vec b^{\tau}_2)^{\top},\ldots)^{\top},
\]
and
linear operators $\me P_{\tau}$, $\me R$ and $\me Q_{\tau}$ in the space $\ell_2$ determined by matrices with the infinite dimensional entries
$(\me P_{\tau})_{l,j}=P_{l,j}^{\tau}$,   $(\me
R)_{l,j} =R_{l+1,j}$, $(\me Q_{\tau})_{l,j}=Q_{l,j}^{\tau}$, $l,j\geq0$.

The following theorem provides a solution of the classical filtering problem.

\begin{thm}
  \label{thm1_f_n-n_c}
Consider two uncorrelated
   stochastic processes $\xi(t)$ and $\eta(t)$, $t\in \mr R$, with  periodically stationary  increments, which determine
generated stationary  $d$th increment sequences
$\vec{\xi}^{\,(d)}_j$ and $\vec{\eta}^{\,(d)}_j$ with the spectral density matrices $f(\lambda)=\{f_{kn}(\lambda)\}_{k,n=1}^{\infty}$ and $g(\lambda)=\{g_{kn}(\lambda)\}_{k,n=1}^{\infty}$, respectively.
Let the coefficients $\vec {b}^{\tau}_j$, $j=0,1,\ldots,$ generated by the function $a(t)$, $t\geq 0$, satisfy conditions  (\ref{coeff_b_f_n-n_c}).
 Let    minimality condition
(\ref{umova11_f_n-n_c}) be satisfied.
The optimal linear estimate
$\widehat{A}\xi$ of the functional $A\xi$ which depends on the unknown
values $\xi(t)$  for $t\leq0$  based
on observations of the   process $\xi(t)+\eta(t)$
at points $t\leq 0$ is determined by formula (\ref{otsinka A_f_n-n_c}).
The spectral characteristic $\vec h_{\tau}(\lambda)$ of the optimal estimate
 is calculated by the formula
 \begin{align}
 \notag (\vec h_{\tau}(\lambda))^{\top}&=
 (\vec B_{\tau}(e^{-i\lambda}))^{\top}\frac{(1-e^{-i\lambda\tau})^d}{(i\lambda)^{d}}
 f(\lambda)(f(\lambda)+g(\lambda))^{-1}
 \\
 &\quad-\frac{(-i\lambda)^{d}}{(1-e^{i\lambda\tau})^d}(\vec C_{\tau}(e^{i\lambda}))^{\top}(f(\lambda)+g(\lambda))^{-1},
 \label{spectr A_f_n-n_c}
\end{align}
where
 \begin{align*}
 \vec B_{\tau}(e^{-i\lambda })&=\sum_{j=0}^{\infty}\vec b^{\tau}_je^{-i\lambda j}
 =\sum_{j=0}^{\infty}( D^{\tau} \me a)_je^{-i\lambda j},
\\
 \vec C_{\tau}(e^{i\lambda})&=\sum_{j=0}^{\infty}
 \ld(\me P_{\tau}^{-1}\me RD^{\tau}\me a\rd)_j e^{i\lambda
 (j+1)}.
 \end{align*}
 The MSE of the estimate $\widehat{A}\xi$
is calculated by the formula
\begin{align}
 \notag &\Delta(f,g;\widehat{A}\xi)=\Delta(f,g;\widehat{B}\vec\xi)= \mt E|B\vec\xi-\widehat{B}\vec\xi|^2=
 \\\notag
 &=\frac{1}{2\pi}\ip
 \frac{\lambda^{2d}}
 {|1-e^{i\lambda\tau}|^{2d}}
 \bigg((\vec B_{\tau}(e^{-i\lambda}))^{\top}\frac{|1-e^{i\lambda
 \tau}|^{2d}}{\lambda^{2d}}g(\lambda)+(\vec C_{\tau}(e^{i\lambda}))^{\top}\bigg)
 \\
\notag &\quad\quad\times
 (f(\lambda)+g(\lambda))^{-1} f(\lambda)(f(\lambda)+g(\lambda))^{-1}
 \\
\notag &\quad\quad\times
\bigg(g(\lambda)\overline{\vec B_{\tau}(e^{-i\lambda})}\frac{|1-e^{i\lambda
 \tau}|^{2d}}{\lambda^{2d}}+\overline{\vec C_{\tau}(e^{i\lambda})}\bigg)
  d\lambda
 \\
 \notag
 &\quad+\frac{1}{2\pi}\ip
 \frac{\lambda^{2d}}
 {|1-e^{i\lambda\tau}|^{2d}}
 \bigg((\vec B_{\tau}(e^{-i\lambda}))^{\top}\frac{|1-e^{i\lambda
 \tau}|^{2d}}{\lambda^{2d}}f(\lambda)--(\vec C_{\tau}(e^{i\lambda}))^{\top}\bigg)
 \\
\notag &\quad\quad\times
 (f(\lambda)+g(\lambda))^{-1}g(\lambda)(f(\lambda)+g(\lambda))^{-1}
 \\
\notag &\quad\quad\times
\bigg(f(\lambda)\overline{\vec B_{\tau}(e^{-i\lambda})}\frac{|1-e^{i\lambda
 \tau}|^{2d}}{\lambda^{2d}}-\overline{\vec C_{\tau}(e^{i\lambda})}\bigg)
  d\lambda
 \\
 &=\ld\langle \me R
 D^{\tau}\me a,\me P_{\tau}^{-1}\me R D^{\tau}\me a\rd\rangle+\ld\langle\me Q_{\tau} D^{\tau}\me a,
 D^{\tau}\me a\rd\rangle.\label{poh A_f_n-n_c}\end{align}
\end{thm}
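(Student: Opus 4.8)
The plan is to realise the optimal estimate as an orthogonal projection in $H=L_2(\Omega,\mathcal F,\mt P)$ and to transcribe the projection condition into the spectral domain through the isometry $\xi^{(d)}_k(j,\tau)+\eta^{(d)}_k(j,\tau)\leftrightarrow e^{i\lambda j}(1-e^{-i\lambda\tau})^d(i\lambda)^{-d}\vec\delta_k$ between $H^{0}(\vec\xi^{\,(d)}+\vec\eta^{\,(d)})$ and $L_2^{0}(f+g)$. Write $\gamma(\lambda)=(1-e^{-i\lambda\tau})^{d}(i\lambda)^{-d}$, so that $\overline{\gamma}(\lambda)=(1-e^{i\lambda\tau})^{d}(-i\lambda)^{-d}$ and $|\gamma(\lambda)|^{2}=|1-e^{i\lambda\tau}|^{2d}\lambda^{-2d}$. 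By the spectral representation (\ref{SpectrPred_incr_xi_c}) and Lemma \ref{lema predst A_f_n-n_c}, the functional becomes $B\vec\xi=\ip(\vec B_{\tau}(e^{-i\lambda}))^{\top}\gamma(\lambda)\,d\vec Z_{\xi^{(d)}}(\lambda)$, while $\widehat B\vec\xi=\ip(\vec h_{\tau}(\lambda))^{\top}d\vec Z_{\xi^{(d)}+\eta^{(d)}}(\lambda)$ with $\vec h_{\tau}\in L_2^{0}(f+g)$. Since $\xi$ and $\eta$ are uncorrelated, $d\vec Z_{\xi^{(d)}+\eta^{(d)}}=d\vec Z_{\xi^{(d)}}+d\vec Z_{\eta^{(d)}}$ with orthogonal spectral measures carrying the densities $f$ and $g$; conditions (\ref{coeff_b_f_n-n_c}) guarantee $B\vec\xi\in H$.

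First I would impose the orthogonality condition $\mt E[(B\vec\xi-\widehat B\vec\xi)\overline{(\xi^{(d)}_k(l,\tau)+\eta^{(d)}_k(l,\tau))}]=0$ for all $k\geq1$ and all $l\leq0$. Evaluating the expectation against the two orthogonal spectral measures kills the cross terms and yields the vector identity $\frac{1}{2\pi}\ip[\gamma\,\vec B_{\tau}^{\top}f-\vec h_{\tau}^{\top}(f+g)]\overline{\gamma}\,e^{-i\lambda l}\,d\lambda=\vec 0$ for every $l\leq0$. Thus the row function $[\gamma\,\vec B_{\tau}^{\top}f-\vec h_{\tau}^{\top}(f+g)]\overline{\gamma}$ has vanishing Fourier coefficients at all indices $\leq0$, hence equals a series $\vec C_{\tau}(e^{i\lambda})^{\top}$ supported on strictly positive frequencies. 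Solving for $\vec h_{\tau}^{\top}$ and using $\overline{\gamma}^{-1}=(-i\lambda)^{d}(1-e^{i\lambda\tau})^{-d}$ reproduces formula (\ref{spectr A_f_n-n_c}) exactly, with $\vec C_{\tau}$ still to be identified.

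The coefficients of $\vec C_{\tau}$ are pinned down by the second defining property of the projection, that $\vec h_{\tau}\in L_2^{0}(f+g)$, i.e.\ $\vec h_{\tau}^{\top}=\gamma\,\vec H(e^{i\lambda})^{\top}$ with $\vec H$ supported on frequencies $\leq0$. Dividing the expression for $\vec h_{\tau}^{\top}$ by $\gamma$ and using $\overline{\gamma}^{-1}\gamma^{-1}=\lambda^{2d}|1-e^{i\lambda\tau}|^{-2d}$ gives $\vec H^{\top}=\vec B_{\tau}^{\top}f(f+g)^{-1}-\tfrac{\lambda^{2d}}{|1-e^{i\lambda\tau}|^{2d}}\vec C_{\tau}^{\top}(f+g)^{-1}$; requiring its positive-frequency part to vanish and reading off the coefficient at each index $l+1\geq1$ produces $\tfrac{1}{2\pi}\ip e^{-i\lambda(l+1+m)}f(f+g)^{-1}d\lambda$ and $\tfrac{1}{2\pi}\ip e^{-i\lambda(l-n)}\tfrac{\lambda^{2d}}{|1-e^{i\lambda\tau}|^{2d}}(f+g)^{-1}d\lambda$, the transposed entries of $\me R$ and $\me P_{\tau}$. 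Since $\me b^{\tau}=D^{\tau}\me a$ by (\ref{koef b_A_f_n-n_c}), this collapses to the linear system $\me P_{\tau}\vec C^{\tau}=\me R D^{\tau}\me a$, whence $\vec C^{\tau}=\me P_{\tau}^{-1}\me R D^{\tau}\me a$ and $\vec C_{\tau}(e^{i\lambda})=\sum_{j\geq0}(\me P_{\tau}^{-1}\me R D^{\tau}\me a)_{j}e^{i\lambda(j+1)}$. The minimality condition (\ref{umova11_f_n-n_c}) is what makes $\me P_{\tau}$ a bounded, boundedly invertible operator on $\ell_2$; establishing this invertibility, together with justifying the interchange of summation and integration under (\ref{coeff_b_f_n-n_c}), is the step I expect to be the main obstacle.

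Finally, for the mean square error I would use the orthogonality of the two spectral measures once more to split $\mt E|B\vec\xi-\widehat B\vec\xi|^{2}$ into $\frac{1}{2\pi}\ip(\vec A_{\tau}-\vec h_{\tau})^{\top}f\,\overline{(\vec A_{\tau}-\vec h_{\tau})}\,d\lambda+\frac{1}{2\pi}\ip\vec h_{\tau}^{\top}g\,\overline{\vec h_{\tau}}\,d\lambda$, where $\vec A_{\tau}^{\top}=\gamma\,\vec B_{\tau}^{\top}$. Writing $\vec A_{\tau}^{\top}-\vec h_{\tau}^{\top}=\overline{\gamma}^{-1}[\,|\gamma|^{2}\vec B_{\tau}^{\top}g+\vec C_{\tau}^{\top}\,](f+g)^{-1}$ and $\vec h_{\tau}^{\top}=\overline{\gamma}^{-1}[\,|\gamma|^{2}\vec B_{\tau}^{\top}f-\vec C_{\tau}^{\top}\,](f+g)^{-1}$, then using $|\overline{\gamma}^{-1}|^{2}=\lambda^{2d}|1-e^{i\lambda\tau}|^{-2d}$ and $|\gamma|^{2}=|1-e^{i\lambda\tau}|^{2d}\lambda^{-2d}$ together with the Hermitian symmetry of $f+g$, reproduces the two integral expressions in (\ref{poh A_f_n-n_c}). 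To reach the closed operator form on the last line I would substitute $\vec C^{\tau}=\me P_{\tau}^{-1}\me R D^{\tau}\me a$, expand each integral as a double sum over Fourier coefficients, and recognise the kernels as the entries of $\me R$, $\me P_{\tau}$ and $\me Q_{\tau}$; the mixed terms telescope by the very system $\me P_{\tau}\vec C^{\tau}=\me R D^{\tau}\me a$ just solved, leaving $\langle\me R D^{\tau}\me a,\me P_{\tau}^{-1}\me R D^{\tau}\me a\rangle+\langle\me Q_{\tau}D^{\tau}\me a,D^{\tau}\me a\rangle$. This last algebraic reduction is the most calculation-heavy part, but it is routine once the Fourier coefficients are identified.
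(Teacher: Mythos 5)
Your proposal follows essentially the same route as the paper: the two defining conditions of the orthogonal projection onto $H^{0}(\vec\xi^{\,(d)}+\vec\eta^{\,(d)})$, with condition 2) yielding the form (\ref{spectr A_f_n-n_c}) up to the unknown positive-frequency series $\vec C_{\tau}$, and condition 1) yielding the linear system $\me P_{\tau}\me c^{\,\tau}=\me R\me b^{\tau}$; your MSE computation via the orthogonality of the two spectral measures is exactly the calculation the paper leaves implicit. The proposal is correct, and the concerns you flag (invertibility of $\me P_{\tau}$ under (\ref{umova11_f_n-n_c}), interchange of sum and integral under (\ref{coeff_b_f_n-n_c})) are likewise passed over in the paper's own proof.
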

  \begin{proof}
See Appendix.
\end{proof}

\section{Filtering based on factorizations of the spectral densities}
\label{classical_filtering_fact_f_n-n_c}

In this section, we  derive the   formulas for the spectral characteristic and the MSE of the estimate $\widehat{A}\xi$  in terms of coefficients of factorizations of the    spectral densities $f(\lambda)$ and  $g(\lambda)$.
 Suppose that   the following canonical factorization take place
\begin{multline}
 \frac{|1-e^{i\lambda\tau}|^{2d}}{\lambda^{2d}}
 (f(\lambda)+g(\lambda))
 =\Theta_{\tau}(e^{-i\lambda})\Theta_{\tau}^*(e^{-i\lambda}),
 \\
   \Theta_{\tau}(e^{-i\lambda})=\sum_{k=0}^{\infty}\theta_{\tau}(k)
  e^{-i\lambda k}, \label{fakt1_f_n-n_c}
\end{multline}
Define the matrix-valued function
$\Psi_{\tau}(e^{-i\lambda})= \{\Psi_{ij}(e^{-i\lambda})\}_{i=\overline{1,M}}^{j=\overline{1,\infty}}$
by the equation
\[
\Psi_{\tau}(e^{-i\lambda})\Theta_{\tau}(e^{-i\lambda})=E_M,
\]
where $E_M$ is an identity $M\times M$ matrix.
Then the following factorization takes place
\begin{multline}
\frac{\lambda^{2d}}{|1-e^{i\lambda\tau}|^{2d}}
 (f(\lambda)+g(\lambda))^{-1} =
 \Psi_{\tau}^*(e^{-i\lambda})\Psi_{\tau}(e^{-i\lambda}),
 \\
 \Psi_{\tau}(e^{-i\lambda})=\sum_{k=0}^{\infty}\psi_{\tau}(k)e^{-i\lambda k}.\label{fakt2_f_n-n_c}
\end{multline}
 Define the linear operators
 $\Psi_{\tau}$, $ \Theta_{\tau}$ in the space $\ell_2$ by the matrices with the matrix entries
 $(\Psi_{\tau})_{k,j}=\psi_{\tau}(k-j)$, $(\Theta_{\tau})_{k,j}=\theta_{\tau}(k-j)$ for $0\leq j\leq k$ and $(
\Psi_{\tau})_{k,j}=0$, $(\Theta_{\tau})_{k,j}=0$ for $0\leq k<j$.

\begin{lema}\label{lema_fact_2_e_n_c}
Let factorization (\ref{fakt1_f_n-n_c})  take  place and let $M\times \infty$ matrix function $\Psi_{\tau}(e^{-i\lambda})$ satisfy equation $\Psi_{\tau}(e^{-i\lambda})\Theta_{\tau}(e^{-i\lambda})=E_M$. Define the linear operators
 $ \Psi_{\tau}$ and  $ \Theta_{\tau}$ in the space $\ell_2$ by the matrices with the matrix entries
 $( \Psi_{\tau})_{k,j}=\psi_{\tau}(k-j)$, $( \Theta_{\tau})_{k,j}=\theta_{\tau}(k-j)$ for $0\leq j\leq k$, $(
\Psi_{\tau})_{k,j}=0$, $(
\Theta_{\tau})_{k,j}=0$ for $0\leq k<j$.
Then:
\\
a) the linear operator $\me P_{\tau}$  admits the factorization \[\me
P_{\tau}=(\Psi_{\tau})^{\top} \overline{\Psi}_{\tau};\]
\\
b) the inverse operator $(\me
P_{\tau})^{-1}$ admits the factorization
\[
 (\me
P_{\tau})^{-1}= \overline{\Theta}_{\tau}(\Theta_{\tau})^{\top}.\]
\end{lema}

\begin{remark}
The entries of the matrix of the operator $(\me P_{\tau})^{-1}$
 are calculated by the formula
\[
 ((\me P_{\tau})^{-1})_{l,j}=\sum_{p=0}^{\min(l,j)}\overline{\theta}_{\tau}(j-p)(\theta_{\tau}(l-p))^{\top},\quad l,j\geq0.\]
\end{remark}

Let the canonical factorizations
\begin{multline} \label{fakt3_f_n-n_c}
 \frac{|1-e^{i\lambda\tau}|^{2d}}{\lambda^{2d}} f(\lambda) =\sum_{k=-\infty}^{\infty}f_{\tau}(k)e^{i\lambda k}=
 \Phi_{\tau}(e^{-i\lambda})\Phi_{\tau}^*(e^{-i\lambda}),
 \\
  \Phi_{\tau}(e^{-i\lambda})=\sum_{k=0}^{\infty}\phi_{\tau}(k)e^{-i\lambda k}
 \end{multline}
 and
 \begin{multline}
 \frac{|1-e^{i\lambda\tau}|^{2d}}{\lambda^{2d}} g(\lambda) =\sum_{k=-\infty}^{\infty}g_{\tau}(k)e^{i\lambda k}=
 \Omega_{\tau}(e^{-i\lambda})\Omega_{\tau}^*(e^{-i\lambda}),
 \\
  \Omega_{\tau}(e^{-i\lambda})=\sum_{k=0}^{\infty}\omega_{\tau}(k)e^{-i\lambda k}
  \label{fakt6_f_n-n_c}
 \end{multline}
 take place.

\begin{lema}\label{lema_fact_3_f_n-c_c}
  Let    factorizations (\ref{fakt1_f_n-n_c}), (\ref{fakt3_f_n-n_c}) and (\ref{fakt6_f_n-n_c}) take place.
 Define by $\vec e^{\,\tau}_m=(\Theta^{\top}_{\tau}\me R\me b^{\tau})_m$, $m\geq0$, the $m$th element of the vector
$\widetilde{\me e}_{\tau}=\Theta^{\top}_{\tau}\me R \me b^{\tau}$. Then
\[
\vec e^{\,\tau}_m=\sum_{j=0}^{\infty}Z_{\tau}(m+j+1)\vec b^{\tau}_j=-
 \sum_{j=0}^{\infty}X_{\tau}(m+j+1)\vec b^{\tau}_j,
\]
  where $Z_{\tau}(j)$ and $X_{\tau}(j)$  are defined as
\begin{align*}
  Z_{\tau}(j)&=\sum_{l=0}^{\infty}\overline{\psi}_{\tau}(l)\overline{f}_{\tau}(l-j),\, j\in \mr Z,
 \\
 f_{\tau}(k)&=\sum_{m=\max\{0,-k\}}^{\infty}\phi_{\tau}(m)\phi_{\tau}^*(k+m),\, k\in\mr Z,
\end{align*}
and
\begin{align*}
  X_{\tau}(j)&=\sum_{l=0}^{\infty}\overline{\psi}_{\tau}(l)\overline{g}_{\tau}(l-j),\, j\in \mr Z,
 \\
 g_{\tau}(j)&=\sum_{m=\max\{0,-j\}}^{\infty}\omega_{\tau}(m)\omega_{\tau}^*(j+m),\, j\in\mr Z.
\end{align*}
\end{lema}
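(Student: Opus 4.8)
The plan is to expand the $m$th block of $\widetilde{\me e}_{\tau}=\Theta_{\tau}^{\top}\me R\,\me b^{\tau}$ directly from the block-operator definitions, reduce it to a single scalar Fourier-coefficient computation, and then collapse the resulting product of factors using the three canonical factorizations together with the defining relation $\Psi_{\tau}(e^{-i\lambda})\Theta_{\tau}(e^{-i\lambda})=E_M$.

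First I would record that $R_{l,j}=r(l+j)$ depends only on $l+j$, where $r(n)=\frac{1}{2\pi}\ip e^{-i\lambda n}\ld(f(\lambda)(f(\lambda)+g(\lambda))^{-1}\rd)^{\top}d\lambda$, so that $(\me R)_{l,j}=R_{l+1,j}=r(l+j+1)$ (this is the source of the $+1$ in $m+j+1$). Since the block transpose gives $(\Theta_{\tau}^{\top})_{m,l}=(\theta_{\tau}(l-m))^{\top}$ for $l\geq m$, substituting $l'=l-m$ and interchanging the (absolutely convergent, by (\ref{coeff_b_f_n-n_c}) and the integrable kernel norm (\ref{P1})) summations yields
\[
\vec e^{\,\tau}_m=\sum_{j=0}^{\infty}\ld(\sum_{l'=0}^{\infty}(\theta_{\tau}(l'))^{\top}r(l'+m+j+1)\rd)\vec b^{\,\tau}_j.
\]
Hence it suffices to identify the inner coefficient $\sum_{l'\geq0}(\theta_{\tau}(l'))^{\top}r(l'+n)$ with $Z_{\tau}(n)$, and then with $-X_{\tau}(n)$, for every shift $n=m+j+1\geq1$.

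Passing to the spectral domain, $\sum_{l'}(\theta_{\tau}(l'))^{\top}r(l'+n)=\frac{1}{2\pi}\ip e^{-i\lambda n}\ld(f(f+g)^{-1}\Theta_{\tau}(e^{-i\lambda})\rd)^{\top}d\lambda$. Writing $f=\frac{\lambda^{2d}}{|1-e^{i\lambda\tau}|^{2d}}\Phi_{\tau}\Phi_{\tau}^{*}$ from (\ref{fakt3_f_n-n_c}) and $(f+g)^{-1}=\frac{|1-e^{i\lambda\tau}|^{2d}}{\lambda^{2d}}\Psi_{\tau}^{*}\Psi_{\tau}$ from (\ref{fakt2_f_n-n_c}), the scalar factors cancel and $\Psi_{\tau}\Theta_{\tau}=E_M$ collapses the product to $f(f+g)^{-1}\Theta_{\tau}=\Phi_{\tau}\Phi_{\tau}^{*}\Psi_{\tau}^{*}=\ld(\sum_{k}f_{\tau}(k)e^{i\lambda k}\rd)\Psi_{\tau}^{*}$, where I used $\Phi_{\tau}\Phi_{\tau}^{*}=\frac{|1-e^{i\lambda\tau}|^{2d}}{\lambda^{2d}}f=\sum_k f_{\tau}(k)e^{i\lambda k}$. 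Since $f+g$ is Hermitian, taking $(\cdot)^{\top}$ turns this into $\overline{\Psi_{\tau}\sum_k f_{\tau}(k)e^{i\lambda k}}$, and extracting the $n$th Fourier coefficient gives exactly $\sum_{l}\overline{\psi}_{\tau}(l)\overline{f}_{\tau}(l-n)=Z_{\tau}(n)$, the first representation.

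For the second representation I would use $f(f+g)^{-1}=E-g(f+g)^{-1}$, so that $r(n)=\delta_{n,0}E-\widetilde r(n)$ with $\widetilde r$ defined from $g(f+g)^{-1}$ in place of $f(f+g)^{-1}$. Because the relevant shift satisfies $n=m+j+1\geq1$, the identity term $\delta_{n,0}E$ never contributes, and repeating verbatim the previous computation with $g$ and $\Omega_{\tau}$ from (\ref{fakt6_f_n-n_c}) in place of $f$ and $\Phi_{\tau}$ produces $\sum_{l'}(\theta_{\tau}(l'))^{\top}r(l'+n)=-\sum_{l}\overline{\psi}_{\tau}(l)\overline{g}_{\tau}(l-n)=-X_{\tau}(n)$. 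The main obstacle is the spectral-domain identification of the previous paragraph: one must cancel the scalar factors $\lambda^{2d}/|1-e^{i\lambda\tau}|^{2d}$ correctly, keep careful track of the transposes and conjugates (using Hermiticity of $f+g$ so that $(\Psi_{\tau}^{*})^{\top}=\overline{\Psi}_{\tau}$ and $F^{\top}=\overline F$ for $F=\Phi_{\tau}\Phi_{\tau}^{*}$), and invoke $\Psi_{\tau}\Theta_{\tau}=E_M$ at exactly the right moment; the interchanges of summation and integration are justified by the convergence conditions (\ref{coeff_b_f_n-n_c}) and the integrable kernel norm (\ref{P1}).
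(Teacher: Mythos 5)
Your proposal is correct and follows essentially the same route as the paper's proof: both reduce $\vec e^{\,\tau}_m$ to the shifted Fourier coefficients of $\bigl(f(f+g)^{-1}\Theta_{\tau}\bigr)^{\top}$, collapse the product via the three canonical factorizations and $\Psi_{\tau}(e^{-i\lambda})\Theta_{\tau}(e^{-i\lambda})=E_M$ to obtain $Z_{\tau}$, and then get the second representation from $f(f+g)^{-1}=E-g(f+g)^{-1}$ together with the observation that the shifts $m+j+1\geq1$ kill the identity term. The only (cosmetic) difference is that you perform the $\Psi_{\tau}\Theta_{\tau}=E_M$ cancellation inside the spectral integral, whereas the paper first expands $R(n)=\sum_l\psi_{\tau}^{\top}(l)Z_{\tau}(l+n)$ and carries out the same cancellation at the level of the coefficient convolution $\sum_p\psi_{\tau}(l-p)\theta_{\tau}(p-m)=\delta_{l,m}E_M$.
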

\begin{proof}
See Appendix.
\end{proof}

Define  the linear operators $\me G^{f}_{\tau}$ and $\me G^{g}_{\tau}$  in the space $\ell_2$ by matrices with the matrix entries $(\me G^{f}_{\tau})_{l,j}=\overline{f}_{\tau}(l-j)$ and $(\me G^{g}_{\tau})_{l,j}=\overline{g}_{\tau}(l-j)$ $l,j\geq0$.  And the linear operators
  $\widetilde{\Phi}_{\tau}$ and $\widetilde{\Omega}_{\tau}$ in the space $\ell_2$ determined by a matrix with the matrix entries
  $(\widetilde{\Phi}_{\tau})_{l,j}=\phi^{\top}_{\tau}(l-j)$ and  $(\widetilde{\Omega}_{\tau})_{l,j}=\omega^{\top}_{\tau}(l-j)$ for $0\leq j\leq l$,  $ (\widetilde{\Phi}_{\tau})_{l,j}=0$ and $ (\widetilde{\Omega}_{\tau})_{l,j}=0$ for $0\leq l<j$.

\begin{thm}\label{thm3_f_n-n_c_fact}
Consider two   stochastic processes with
periodically stationary increments, ${\xi}(t)$ and ${\eta}(t)$, generating  stationary increment sequences $\vec\xi^{\,(d)}_j$ and $\vec\eta^{\,(d)}_j$
with  spectral densities $f(\lambda)$ and $g(\lambda)$.      Assume that condition   (\ref{coeff_b_f_n-n_c}) are satisfied. The solution $\widehat{A}\xi$ to the filtering problem for the linear functional $A{\xi}$
is  calculated by formula
(\ref{otsinka A_f_n-n_c}).
If   factorizations (\ref{fakt1_f_n-n_c})--(\ref{fakt2_f_n-n_c}) take place, then  the spectral characteristic $\vec h_{\tau}(\lambda)$ of the estimate $\widehat{A}\xi$ can be calculated by the formula
\begin{align}
 \vec h_{\tau}(\lambda)=\frac{(1-e^{-i\lambda\tau})^d}{(i\lambda)^{d}}
\ld(\sum_{k=0}^{\infty}\psi^{\top}_{\tau}(k)e^{-i\lambda k}\rd)
\sum_{m=0}^{\infty}(\overline{\psi}_{\tau} \me C^{f}_{\tau})_m e^{-i\lambda m},\label{spectr A_f_n-n_c_fact}
\end{align}
where $\overline{\psi}_{\tau}=(\overline{\psi}_{\tau}(0),\overline{\psi}_{\tau}(1),\overline{\psi}_{\tau}(2),\ldots)$,
\[
(\overline{\psi}_{\tau} \me C^{f}_{\tau} )_m=\sum_{j=0}^{\infty}\overline{\psi}_{\tau}(j)\me c^{f}_{\tau}(j+m),
\]
\begin{align*}
  \me c^{f}_{\tau}(m)
  =\sum_{l=0}^{\infty}\overline{\phi}_{\tau}(l)(\widetilde{\Phi}_{\tau}D^{\tau}\me a)_{l+m}
  =(\widetilde{\Phi}_{\tau}^*\widetilde{\Phi}_{\tau}D^{\tau}\me a)_{m},
\end{align*}
and the  MSE is calculated by the formula
\be
 \Delta(f,g;\widehat{A}\xi)=\|\widetilde{\Phi}_{\tau}D^{\tau}\me a\|^2-\|\overline{\psi}_{\tau} \me C^{f}_{\tau}\|^2.\label{poh A_f_n_d_fact}
\ee
If factorizations (\ref{fakt1_f_n-n_c}) and (\ref{fakt6_f_n-n_c})
   take place, then
   the spectral characteristic $\vec h_{\tau}(\lambda)$ can be calculated by the formula
\begin{align}
 \vec h_{\tau}(\lambda)=\frac{(1-e^{-i\lambda\tau})^d}{(i\lambda)^{d}}\ld(\vec B_{\tau}(e^{-i\lambda})
-\ld(\sum_{k=0}^{\infty}\psi^{\top}_{\tau}(k)e^{-i\lambda k}\rd)
\sum_{m=0}^{\infty}(\overline{\psi}_{\tau} \me C^{g}_{\tau})_me^{-i\lambda m}\rd),
\label{spectr2 A_f_n-n_c_fact}
\end{align}
where
\[
(\overline{\psi}_{\tau} \me C^{g}_{\tau} )_m=\sum_{k=0}^{\infty}\overline{\psi}_{\tau}(k)\me c^{g}_{\tau}(k+m),
\]
\begin{align*}
  \me c^{g}_{\tau}(m)
  =\sum_{l=0}^{\infty}\overline{\omega}_{\tau}(l)(\widetilde{\Omega}_{\tau}D^{\tau}\me a)_{l+m}
  =(\widetilde{\Omega}_{\tau}^*\widetilde{\Omega}_{\tau}D^{\tau}\me a)_{m},
\end{align*}
and the  MSE is calculated by the formula
\be
 \Delta(f,g;\widehat{A}\xi)=\|\widetilde{\Omega}_{\tau}D^{\tau}\me a\|^2-\|\overline{\psi}_{\tau} \me C^{g}_{\tau}\|^2.\label{poh A2_f_n-n_d_fact}
\ee
\end{thm}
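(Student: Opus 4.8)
The plan is to derive the two spectral characteristics from the general formula~(\ref{spectr A_f_n-n_c}) of Theorem~\ref{thm1_f_n-n_c} by inserting the canonical factorizations together with the operator identities of Lemmas~\ref{lema_fact_2_e_n_c} and~\ref{lema_fact_3_f_n-c_c}, and then to read off the mean square errors from the Pythagorean identity $\Delta(f,g;\widehat{A}\xi)=\|B\vec\xi\|^2-\|\widehat{B}\vec\xi\|^2$, which holds because $\widehat{B}\vec\xi$ is the orthogonal projection of $B\vec\xi$ onto $H^{0}(\vec\xi^{\,(d)}+\vec\eta^{\,(d)})$. I would establish the first pair (\ref{spectr A_f_n-n_c_fact})--(\ref{poh A_f_n_d_fact}) in detail and then obtain the second pair (\ref{spectr2 A_f_n-n_c_fact})--(\ref{poh A2_f_n-n_d_fact}) from a signal--noise symmetry, so that no second long computation is needed.

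For the spectral characteristic I would first rewrite the vector $\vec C_{\tau}$ occurring in (\ref{spectr A_f_n-n_c}). Its coefficients are $(\me P_{\tau}^{-1}\me R\me b^{\tau})_j$ with $\me b^{\tau}=D^{\tau}\me a$; the factorization $\me P_{\tau}^{-1}=\overline{\Theta}_{\tau}(\Theta_{\tau})^{\top}$ from Lemma~\ref{lema_fact_2_e_n_c}(b) turns this into $(\overline{\Theta}_{\tau}\widetilde{\me e}_{\tau})_j$ with $\widetilde{\me e}_{\tau}=\Theta_{\tau}^{\top}\me R\me b^{\tau}$, whose entries are made explicit by Lemma~\ref{lema_fact_3_f_n-c_c}. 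I would then substitute $(f+g)^{-1}=\frac{|1-e^{i\lambda\tau}|^{2d}}{\lambda^{2d}}\Psi_{\tau}^{*}\Psi_{\tau}$ from (\ref{fakt2_f_n-n_c}) and $f(f+g)^{-1}=\Phi_{\tau}\Phi_{\tau}^{*}\Psi_{\tau}^{*}\Psi_{\tau}$ (obtained by combining (\ref{fakt3_f_n-n_c}) and (\ref{fakt2_f_n-n_c})) into (\ref{spectr A_f_n-n_c}). The scalar weights collapse by the elementary identity
\[
\frac{(-i\lambda)^d}{(1-e^{i\lambda\tau})^d}\cdot\frac{|1-e^{i\lambda\tau}|^{2d}}{\lambda^{2d}}=\frac{(1-e^{-i\lambda\tau})^d}{(i\lambda)^d},
\]
so that a common scalar factor $\frac{(1-e^{-i\lambda\tau})^d}{(i\lambda)^d}$ and a common right factor $\Psi_{\tau}$ factor out of both terms of (\ref{spectr A_f_n-n_c}); the residual bracket is an analytic (causal) series, and the generating relation $\Psi_{\tau}(e^{-i\lambda})\Theta_{\tau}(e^{-i\lambda})=E_M$ together with the bookkeeping of $\widetilde{\me e}_{\tau}$ is what I expect to identify it with $\sum_m(\overline{\psi}_{\tau}\me C^{f}_{\tau})_m e^{-i\lambda m}$, giving exactly (\ref{spectr A_f_n-n_c_fact}). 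This Wiener--Hopf identification of the analytic part is the technical crux of this step.

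The error then follows from $\Delta=\|B\vec\xi\|^2-\|\widehat{B}\vec\xi\|^2$. Using the spectral representation (\ref{SpectrPred_incr_xi_c}) and factorization (\ref{fakt3_f_n-n_c}), the first term equals $\frac{1}{2\pi}\ip|\Phi_{\tau}^{\top}(e^{-i\lambda})\vec B_{\tau}(e^{-i\lambda})|^2\,d\lambda$, which by Parseval is $\|\widetilde{\Phi}_{\tau}D^{\tau}\me a\|^2$, since the analytic product $\Phi_{\tau}^{\top}\vec B_{\tau}$ has Fourier coefficients $(\widetilde{\Phi}_{\tau}D^{\tau}\me a)_n$. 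For the second term I would insert the factored characteristic (\ref{spectr A_f_n-n_c_fact}) into $\|\widehat{B}\vec\xi\|^2=\frac{1}{2\pi}\ip(\vec h_{\tau}(\lambda))^{\top}(f(\lambda)+g(\lambda))\overline{\vec h_{\tau}(\lambda)}\,d\lambda$; writing $f+g=\frac{\lambda^{2d}}{|1-e^{i\lambda\tau}|^{2d}}\Theta_{\tau}\Theta_{\tau}^{*}$ and using $\Psi_{\tau}\Theta_{\tau}=E_M$, the middle factor $\Psi_{\tau}(f+g)\Psi_{\tau}^{*}$ reduces to $\frac{\lambda^{2d}}{|1-e^{i\lambda\tau}|^{2d}}E_M$, the $\lambda$-weights cancel against the prefactors of (\ref{spectr A_f_n-n_c_fact}), and Parseval leaves $\|\overline{\psi}_{\tau}\me C^{f}_{\tau}\|^2$. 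Subtracting yields (\ref{poh A_f_n_d_fact}).

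For the second pair I would exploit a symmetry. Since the coefficients $\vec b^{\tau}_j$ are supported on $j\geq0$, the combination $B\vec\xi+B\vec\eta=\sum_{j\geq0}(\vec b^{\tau}_j)^{\top}(\vec\xi^{\,(d)}_{-j}+\vec\eta^{\,(d)}_{-j})$ is built from the observed increments and therefore lies in $H^{0}(\vec\xi^{\,(d)}+\vec\eta^{\,(d)})$; consequently $\widehat{B}\vec\xi=(B\vec\xi+B\vec\eta)-\widehat{B}\vec\eta$, where $\widehat{B}\vec\eta$ is the optimal estimate of the same functional $B\vec\eta$ of the noise on the same observation space, and the filtering error $B\vec\xi-\widehat{B}\vec\xi$ equals $\widehat{B}\vec\eta-B\vec\eta$. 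Repeating the two previous paragraphs with the roles of $\xi$ and $\eta$ — hence of $f$ and $g$, and of $\Phi_{\tau}$ and $\Omega_{\tau}$ — interchanged then produces (\ref{spectr2 A_f_n-n_c_fact}) and (\ref{poh A2_f_n-n_d_fact}). The main obstacle I anticipate is the bookkeeping of transposes and complex conjugations in passing between the frequency-domain matrix products and the triangular operators $\widetilde{\Phi}_{\tau},\widetilde{\Omega}_{\tau},\Psi_{\tau},\Theta_{\tau}$, and the attendant justification of interchanging the infinite summations so that the triangular truncations genuinely realise the projection onto $H^{0}$; the summability conditions (\ref{coeff_b_f_n-n_c}), together with the kernel-norm bound (\ref{P1}), are exactly what guarantee absolute convergence and legitimise every rearrangement.
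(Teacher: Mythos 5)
Your proposal is correct, and for the first pair of formulas it is essentially the paper's own proof: both substitute factorizations (\ref{fakt2_f_n-n_c})--(\ref{fakt3_f_n-n_c}) into (\ref{spectr A_f_n-n_c}), replace $\me P_{\tau}^{-1}$ by $\overline{\Theta}_{\tau}(\Theta_{\tau})^{\top}$ via Lemma \ref{lema_fact_2_e_n_c}, feed the resulting vector $\widetilde{\me e}_{\tau}=\Theta_{\tau}^{\top}\me R\me b^{\tau}$ through Lemma \ref{lema_fact_3_f_n-c_c}, and extract the causal part after the anticausal terms cancel through $\Psi_{\tau}(e^{-i\lambda})\Theta_{\tau}(e^{-i\lambda})=E_M$; the ``Wiener--Hopf identification'' you flag as the crux is precisely the cancellation the paper performs between $\sum_{m\geq1}\sum_{j}Z_{\tau}(m+j)\vec b^{\tau}_je^{i\lambda m}$ and the $\vec C_{\tau}$-term. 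You differ from the paper in two places, both legitimately. For the MSE you invoke the Pythagorean identity $\Delta=\mt E|B\vec\xi|^2-\mt E|\widehat B\vec\xi|^2$, while the paper expands $\mt E|B\vec\xi-\widehat B\vec\xi|^2$ into four integrals; these agree by orthogonality of the projection, and your version reaches $\|\widetilde{\Phi}_{\tau}D^{\tau}\me a\|^2-\|\overline{\psi}_{\tau}\me C^{f}_{\tau}\|^2$ with less computation. For the second pair the paper reruns the frequency-domain argument using $f(f+g)^{-1}=E_{\infty}-g(f+g)^{-1}$ together with the identity $\sum_jZ_{\tau}(m+j+1)\vec b^{\tau}_j=-\sum_jX_{\tau}(m+j+1)\vec b^{\tau}_j$ of Lemma \ref{lema_fact_3_f_n-c_c}; your observation that $B\vec\xi+B\vec\eta$ lies in $H^{0}(\vec\xi^{\,(d)}+\vec\eta^{\,(d)})$, so that $B\vec\xi-\widehat B\vec\xi=\widehat B\vec\eta-B\vec\eta$ and the whole first computation can be recycled with $f\leftrightarrow g$, $\Phi_{\tau}\leftrightarrow\Omega_{\tau}$ (and $\Psi_{\tau}$ unchanged, being built from $f+g$), is the time-domain form of the same identity and saves the second calculation. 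The only extra obligation your route incurs is the $H$-convergence of the series defining $B\vec\xi+B\vec\eta$, which conditions (\ref{coeff_b_f_n-n_c}) supply, as you note. Both routes are sound and yield identical formulas.
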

\begin{proof}
See Appendix.
\end{proof}
\begin{zau}
  \label{zauv_operator_U_f_n-n_c}
Let \cite{Luz_Mokl_book}
\[
 \frac{|1-e^{i\lambda\tau}|^{2d}}{\lambda^{2d}}=\ld|w_{\tau}(e^{-i\lambda})\rd|^2=\ld|\sum_{k=0}^{\infty}w_{\tau}(k)e^{-i\lambda k}\rd|^2.
 \]
Suppose, for example, that  factorization (\ref{fakt2_f_n-n_c}) takes place.
Then the spectral density $f(\lambda)+g(\lambda)$
admits a  factorization
\be
  f(\lambda)+g(\lambda)
 =\Theta(e^{-i\lambda})\Theta^*(e^{-i\lambda}),\quad
  \Theta(e^{-i\lambda})=\sum_{k=0}^{\infty}\theta(k)
  e^{-i\lambda k} \label{fakt5_f_n-n_c}
\ee
and the following relations hold true:
\[
 \Theta_{\tau}(e^{-i\lambda}) =w_{\tau}(e^{-i\lambda})\Theta (e^{-i\lambda}),
\]
\be
 \theta_{\tau}(k)= \sum_{m=0}^kw_{\tau}(k-m)\theta (m),\quad k=0,1,\dots
 \label{relation2_f_n-n_c}
 \ee
 or
 \[
 \theta_{\tau,ij}(k)= \sum_{m=0}^kw_{\tau}(k-m)\theta_{ij} (m), \quad i=1,\dots,\infty;\,j=1,\dots,M;\,k=0,1,\dots.
 \]
 Define the matrix-valued function
$\Psi(e^{-i\lambda})= \{\Psi_{ij}(e^{-i\lambda})\}_{i=\overline{1,M}}^{j=\overline{1,\infty}}$
by the equation
\[
\Psi(e^{-i\lambda})\Theta(e^{-i\lambda})=E_M,
\]
where $E_M$ is an identity $M\times M$ matrix. Then the following factorization takes place
\[
 (f(\lambda)+g(\lambda))^{-1} =
 \Psi^*(e^{-i\lambda})\Psi(e^{-i\lambda}), \quad \Psi(e^{-i\lambda})=\sum_{k=0}^{\infty}\psi(k)e^{-i\lambda k},\label{fakt8_f_n-n_c}
\]
 and
 \be
 \Psi(e^{-i\lambda})=\Psi_{\tau}(e^{-i\lambda})w_{\tau}(e^{-i\lambda}).
 \label{formula_11_f_n-n_c}
 \ee
 Define a linear operator $ \me W_{\tau}$ in the space $\ell_2$ by a matrix with the entries
 $(\me W_{\tau})_{k,j}=w_{\tau}(k-j)$ for $0\leq j\leq k$, $ (\me W_{\tau})_{k,j}=0$ for $0\leq k<j$.
 Relation (\ref{relation2_f_n-n_c}) can be represented in the form
 \begin{equation}
 \boldsymbol{\theta}_{\tau} =\me W^{\tau}\boldsymbol{\theta},
 \label{relation_for_psi_f_n-n_c}
 \end{equation}
where
\begin{align*}
  \boldsymbol{\theta}_{\tau}&=((\theta_{\tau}(0))^{\top},
(\theta_{\tau}(1))^{\top},(\theta_{\tau}(2))^{\top},\ldots)^{\top},
\\
 \boldsymbol{\theta} &=((\theta (0))^{\top},(\theta (1))^{\top},(\theta (2))^{\top},\ldots)^{\top}
\end{align*}
are vectors with the matrix entries
$\theta_{\tau}(k)=\{\theta_{\tau,ij}(k)\}_{i=\overline{1,\infty}}^{j=\overline{1,M}},\, k=0,1,2,\dots$, and
$\theta(k)=\{\theta_{ij}(k)\}_{i=\overline{1,\infty}}^{j=\overline{1,M}},\, k=0,1,2,\dots$.
\end{zau}

\section{Minimax filtering based on observations with periodically stationary increment noise}
\label{minimax_filtering}

\subsection{Least favorable spectral densities and minimax-robust spectral characteristic}

Consider the filtering problem for the functional ${A}\xi$ based on the observations
$\xi(t)+\eta(t)$ at points $t\leq0$ when the spectral densities of the generated stationary increments sequences are not exactly known
while a set $\md D=\md D_f\times\md D_g$ of admissible spectral densities is defined.
Here $\xi(t)$ and $\eta(t)$ are two uncorrelated processes with periodically stationary increments.
We apply the minimax (robust) approach of
estimation, which is formalized by the following two definitions.

\begin{ozn}
  For a given class of spectral densities $\mathcal{D}=\md
D_f\times\md D_g$ the spectral densities
$f^0(\lambda)\in\mathcal{D}_f$, $g^0(\lambda)\in\md D_g$
are called the least favourable spectral densities in the class $\mathcal{D}$ for the
optimal linear filtering of the functional $A\xi$ if
\[
 \Delta(f^0,g^0)=\Delta(h(f^0,g^0);f^0,g^0)=
 \max_{(f,g)\in\mathcal{D}_f\times\md D_g}\Delta(h(f,g);f,g).\]
\end{ozn}
\begin{ozn}
  For a given class of spectral
densities $\mathcal{D}=\md D_f\times\md D_g$ the spectral
characteristic $h^0(e^{i\lambda})$ of the optimal estimate of the functional
$A\xi$ is called minimax (robust) if
\[
 h^0(e^{i\lambda})\in H_{\mathcal{D}}=\bigcap_{(f,g)\in\mathcal{D}_f\times\md D_g}L_2^{0 }(f+g),\]
\[
 \min_{h\in H_{\mathcal{D}}}\max_{(f,g)\in \mathcal{D}_f\times\md D_g}\Delta(h;f,g)=\sup_{(f,g)\in\mathcal{D}_f\times\md
D_g}\Delta(h^0;f,g).\]
\end{ozn}

Taking into account the introduced definitions and the  relations derived in this chapter, as well as Remark \ref{zauv_operator_U_f_n-n_c} for the spectral densities
$f(\lambda)$, $g(\lambda)$, $f(\lambda)+g(\lambda)$, we can verify that the following lemmas hold true.

\begin{lema}
  The spectral densities $f^0\in\mathcal{D}_f$,
$g^0\in\mathcal{D}_g$ which satisfy condition
(\ref{umova11_f_n-n_c}) are the least favourable densities in the class $\mathcal{D}$ for the optimal
linear filtering of the functional $A\xi$ if operators $\me
P_{\tau}^0$, $\me R^0$, $\me Q_{\tau}^0$ defined by
the Fourier coefficients of the functions
\[
\dfrac{\lambda^{2d}}{|1-e^{i\lambda\tau}|^{2d}}
\ld((f^0(\lambda)+g^0(\lambda))^{-1}\rd)^{\top},\]
\[
\ld(
g^0(\lambda)(f^0(\lambda)+g^0(\lambda))^{-1}\rd)^{\top},\]
\[
\dfrac{|1-e^{i\lambda\tau}|^{2d}}{\lambda^{2d}}
\ld(f^0(\lambda)(f^0(\lambda)+g^0(\lambda))^{-1}g^0(\lambda)\rd)^{\top}
\]
determine a solution of the constrained optimization problem
 \begin{multline} \max_{f\in\mathcal{D}}\ld(\ld\langle
\me R
 D^{\tau}\me a,\me P_{\tau}^{-1}\me R D^{\tau}\me
a\rd\rangle+\langle\me Q_{\tau} D^{\tau}\me a,
 D^{\tau}\me a\rangle\rd)
 \\= \ld\langle \me R^0
 D^{\tau}\me a,(\me
P_{\tau}^0)^{-1}\me R^0 D^{\tau}\me a \rd\rangle+\langle\me
Q_{\tau}^0 D^{\tau}\me a, D^{\tau}\me a\rangle.
\label{minimax1_f_n-n_c}\end{multline}
\end{lema}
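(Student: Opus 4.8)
The plan is to obtain the assertion as a direct consequence of the closed-form expression for the error of the optimal estimate established in Theorem~\ref{thm1_f_n-n_c}, read together with the defining property of least favourable densities. First I would recall that, for every admissible pair $(f,g)\in\md D_f\times\md D_g$ for which the minimality condition (\ref{umova11_f_n-n_c}) holds, the mean square error of the optimal linear estimate $h(f,g)$ is given by (\ref{poh A_f_n-n_c}),
\[
\Delta(h(f,g);f,g)=\ld\langle \me R D^{\tau}\me a,\me P_{\tau}^{-1}\me R D^{\tau}\me a\rd\rangle+\ld\langle\me Q_{\tau} D^{\tau}\me a, D^{\tau}\me a\rd\rangle,
\]
where the operators $\me P_{\tau}$, $\me R$, $\me Q_{\tau}$ are built from the Fourier coefficients of precisely the three matrix functions listed in the statement. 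In particular the right-hand side is a functional of the pair $(f,g)$ alone, so the whole error surface over the class $\md D$ is encoded by these operators.

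Next I would invoke the definition of least favourable densities: $(f^0,g^0)$ is least favourable in $\md D$ exactly when it maximizes the value function $\Delta(h(f,g);f,g)$ over $\md D_f\times\md D_g$. Substituting the representation above turns this maximization verbatim into the constrained extremal problem (\ref{minimax1_f_n-n_c}), whose right-hand side is the value of the same functional evaluated through the operators $\me P_{\tau}^0$, $\me R^0$, $\me Q_{\tau}^0$ associated with $(f^0,g^0)$. Hence, once these operators are shown to realize the maximum in (\ref{minimax1_f_n-n_c}), one has $\Delta(h(f^0,g^0);f^0,g^0)\ge\Delta(h(f,g);f,g)$ for all $(f,g)\in\md D$, which is the least favourability of $(f^0,g^0)$. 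To fit this into the minimax framework I would add the standard saddle-point remark: writing $h^0=h(f^0,g^0)$, the factorizations of Remark~\ref{zauv_operator_U_f_n-n_c} and (\ref{fakt1_f_n-n_c})--(\ref{fakt2_f_n-n_c}) guarantee $h^0\in L_2^{0}(f+g)$ for each admissible pair, so $h^0\in H_{\md D}$ and $\Delta(h^0;f,g)$ is defined throughout $\md D$; optimality of the projection $h(f^0,g^0)$ for the fixed pair then gives $\Delta(h^0;f^0,g^0)\le\Delta(h;f^0,g^0)$ for every $h\in H_{\md D}$, closing the saddle-point inequalities.

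The main obstacle I anticipate is not the algebraic identification but its uniform legitimacy over the whole class. The error formula (\ref{poh A_f_n-n_c}) presupposes that $\me P_{\tau}$ is invertible and that $\me P_{\tau}^{-1}\me R D^{\tau}\me a$ lies in $\ell_2$, which in turn rests on the minimality condition (\ref{umova11_f_n-n_c}); I must therefore check that this condition, and hence the boundedness of the factorization operators of Remark~\ref{zauv_operator_U_f_n-n_c}, persist for every $(f,g)\in\md D$ so that the functional in (\ref{minimax1_f_n-n_c}) is finite and continuous as $(f,g)$ varies. The subtler point is comparing the \emph{fixed} characteristic $h^0$ against arbitrary densities: whereas (\ref{poh A_f_n-n_c}) is the error of the optimal estimate for each pair, the saddle-point inequality $\Delta(h^0;f,g)\le\Delta(h^0;f^0,g^0)$ requires rewriting $\Delta(h^0;f,g)$ as a spectral integral and verifying that its supremum over $\md D$ is attained at $(f^0,g^0)$ exactly when the operators solve (\ref{minimax1_f_n-n_c}); this is where the analytic care of the argument is concentrated.
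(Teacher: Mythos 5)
Your argument is correct and coincides with what the paper intends: the lemma is stated without a separate proof precisely because it is the immediate combination of the MSE formula (\ref{poh A_f_n-n_c}) from Theorem \ref{thm1_f_n-n_c} with the definition of least favourable densities, which is exactly your first two paragraphs. The additional saddle-point and well-posedness remarks you raise are sensible but belong to the surrounding discussion in the paper rather than to this lemma itself.
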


\begin{lema}
  The spectral densities $f^0\in\mathcal{D}_f$,
$g^0\in\mathcal{D}_g$ which admit the canonical factorizations (\ref{fakt1_f_n-n_c}), (\ref{fakt3_f_n-n_c}) or canonical factorizations (\ref{fakt1_f_n-n_c}), (\ref{fakt6_f_n-n_c})
are the least favourable densities in the class $\mathcal{D}$ for the optimal
linear filtering of the functional $A\xi$ based on observations of the process $\xi(t)+\eta(t)$ at points $t\leq0$ if matrix coefficients $\{\psi^0(k), \phi^0(k), \omega^0(k): k\geq0\}$
of the canonical factorizations
\be \label{fakt231_f_n-n_c}
 (f^0(\lambda)+g^0(\lambda))^{-1} =
 \ld(\sum_{k=0}^{\infty}\psi^0(k)e^{-i\lambda k}\rd)^*
 \ld(\sum_{k=0}^{\infty}\psi^0(k)e^{-i\lambda k}\rd),
 \ee
 \be \label{fakt232_f_n-n_c}
 f^0(\lambda)
 =\ld(\sum_{k=0}^{\infty}\phi^0(k)e^{-i\lambda k}\rd)
 \ld(\sum_{k=0}^{\infty}\phi^0(k)e^{-i\lambda k}\rd)^*,
 \ee
\be \label{fakt233_f_n-n_c}
 g^0(\lambda) =
 \ld(\sum_{k=0}^{\infty}\omega^0(k)e^{-i\lambda k}\rd)
 \ld(\sum_{k=0}^{\infty}\omega^0(k)e^{-i\lambda k}\rd)^*
\ee
determine a solution of the constrained optimization problem
\be\langle\widetilde{\Phi}_{\tau}D^{\tau}\me a^{\tau},\widetilde{\Phi}_{\tau}D^{\tau}\me a^{\tau}\rangle
 -\langle \overline{\psi}_{\tau}\me C_{\tau}^f ,\overline{\psi}_{\tau} \me C_{\tau}^f \rangle\rightarrow\sup, \label{minimax1f_f_n-n_c_fact}\ee
\begin{align*}
 f(\lambda)&=\Phi(e^{-i\lambda})\Phi^*(e^{-i\lambda})\in \mathcal{D}_f,
 \\
 g(\lambda)&=
\Theta(e^{-i\lambda})\Theta^*(e^{-i\lambda})
-\Phi(e^{-i\lambda})\Phi^*(e^{-i\lambda})\in \mathcal{D}_g,
\end{align*}
or of the constrained optimization problem
\be\langle \Omega_{\tau}D^{\tau}\me a_{\tau},\Omega_{\tau}D^{\tau}\me a_{\tau}\rangle
 -\langle\overline{\psi}_{\tau}\me C_{\tau}^g ,\overline{\psi}_{\tau} \me C_{\tau}^g \rangle\rightarrow\sup, \label{minimax1g_f_n-n_c_fact}\ee
\begin{align*}
 f(\lambda)&=\Theta(e^{-i\lambda})\Theta^*(e^{-i\lambda})
 -\Omega(e^{-i\lambda})\Omega^*(e^{-i\lambda})\in \mathcal{D}_f,
 \\
 g(\lambda)&=\Omega(e^{-i\lambda})\Omega^*(e^{-i\lambda})\in \mathcal{D}_g,
 \end{align*}
where  $\Psi(e^{-i\lambda})\Theta(e^{-i\lambda})=E_M$. The minimax spectral characteristic $h^0=\vec h_{\tau}(f^0,g^0)$ is calculated by formula (\ref{spectr A_f_n-n_c_fact}) or (\ref{spectr2 A_f_n-n_c_fact}) if
$\vec h_{\tau}(f^0,g^0)\in H_{\mathcal{D}}$.
\end{lema}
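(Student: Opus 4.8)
The plan is to show that the triple $(h^0; f^0, g^0)$ is a saddle point of the mean square error functional $\Delta(h; f, g)$ on the set $H_{\md D}\times \md D$, since the two definitions above make clear that this is precisely what is needed for $(f^0,g^0)$ to be least favourable and for $h^0$ to be minimax. For a fixed spectral characteristic $h$, the quantity $\Delta(h; f, g)=\mt E|B\vec\xi-\widehat{B}\vec\xi|^2$ is represented by an integral whose integrand is quadratic in $h$ but enters linearly in the spectral densities $f$ and $g$; hence $(f,g)\mapsto\Delta(h;f,g)$ is an affine functional for each fixed $h$. Consequently the optimal error $(f,g)\mapsto\Delta(h(f,g);f,g)=\min_h\Delta(h;f,g)$ is an infimum of affine functionals, so it is concave on the convex class $\md D$, while the global domination $\Delta(h;f,g)\geq\Delta(h(f,g);f,g)$ holds for every admissible pair of densities.

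First I would fix $(f^0,g^0)$ and take $h^0=\vec h_\tau(f^0,g^0)$ to be the optimal spectral characteristic produced by Theorem \ref{thm3_f_n-n_c_fact} for these densities, given in factorized form by (\ref{spectr A_f_n-n_c_fact}) or (\ref{spectr2 A_f_n-n_c_fact}). By the orthogonal projection property from which that theorem is derived, $h^0$ minimizes $\Delta(\cdot; f^0,g^0)$, which yields the first saddle inequality $\Delta(h; f^0,g^0)\geq\Delta(h^0; f^0,g^0)$ for all $h\in H_{\md D}$ and simultaneously identifies $\Delta(h^0;f^0,g^0)$ with the explicit value (\ref{poh A_f_n_d_fact}) or (\ref{poh A2_f_n-n_d_fact}), i.e.\ with $\|\widetilde\Phi_\tau D^\tau\me a\|^2-\|\overline\psi_\tau\me C^{f}_{\tau}\|^2$ (respectively its $g$-analogue). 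The hypothesis of the lemma is exactly that the factorization coefficients $\{\psi^0(k),\phi^0(k),\omega^0(k)\}$ of $(f^0,g^0)$ maximize this value over $\md D$, that is, that $(f^0,g^0)$ solves the constrained problem (\ref{minimax1f_f_n-n_c_fact}) or (\ref{minimax1g_f_n-n_c_fact}).

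It remains to derive the second saddle inequality $\Delta(h^0; f^0,g^0)\geq\Delta(h^0; f,g)$ for all $(f,g)\in\md D$. Here I would combine the three facts just assembled: the affine functional $\psi(f,g):=\Delta(h^0;f,g)$ dominates the concave functional $\varphi(f,g):=\Delta(h(f,g);f,g)$ everywhere, the two agree at $(f^0,g^0)$, and $\varphi$ attains its maximum over $\md D$ at $(f^0,g^0)$ by the constrained-optimization hypothesis. Since $\psi$ is affine and touches the concave $\varphi$ from above at the maximizer, $\psi$ is a supporting functional of $\varphi$ there, and the global domination $\psi\geq\varphi$ forces its slope to be a supergradient of $\varphi$ at $(f^0,g^0)$; the variational condition for the constrained maximum of $\varphi$ then transfers to $\psi$, giving $\psi(f,g)\leq\psi(f^0,g^0)$ on $\md D$. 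Together with the first inequality this produces the saddle point, hence the least favourability of $(f^0,g^0)$ and the minimax property of $h^0$, the latter precisely under the stated proviso $\vec h_\tau(f^0,g^0)\in H_{\md D}$, which guarantees that $h^0$ is an admissible estimate for every density in the class.

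The main obstacle I expect is this last step: turning the tangency of the affine $\Delta(h^0;\cdot)$ to the concave $\Delta(h(\cdot);\cdot)$ into genuine global maximality over $\md D$ rather than mere stationarity. This is where the convexity of $\md D$, the affineness of $\Delta(h^0;\cdot)$ in the densities, and the validity of the factorizations (\ref{fakt1_f_n-n_c}), (\ref{fakt3_f_n-n_c}), (\ref{fakt6_f_n-n_c}) together with the minimality condition must be used carefully; in particular one must ensure that the factorizations of $f^0$, $g^0$ and $f^0+g^0$ remain well defined along admissible variations, so that the envelope inequality and the identification of the error with (\ref{poh A_f_n_d_fact})/(\ref{poh A2_f_n-n_d_fact}) are preserved throughout the class.
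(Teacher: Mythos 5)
Your overall route is the right one and matches the paper's intent: the paper offers no separate proof of this lemma, observing only that it follows from the definitions together with the MSE formulas of Theorem \ref{thm3_f_n-n_c_fact}. Indeed, the first assertion (least favourability) is essentially a restatement of the definition, since by (\ref{poh A_f_n_d_fact}) and (\ref{poh A2_f_n-n_d_fact}) the quantity being maximised in (\ref{minimax1f_f_n-n_c_fact}) or (\ref{minimax1g_f_n-n_c_fact}) \emph{is} $\Delta(h(f,g);f,g)$ written in terms of the factorization coefficients; you identify this correctly. The minimax property of $h^0$ is the part that needs the saddle-point machinery, which the paper sketches only after the lemmas, so your attempt to supply the second saddle inequality is welcome.

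However, your derivation of that inequality has a genuine gap. You argue: $\psi:=\Delta(h^0;\cdot,\cdot)$ is affine, dominates the concave value function $\varphi:=\Delta(h(\cdot,\cdot);\cdot,\cdot)$, touches it at $(f^0,g^0)$, and $(f^0,g^0)$ maximises $\varphi$ over $\md D$; hence (you claim) $\psi$ is also maximised there. This implication is false in general: take $\md D=[0,1]$, $\varphi(x)=-x$ (concave, maximised at $x^0=0$) and $\psi(x)=x$; then $\psi\geq\varphi$ on $\md D$, $\psi(0)=\varphi(0)$, the slope of $\psi$ is a perfectly good supergradient of $\varphi$ at $0$ relative to $\md D$, yet $\psi$ attains its maximum at $x=1$, not at $x^0$. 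A supporting affine functional at a constrained maximiser of a concave function need not itself be maximised there, because the supergradient need not lie in the normal cone. What actually closes the argument is the envelope (Danskin-type) identity specific to $\psi=\Delta(h(f^0,g^0);\cdot,\cdot)$: setting $f_\alpha=(1-\alpha)f^0+\alpha f$, $g_\alpha=(1-\alpha)g^0+\alpha g$, one has
\begin{equation*}
\varphi(f^0,g^0)\;\geq\;\varphi(f_\alpha,g_\alpha)\;\geq\;(1-\alpha)\,\varphi(f^0,g^0)+\alpha\,\Delta\bigl(h(f_\alpha,g_\alpha);f,g\bigr),
\end{equation*}
whence $\Delta(h(f_\alpha,g_\alpha);f,g)\leq\varphi(f^0,g^0)$ for all $\alpha\in(0,1]$, and letting $\alpha\to0$ (using continuity of the optimal spectral characteristic along the segment, which is where conditions (\ref{umova11_f_n-n_c}) and the persistence of the factorizations enter) yields $\Delta(h^0;f,g)\leq\Delta(h^0;f^0,g^0)$. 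Equivalently: the directional derivative of $\varphi$ at $(f^0,g^0)$ equals that of $\psi$ because $h^0$ is the minimiser realising the envelope, and only then does the first-order optimality of $\varphi$ transfer to the affine $\psi$. Your write-up replaces this identification of derivatives by mere domination-plus-tangency, which is not enough.
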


\begin{lema}The spectral density $f^0\in\mathcal{D}_f$ which admits the canonical factorizations (\ref{fakt1_f_n-n_c}), (\ref{fakt3_f_n-n_c}) with a known regular spectral density $g(\lambda)$ is the least favourable spectral density in the class
 $\md D_f$ for the optimal
linear filtering of the functional $A\xi$ based on observations of the process $\xi(t)+\eta(t)$ at points $t\leq0$ if the matrix coefficients $\{\psi^0(k): k\geq0\}$
from the canonical factorization
\be \label{fakt2_lf_f_n-n_c}
 (f^0(\lambda)+g(\lambda))^{-1} =
 \ld(\sum_{k=0}^{\infty}\psi^0(k)e^{-i\lambda k}\rd)^*
 \ld(\sum_{k=0}^{\infty}\psi^0(k)e^{-i\lambda k}\rd),\ee
determine a solution of the constrained optimization problem
\be
 \ld\langle \overline{\psi}_{\tau}\me C_{\tau}^g,\overline{\psi}_{\tau}\me C_{\tau}^g\rd\rangle\rightarrow\inf, \label{minimax3f_f_n-n_c_fact}\ee
\[
 f(\lambda)=\Theta(e^{-i\lambda})\Theta^*(e^{-i\lambda})
 -g(\lambda)\in \mathcal{D}_f,\]
 where  $\Psi(e^{-i\lambda})\Theta(e^{-i\lambda})=E_M$. The minimax spectral characteristic $h^0=\vec h_{\tau}(f^0,g)$ is calculated by formula (\ref{spectr2 A_f_n-n_c_fact}) if
$\vec h_{\tau}(f^0,g)\in H_{\mathcal{D}}$.
\end{lema}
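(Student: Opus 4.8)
The plan is to reduce the statement to the mean square error formula of Theorem \ref{thm3_f_n-n_c_fact} and then invoke the standard saddle-point characterisation of a least favourable pair. Since here the noise spectral density $g(\lambda)$ is a fixed, known regular density, I would work with the factorisation (\ref{fakt1_f_n-n_c}) of $f+g$ and the factorisation (\ref{fakt6_f_n-n_c}) of $g$, so that the error of the optimal estimate is given by (\ref{poh A2_f_n-n_d_fact}),
\[
 \Delta(f,g;\widehat{A}\xi)=\|\widetilde{\Omega}_{\tau}D^{\tau}\me a\|^2-\|\overline{\psi}_{\tau} \me C_{\tau}^{g}\|^2.
\]
The key observation is that the first term $\|\widetilde{\Omega}_{\tau}D^{\tau}\me a\|^2$ is built only from the factorisation coefficients $\omega_{\tau}(k)$ of the fixed density $g$ and from the known coefficients $\me a$; it is therefore constant over the whole class $\md D_f$. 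Likewise $\me c_{\tau}^{g}(m)=(\widetilde{\Omega}_{\tau}^*\widetilde{\Omega}_{\tau}D^{\tau}\me a)_m$ does not depend on $f$, so the entire $f$-dependence of the error is carried by the factor $\overline{\psi}_{\tau}$ inside the second term, that is, by the coefficients $\psi_{\tau}(k)$ of $(f+g)^{-1}$.

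First I would unfold the definition of a least favourable density: $f^0$ is least favourable in $\md D_f$ precisely when it maximises the minimal error $\Delta(f,g)=\min_h\Delta(h;f,g)$, which by the display above equals $\|\widetilde{\Omega}_{\tau}D^{\tau}\me a\|^2-\langle\overline{\psi}_{\tau}\me C_{\tau}^{g},\overline{\psi}_{\tau}\me C_{\tau}^{g}\rangle$. Because the leading term is constant on $\md D_f$, maximising $\Delta(f,g)$ is equivalent to minimising $\langle\overline{\psi}_{\tau}\me C_{\tau}^{g},\overline{\psi}_{\tau}\me C_{\tau}^{g}\rangle$, which is exactly the objective (\ref{minimax3f_f_n-n_c_fact}). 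To express the admissibility constraint in terms of the factorisation data I would use Remark \ref{zauv_operator_U_f_n-n_c}: writing $f^0+g=\Theta(e^{-i\lambda})\Theta^*(e^{-i\lambda})$ with $\Psi(e^{-i\lambda})\Theta(e^{-i\lambda})=E_M$, the condition $f\in\md D_f$ becomes $f=\Theta\Theta^*-g\in\md D_f$, and the coefficients $\psi^0(k)$ of (\ref{fakt2_lf_f_n-n_c}) are exactly the reciprocal-factor coefficients associated with $\Theta$. This identifies the minimiser of the constrained problem (\ref{minimax3f_f_n-n_c_fact}) with the least favourable density $f^0$.

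It then remains to confirm the minimax property of $h^0=\vec h_{\tau}(f^0,g)$. The estimate $h^0$ computed by (\ref{spectr2 A_f_n-n_c_fact}) is, by Theorem \ref{thm3_f_n-n_c_fact}, the error-minimising estimate for the particular density $f^0$, so $\Delta(h^0;f^0,g)=\Delta(f^0,g)$. Provided $\vec h_{\tau}(f^0,g)\in H_{\md D}$, i.e. $h^0$ is an admissible estimate for every density in the class, the pair $(f^0,h^0)$ forms a saddle point of $\Delta(h;f,g)$: for every admissible $h$ one has $\Delta(h^0;f^0,g)\le\Delta(h;f^0,g)$ by optimality of $h^0$ for $f^0$, while $\Delta(h^0;f,g)\le\Delta(h^0;f^0,g)$ for every $f\in\md D_f$ because $f^0$ maximises the error over the class. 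Chaining these inequalities with the elementary bound $\min_{h}\sup_{f}\Delta\ge\sup_{f}\min_{h}\Delta=\Delta(f^0,g)$ yields $\min_{h\in H_{\md D}}\sup_{f\in\md D_f}\Delta(h;f,g)=\Delta(h^0;f^0,g)$, which is precisely the defining property of the minimax characteristic.

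The main obstacle I anticipate is the saddle-point step, and specifically the inequality $\Delta(h^0;f,g)\le\Delta(h^0;f^0,g)$ for all $f\in\md D_f$. This is not automatic from the optimisation (\ref{minimax3f_f_n-n_c_fact}) alone: one must show that the stationarity condition for (\ref{minimax3f_f_n-n_c_fact}), together with the convexity structure of $\md D_f$, forces $f^0$ to maximise the error of the \emph{fixed} estimate $h^0$ and not merely the minimal error $\Delta(f,g)$. The hypothesis $\vec h_{\tau}(f^0,g)\in H_{\md D}$ is exactly the admissibility condition ensuring that $h^0$ is a legitimate competitor in the inner minimisation for every density of the class, and it is what closes the argument.
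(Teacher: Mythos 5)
Your argument is correct and is essentially the one the paper intends: the paper states this lemma without a separate proof, asserting it follows from the MSE formula (\ref{poh A2_f_n-n_d_fact}) of Theorem \ref{thm3_f_n-n_c_fact} together with Remark \ref{zauv_operator_U_f_n-n_c}, and your key observation --- that with $g$ fixed the term $\|\widetilde{\Omega}_{\tau}D^{\tau}\me a\|^2$ and the vector $\me C_{\tau}^{g}$ are constant on $\md D_f$, so maximising the error reduces to minimising $\langle\overline{\psi}_{\tau}\me C_{\tau}^{g},\overline{\psi}_{\tau}\me C_{\tau}^{g}\rangle$ over the factorisation coefficients of $(f+g)^{-1}$ --- is exactly the intended reduction. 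Your concluding saddle-point discussion, including the role of the hypothesis $\vec h_{\tau}(f^0,g)\in H_{\mathcal{D}}$, matches the paper's own treatment immediately following these lemmas.
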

\begin{lema}
  The spectral density $g^0\in\mathcal{D}_g$ which admits the canonical factorizations (\ref{fakt2_f_n-n_c}), (\ref{fakt6_f_n-n_c}) with a known regular spectral density $f(\lambda)$ is the least favourable spectral density in the class
 $\md D_g$ for the optimal
linear filtering of the functional $A\xi$ based on observations of the process $\xi(t)+\eta(t)$ at points $t\leq0$ if the matrix coefficients $\{\psi^0(k): k\geq0\}$
from the canonical factorization
\be \label{fakt2_lg_f_n-n_c}
 (f(\lambda)+g^0(\lambda))^{-1} =
 \ld(\sum_{k=0}^{\infty}\psi^0(k)e^{-i\lambda k}\rd)^*
 \ld(\sum_{k=0}^{\infty}\psi^0(k)e^{-i\lambda k}\rd),
 \ee
determine a solution of the constrained optimization problem
\be
 \ld\langle\overline{\psi}_{\tau}\me C_{\tau}^f,\overline{\psi}_{\tau}\me C_{\tau}^f\rd\rangle\rightarrow\inf, \label{minimax3g_f_n-n_c_fact}\ee
\[
 g(\lambda)=
\Theta(e^{-i\lambda})\Theta^*(e^{-i\lambda})
-f(\lambda)\in \mathcal{D}_g
\]
 where  $\Psi(e^{-i\lambda})\Theta(e^{-i\lambda})=E_M$. The minimax spectral characteristic $h^0=\vec h_{\tau}(f,g^0)$ is calculated by formula (\ref{spectr A_f_n-n_c_fact}) if
$\vec h_{\tau}(f,g^0)\in H_{\mathcal{D}}$.
\end{lema}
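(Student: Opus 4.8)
The plan is to treat this statement as the exact dual of the preceding lemma: there the noise density $g$ was fixed and the signal density varied, whereas now $f$ is the fixed, regular, known density and $g$ is the parameter to be optimised. Accordingly I would use the form of the mean square error in Theorem \ref{thm3_f_n-n_c_fact} that keeps the factorization of the \emph{fixed} density explicit. Since $f$ is regular it admits factorization (\ref{fakt3_f_n-n_c}), and $f+g^{0}$ admits (\ref{fakt1_f_n-n_c})--(\ref{fakt2_f_n-n_c}), so the theorem yields
\[
\Delta(f,g;\widehat{A}\xi)=\|\widetilde{\Phi}_{\tau}D^{\tau}\me a\|^2-\|\overline{\psi}_{\tau}\me C^{f}_{\tau}\|^2 .
\]
The first thing I would record is that the operator $\widetilde{\Phi}_{\tau}$ is assembled from the coefficients $\phi_{\tau}(k)$ of the fixed density $f$, while $\me C^{f}_{\tau}=\widetilde{\Phi}^{*}_{\tau}\widetilde{\Phi}_{\tau}D^{\tau}\me a$ is built from those same coefficients together with the fixed functional data $D^{\tau}\me a$; hence both $\|\widetilde{\Phi}_{\tau}D^{\tau}\me a\|^2$ and $\me C^{f}_{\tau}$ are independent of $g$. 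The only $g$-dependence sits in $\overline{\psi}_{\tau}$, coming from the factorization of $(f+g)^{-1}$. Therefore maximising the error over $\md D_g$ is equivalent to minimising $\|\overline{\psi}_{\tau}\me C^{f}_{\tau}\|^2$, which is precisely the objective of the constrained problem (\ref{minimax3g_f_n-n_c_fact}).

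Next I would reparametrise the admissible noise densities through their factorizations, exactly as in Remark \ref{zauv_operator_U_f_n-n_c}. Writing $f+g=\Theta\Theta^{*}$ and letting $\psi$ be determined by $\Psi\Theta=E_M$, the whole dependence of $\overline{\psi}_{\tau}$ on the noise is carried by the factor $\Theta$ (equivalently by its twisted version $\Theta_{\tau}=w_{\tau}\Theta$). Because $f$ is held fixed, every noise density satisfying the factorization hypothesis is recovered as $g=\Theta\Theta^{*}-f$, and the membership requirement $g\in\md D_g$ turns into exactly the side constraint attached to (\ref{minimax3g_f_n-n_c_fact}). Consequently the density $g^{0}$ whose coefficients $\psi^{0}$ solve the factorization (\ref{fakt2_lg_f_n-n_c}) and minimise $\|\overline{\psi}_{\tau}\me C^{f}_{\tau}\|^2$ attains $\Delta(f,g^{0})=\max_{g\in\md D_g}\Delta(f,g)$, i.e. it is least favourable in $\md D_g$.

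Finally I would close the minimax statement with the standard saddle-point argument tied to the definition of the minimax-robust characteristic. The estimate optimal for the least favourable pair $(f,g^{0})$ has spectral characteristic $\vec h_{\tau}(f,g^{0})$ given by (\ref{spectr A_f_n-n_c_fact}); once one knows that $\vec h_{\tau}(f,g^{0})\in H_{\md D}$, this characteristic is an admissible estimate for every pair in the class, so
\[
\sup_{(f,g)\in\md D}\Delta(\vec h_{\tau}(f,g^{0});f,g)=\Delta(f,g^{0})=\min_{h\in H_{\md D}}\max_{(f,g)\in\md D}\Delta(h;f,g),
\]
which is the required minimax-robustness identity. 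I expect the bookkeeping (verifying the genuine $g$-independence of $\widetilde{\Phi}_{\tau}$ and $\me C^{f}_{\tau}$, and that the map $g=\Theta\Theta^{*}-f$ ranges over exactly the factorizable members of $\md D_g$) to be routine; the one genuinely delicate point is the membership $\vec h_{\tau}(f,g^{0})\in H_{\md D}$, on which the passage from \emph{least favourable density} to \emph{minimax characteristic} rests, and this is precisely the hypothesis imposed in the statement.
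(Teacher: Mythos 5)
Your argument is correct and follows the route the paper intends: this lemma is stated in the paper without an explicit proof (it is one of those asserted to "hold true" by the relations of Sections 2--3), and your verification --- that in the MSE formula (\ref{poh A_f_n_d_fact}) of Theorem \ref{thm3_f_n-n_c_fact} both $\|\widetilde{\Phi}_{\tau}D^{\tau}\me a\|^2$ and $\me C^{f}_{\tau}=\widetilde{\Phi}^{*}_{\tau}\widetilde{\Phi}_{\tau}D^{\tau}\me a$ depend only on the fixed regular $f$ and on $\me a$, so that maximizing the error over the noise density reduces to minimizing $\|\overline{\psi}_{\tau}\me C^{f}_{\tau}\|^2$ under the reparametrization $g=\Theta\Theta^{*}-f$ --- is exactly the intended verification. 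The only slip is notational: in your closing display the supremum should run over $g\in\md D_g$ with $f$ held fixed, not over $(f,g)\in\md D$.
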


The least favorable spectral densities $(f^0,g^0)$ and the function $h^0$ form a saddle point of the function
$\Delta(h;f,g)$ on the set $H_{\mathcal{D}}\times\mathcal{D}$.
The saddle point inequalities
\[\Delta(h;f^0,g^0)\geq\Delta(h^0;f^0,g^0)\geq\Delta(h^0;f,g)
\quad\forall f\in \mathcal{D}_f,\forall g\in \mathcal{D}_g,\forall h\in H_{\mathcal{D}}\]
 hold true if
$h^0=\vec h_{\tau}(f^0,g^0)$ and $\vec h_{\tau}(f^0,g^0)\in H_{\mathcal{D}}$,
where $(f^0,g^0)$ is a solution of the constrained optimization problem
\be
 \widetilde{\Delta}(f,g)=-\Delta(\vec h_{\tau}(f^0,g^0);f,g)\to \inf,\quad (f,g)\in \mathcal{D}.\label{zad_zum_extr_f_f_n-n_c}
\ee
Here the functional $\Delta(\vec h_{\tau}(f^0,g^0);f,g)$ is calculated by the formula
\begin{align*}
\Delta(\vec h_{\tau}(f^0,g^0);f,g)
&=\frac{1}{2\pi}\int_{-\pi}^{\pi}
\frac{\lambda^{2d}}{|1-e^{i\lambda\tau}|^{2d}}
(\me C^{f0}_{\tau}(e^{i\lambda}))^{\top}(f^0(\lambda)+g^0(\lambda))^{-1}f(\lambda)
\\
&\quad\quad\times
(f^0(\lambda)+g^0(\lambda))^{-1}
\overline{\me C^{f0}_{\tau}(e^{i\lambda})}
d\lambda
\\
&\quad+\frac{1}{2\pi}\int_{-\pi}^{\pi}
\frac{\lambda^{2d}}{|1-e^{i\lambda\tau}|^{2d}}
(\me C^{g0}_{\tau}(e^{i\lambda}))^{\top}(f^0(\lambda)+g^0(\lambda))^{-1} g(\lambda)
\\
&\quad\quad\times
(f^0(\lambda)+g^0(\lambda))^{-1}
\overline{\me C^{g0}_{\tau}(e^{i\lambda})}
d\lambda,
\end{align*}
where
\begin{align*}
(\me C^{f0}_{\tau}(e^{i\lambda}))^{\top}
&:=
(\vec B_{\tau}(e^{-i\lambda}))^{\top}\frac{|1-e^{i\lambda
 \tau}|^{2d}}{\lambda^{2d}}g^0(\lambda)+(\vec C_{\tau}^0(e^{i\lambda}))^{\top},
\\
({\me C}^{g0}_{\tau}(e^{i \lambda}))^{\top}
&:=
(\vec B_{\tau}(e^{-i\lambda}))^{\top}\frac{|1-e^{i\lambda
 \tau}|^{2d}}{\lambda^{2d}}f^0(\lambda)--(\vec C_{\tau}^0(e^{i\lambda}))^{\top}.
\end{align*}
and
\[
\vec C_{\tau}^0(e^{i\lambda})=\sum_{j=0}^{\infty}((\me
P_{\tau}^0)^{-1}\me R^0 D^{\tau}\me a)_je^{i\lambda
(j+1)},\]
or, taking into account (\ref{formula_11_f_n-n_c}), it is calculated by the formula
\begin{align*}
&\Delta(\vec h_{\tau}(f^0,g^0);f,g)=
\\&=\frac{1}{2\pi}\ip
(\me h^0_{\tau,g}(e^{-i\lambda}))^{\top}\Psi^0(e^{-i\lambda })f(\lambda)
(\Psi^0(e^{-i\lambda }))^*\overline{\me h^0_{\tau,g}(e^{-i\lambda})}d\lambda
\\
&\quad+
\frac{1}{2\pi}\ip
(\me h^0_{\tau,f}(e^{-i\lambda}))^{\top}\Psi^0(e^{-i\lambda })g(\lambda)
(\Psi^0(e^{-i\lambda }))^*\overline{\me h^0_{\tau,f}(e^{-i\lambda})}d\lambda,
\end{align*}
where
\begin{align*}
 \me h^0_{\tau,g}(e^{-i\lambda})&=\sum_{j=0}^{\infty}( \overline{\psi}^0_{\tau}(\me C^g_{\tau})^0 )_j e^{-i\lambda j}.
 \\
 \me h^0_{\tau,f}(e^{-i\lambda})&=\sum_{j=0}^{\infty}(\overline{\psi}^0_{\tau} (\me C^f_{\tau})^0 )_j e^{-i\lambda j},
 \end{align*}

The constrained optimization problem \eqref{zad_zum_extr_f_f_n-n_c} is equivalent to the unconstrained optimization problem
\be \label{zad_unconst_extr_f_st_d}
 \Delta_{\mathcal{D}}(f,g)=\widetilde{\Delta}(f,g)+ \delta(f,g|\mathcal{D})\to\inf,\ee
 where $\delta(f,g|\mathcal{D})$ is the indicator function of the set
$\mathcal{D}$, namely $\delta(f,g|\mathcal{D})=0$ if $(f,g)\in \mathcal{D}$ and $\delta(f,g|\mathcal{D})=+\infty$ if $(f,g)\notin \mathcal{D}$.
The condition
 $0\in\partial\Delta_{\mathcal{D}}(f^0,g^0)$, where
$\partial\Delta_{\mathcal{D}}(f^0,g^0)$ is the subdifferential of the functional $\Delta_{\mathcal{D}}(f,g)$ at point $(f^0,g^0)\in \mathcal{D}=\mathcal{D}_f\times\mathcal{D}_g$,
 characterizes a solution $(f^0,g^0)$ of the stated unconstrained optimization problem.
 This condition is the necessary and sufficient condition under which the point $(f^0,g^0)$ belongs to the set of minimums of the convex functional $\Delta_{\mathcal{D}}(f,g)$ \cite{Moklyachuk2015,Rockafellar}.
 Thus, it allows us to find the equations which determine the least favourable spectral densities in some special classes of spectral densities $\md D$.

The form of the functional $\Delta(\vec h_{\overline{\mu}}(f^0,g^0);f,g)$ is suitable for application of the Lagrange method of indefinite
multipliers to the constrained optimization problem \eqref{zad_zum_extr_f_f_n-n_c}.
Thus, the complexity of the problem is reduced to  finding the subdifferential of the indicator function of the set of admissible spectral densities. We illustrate the solving of the problem \eqref{zad_unconst_extr_f_st_d} for concrete sets admissible spectral densities  in the following subsections.

\subsection{Least favorable spectral densities in the class $\md D_f^0\times\md D_g^0$}
\label{subsec_set_D0_f_n-n_c}

 Consider   the minimax (robust) filtering problem  for the functional $A\xi$ in case of two uncorrelated stochastic  processes $\xi(t)$ and $\eta(t)$ with   periodically stationary increments
under the condition that the spectral densities belong to the sets of admissible spectral densities $\mathcal{D}=\md D_0^k\times\md D_{\varepsilon}^{k}$
where
$\md D_0^k$, $k=1,2,3,4$, are
\begin{align*}
 \md D_{0}^{1} &=\bigg\{f(\lambda )\bigg|\frac{1}{2\pi} \int_{-\pi}^{\pi}
f(\lambda )d\lambda  =P \bigg\},
\\
\md D_{0}^{2} &=\bigg\{f(\lambda )\bigg|\frac{1}{2\pi }
\int_{-\pi}^{\pi}
{\rm{Tr}}\,[ f(\lambda )]d\lambda =p \bigg\},
\\
\md D_{0}^{3} &=\bigg\{f(\lambda )\bigg|\frac{1}{2\pi}
\int_{-\pi}^{\pi}
f_{kk} (\lambda )d\lambda =p_{k}, k=\overline{1,\infty} \bigg\},
\\
\md D_{0}^{4} &=\bigg\{f(\lambda )\bigg|\frac{1}{2\pi} \int_{-\pi}^{\pi}
\left\langle B_{1} ,f(\lambda )\right\rangle d\lambda =p \bigg\},
\end{align*}

\noindent
where  $p, p_k, k=\overline{1,\infty}$ are given numbers, $P, B_1$ are given positive-definite Hermitian matrices, and
$\md D_{\varepsilon}^{k}$, $k=1,2,3,4$ are

\begin{align*}
\md D_{\varepsilon }^{1}  &=\bigg\{g(\lambda )\bigg|{\mathrm{Tr}}\,
[g(\lambda )]=(1-\varepsilon ) {\mathrm{Tr}}\,  [g_{1} (\lambda
)]+\varepsilon {\mathrm{Tr}}\,  [W(\lambda )],
\frac{1}{2\pi} \int_{-\pi}^{\pi}
{\mathrm{Tr}}\,
[g(\lambda )]d\lambda =q \bigg\};
\\
\md D_{\varepsilon }^{2}  &=\bigg\{g(\lambda )\bigg|g_{kk} (\lambda)
=(1-\varepsilon )g_{kk}^{1} (\lambda )+\varepsilon w_{kk}(\lambda),
\\&\quad\quad\quad\quad\quad\quad\quad\quad\quad\quad\quad\quad\quad
 \quad\quad\quad\quad\quad
\frac{1}{2\pi} \int_{-\pi}^{\pi}
g_{kk} (\lambda)d\lambda  =q_{k} , k=\overline{1,\infty}\bigg\};
\\
\md D_{\varepsilon }^{3} &=\bigg\{g(\lambda )\bigg|\left\langle B_{2},g(\lambda )\right\rangle =(1-\varepsilon )\left\langle B_{2},g_{1} (\lambda )\right\rangle+\varepsilon \left\langle B_{2},W(\lambda )\right\rangle,
\\&\quad\quad\quad\quad\quad\quad\quad\quad\quad\quad\quad\quad\quad
 \quad\quad\quad\quad\quad\quad\quad
\frac{1}{2\pi}\int_{-\pi}^{\pi}
\left\langle B_{2} ,g(\lambda )\right\rangle d\lambda =q\bigg\};
\\
\md D_{\varepsilon }^{4}&=\bigg\{g(\lambda )
\bigg|g(\lambda)=(1-\varepsilon )g_{1} (\lambda )+\varepsilon W(\lambda ),
\frac{1}{2\pi } \int_{-\pi}^{\pi}
g(\lambda )d\lambda=Q\bigg\},
\end{align*}

\noindent
where  $g_{1} ( \lambda )$ is a fixed spectral density, $W(\lambda)$ is an unknown spectral density, $q, q_k,k=\overline{1,\infty}$, are given numbers, $Q$ is a given positive-definite Hermitian matrix.

 The condition
$0\in\partial\Delta_{\md D}(f^0,g^0)$ implies that the least favourable spectral densities $f^0(\lambda)\in\md D_f^0$ and
$g^0(\lambda)\in\md D_g^0$ satisfy the equations described below for each pair of the sets of admissible spectral densities.

For the first pair of the sets of admissible spectral densities $\md D_{0}^1\times \md D_{\varepsilon} ^{1}$, we have equations
\begin{multline}  \label{eq_4_1_f_n-n_c}
\frac{\lambda^{2d}}{|1-e^{i\lambda\tau}|^{2d}}
\left(
\me C^{f0}_{\tau}(e^{i\lambda})
\right)
\left(
\me C^{f0}_{\tau}(e^{i\lambda})\right)^{*}
=
\\
=
   \ld(f^0(\lambda)+ g^0(\lambda)\rd)
\vec{\alpha}
\cdot
\vec{\alpha}^{*}
  \ld(f^0(\lambda)+ g^0(\lambda)\rd),
\end{multline}
\begin{multline}\label{eq_5_1_f_n-n_c}
\frac{\lambda^{2d}}{|1-e^{i\lambda\tau}|^{2d}}
\left(
\me C^{g0}_{\tau}(e^{i\lambda})
\right)
\left(
\me C^{g0}_{\tau}(e^{i\lambda})
\right)^{*}
=
(\alpha^{2} +\gamma_1(\lambda ))
\left(
  f^0(\lambda)+ g^0(\lambda)
\right)^2,
\end{multline}

\noindent where $\alpha^{2}$, $\vec{\alpha}$ are Lagrange multipliers,  the function $\gamma_1(\lambda )\le 0$ and $\gamma_1(\lambda )=0$ if ${\mathrm{Tr}}\,[g^{0} (\lambda )]>(1-\varepsilon ) {\mathrm{Tr}}\, [g_{1} (\lambda )]$.

For the second pair of the sets of admissible spectral densities $\md D_{f0}^2\times \md D_{\varepsilon } ^{2}$, we have equation
\begin{equation}  \label{eq_4_2_f_n-n_c}
\frac{\lambda^{2d}}{|1-e^{i\lambda\tau}|^{2d}}
\left(
\me C^{f0}_{\tau}(e^{i\lambda})
\right)
\left(
\me C^{f0}_{\tau}(e^{i\lambda})\right)^{*}
=
\alpha^{2} \left(
  f^0(\lambda)+ g^0(\lambda)
\right)^{2},
\end{equation}
\begin{multline}   \label{eq_5_2_f_n-n_c}
\frac{\lambda^{2d}}{|1-e^{i\lambda\tau}|^{2d}}
\left(
\me C^{g0}_{\tau}(e^{i\lambda})
\right)
\left(
\me C^{g0}_{\tau}(e^{i\lambda})
\right)^{*}
=
\\
=
  (f^0(\lambda)+ g^0(\lambda))
\left\{(\alpha_{k}^{2} +\gamma_{k}^1 (\lambda ))\delta _{kl} \right\}_{k,l=1}^{\infty}
  (f^0(\lambda)+ g^0(\lambda)),
\end{multline}

\noindent where $\alpha^{2}$, $\alpha _{k}^{2}$ are Lagrange multipliers,  functions $\gamma_{k}^1(\lambda )\le 0$ and $\gamma_{k}^1 (\lambda )=0$ if $
g_{kk}^{0}(\lambda )>(1-\varepsilon )g_{kk}^{1} (\lambda )$.

For the third pair of the sets of admissible spectral densities $\md D_{f0}^3\times \md D_{\varepsilon }^{3}$, we have equation
\begin{multline}   \label{eq_4_3_f_n-n_c}
\frac{\lambda^{2d}}{|1-e^{i\lambda\tau}|^{2d}}
\left(
\me C^{f0}_{\tau}(e^{i\lambda})
\right)
\left(
\me C^{f0}_{\tau}(e^{i\lambda})\right)^{*}
=
\\
=
  (f^0(\lambda)+ g^0(\lambda))
\left\{\alpha _{k}^{2} \delta _{kl} \right\}_{k,l=1}^{\infty}
  (f^0(\lambda)+ g^0(\lambda)),
\end{multline}
\begin{multline}   \label{eq_5_3_f_n-n_c}
\frac{\lambda^{2d}}{|1-e^{i\lambda\tau}|^{2d}}
\left(
\me C^{g0}_{\tau}(e^{i\lambda})
\right)
\left(
\me C^{g0}_{\tau}(e^{i\lambda})
\right)^{*}
=
\\
=
\left(\alpha^{2} +\gamma_1'(\lambda )\right)
  (f^0(\lambda)+ g^0(\lambda))B_{2}^{ \top}\,
  (f^0(\lambda)+ g^0(\lambda)),
\end{multline}

\noindent where $\alpha _{k}^{2}$, $\alpha^{2}$ are Lagrange multipliers,  function $\gamma_1' ( \lambda )\le 0$ and $\gamma_1' ( \lambda )=0$ if $\langle B_{2} ,g^{0} ( \lambda ) \rangle>(1- \varepsilon ) \langle B_{2} ,g_{1} ( \lambda ) \rangle$,
 $\delta _{kl}$ are Kronecker symbols.

For the fourth pair of the sets of admissible spectral densities $\md D_{f0}^4\times \md D_{\varepsilon}^{4}$, we have equation
\begin{multline} \label{eq_4_4_f_n-n_c}
\frac{\lambda^{2d}}{|1-e^{i\lambda\tau}|^{2d}}
\left(
\me C^{f0}_{\tau}(e^{i\lambda})
\right)
\left(
\me C^{f0}_{\tau}(e^{i\lambda})\right)^{*}
=
\\
=
\alpha^{2}   (f^0(\lambda)+ g^0(\lambda))
B_{1}^{\top}
   (f^0(\lambda)+ g^0(\lambda)),
\end{multline}
\begin{multline}  \label{eq_5_4_f_n-n_c}
\frac{\lambda^{2d}}{|1-e^{i\lambda\tau}|^{2d}}
\left(
\me C^{g0}_{\tau}(e^{i\lambda})
\right)
\left(
\me C^{g0}_{\tau}(e^{i\lambda})
\right)^{*}
=
\\
=
  (f^0(\lambda)+ g^0(\lambda))
(\vec{\alpha}\cdot \vec{\alpha}^{*}+\Gamma(\lambda))
  (f^0(\lambda)+ g^0(\lambda)),
\end{multline}

\noindent where $\alpha^{2}$, $\vec{\alpha}$ are Lagrange multipliers,  function $\Gamma(\lambda )\le 0$ and $\Gamma(\lambda )=0$ if $g^{0}(\lambda )>(1-\varepsilon )g_{1} (\lambda )$.

The following theorem  holds true.

\begin{thm}
The matrix functions
$f^0(\lambda)$ and
$g^0(\lambda)$ determined by the pairs of equations
(\ref{eq_4_1_f_n-n_c})--(\ref{eq_5_1_f_n-n_c}), (\ref{eq_4_2_f_n-n_c})--(\ref{eq_5_2_f_n-n_c}), (\ref{eq_4_3_f_n-n_c})--(\ref{eq_5_3_f_n-n_c}), (\ref{eq_4_4_f_n-n_c})--(\ref{eq_5_4_f_n-n_c})
and restrictions  on the spectral densities matrices from the corresponding classes $ \md  D_0^{k}\times{\md D_{\varepsilon }^{k}}$, $k=1,2,3,4$,
are the least
favourable in the class $ \md  D_0^{k}\times{\md D_{\varepsilon }^{k}}$, $k=1,2,3,4$, for the optimal linear
filtering of the functional $A\xi$
 if they satisfy minimality condition (\ref{umova11_f_n-n_c}) and
 determine a solution of constrained optimization problem (\ref{minimax1_f_n-n_c})
  The minimax-robust spectral characteristic $\vec h_{\tau}(f^0,g^0)$ of the optimal estimate of the functional $A{\xi}$ is determined by   formula (\ref{spectr A_f_n-n_c}).
\end{thm}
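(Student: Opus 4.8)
The plan is to reduce the statement to a single verification: that each pair $(f^0,g^0)$ singled out by the class restrictions together with the corresponding pair of equations satisfies the optimality condition $0\in\partial\Delta_{\md D}(f^0,g^0)$ for the unconstrained problem (\ref{zad_unconst_extr_f_st_d}). As recorded just above the theorem, convexity of $\Delta_{\md D}=\widetilde\Delta+\delta(\cdot\,|\md D)$ makes this condition necessary and sufficient for $(f^0,g^0)$ to solve the constrained problem (\ref{zad_zum_extr_f_f_n-n_c}), equivalently (\ref{minimax1_f_n-n_c}); once that is known, the saddle-point inequalities give least favourability and, provided $\vec h_\tau(f^0,g^0)\in H_{\md D}$, formula (\ref{spectr A_f_n-n_c}) produces the minimax-robust spectral characteristic. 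By the Moreau--Rockafellar sum rule the condition reads $-\nabla\widetilde\Delta(f^0,g^0)\in\partial\delta(f^0,g^0|\md D)$, i.e. minus the gradient of the smooth part lies in the normal cone of $\md D$ at $(f^0,g^0)$, so the whole proof amounts to computing both sides and matching them to (\ref{eq_4_1_f_n-n_c})--(\ref{eq_5_4_f_n-n_c}).

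First I would differentiate the smooth part. Because the characteristic $\vec h_\tau(f^0,g^0)$ is frozen, the functional $\Delta(\vec h_\tau(f^0,g^0);f,g)$ written out before the theorem is \emph{linear} in the pair $(f,g)$; hence its Gateaux derivative at $(f^0,g^0)$ is obtained by simply substituting the variations for $f$ and $g$ under the two integrals. Reading off the Hermitian kernels that pair against the variations, the $f$-component of the gradient is the integrand $\frac{\lambda^{2d}}{|1-e^{i\lambda\tau}|^{2d}}(f^0+g^0)^{-1}\me C^{f0}_{\tau}(\me C^{f0}_{\tau})^*(f^0+g^0)^{-1}$ (up to the transposition and conjugation conventions of the display), and the $g$-component is the same expression with $\me C^{f0}_{\tau}$ replaced by $\me C^{g0}_{\tau}$. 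Conjugating these identities by the factor $(f^0+g^0)$ on the left and right---legitimate since minimality (\ref{umova11_f_n-n_c}) guarantees invertibility of $f^0+g^0$---moves the $(f^0+g^0)^{-1}$ factors to the multiplier side and turns the gradient into precisely the left-hand sides $\frac{\lambda^{2d}}{|1-e^{i\lambda\tau}|^{2d}}\me C^{f0}_{\tau}(\me C^{f0}_{\tau})^*$ and $\frac{\lambda^{2d}}{|1-e^{i\lambda\tau}|^{2d}}\me C^{g0}_{\tau}(\me C^{g0}_{\tau})^*$ appearing in all four pairs of equations.

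Next I would compute the normal cone $\partial\delta(f^0,g^0|\md D)=\partial\delta(f^0|\md D_0^k)\times\partial\delta(g^0|\md D_\varepsilon^k)$ class by class; this supplies the right-hand sides. For the moment-type constraints $\md D_0^k$ the normal cone is a single frequency-independent Hermitian matrix, namely the Lagrange multiplier dual to the linear moment condition: the rank-one matrix $\vec\alpha\cdot\vec\alpha^{*}$ for $\md D_0^1$, the scalar $\alpha^2$ times the identity for the trace constraint $\md D_0^2$, the diagonal matrix $\{\alpha_k^2\delta_{kl}\}$ for $\md D_0^3$, and $\alpha^2B_1^{\top}$ for the weighted constraint $\md D_0^4$, each conjugated by $(f^0+g^0)$ through the chain rule. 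For the $\varepsilon$-contaminated noise classes $\md D_\varepsilon^k$ the admissible set carries, besides its normalisation, a pointwise contamination inequality ($g^0\ge(1-\varepsilon)g_1$ in the relevant scalar, trace, or weighted sense), so its subdifferential adds to the normalisation multiplier a nonpositive frequency-dependent term $\gamma(\lambda)\le0$ obeying complementary slackness, i.e. $\gamma(\lambda)=0$ where the contamination bound is inactive. Equating minus the gradient to these normal-cone elements and sandwiching by $(f^0+g^0)$ reproduces (\ref{eq_4_1_f_n-n_c})--(\ref{eq_5_1_f_n-n_c}) for $k=1$ and, by the same computation, (\ref{eq_4_2_f_n-n_c})--(\ref{eq_5_4_f_n-n_c}) for $k=2,3,4$.

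The main obstacle is the evaluation of the subdifferentials of the indicator functions for the $\varepsilon$-contamination classes: unlike the pure moment sets, each $\md D_\varepsilon^k$ is cut out by an equality \emph{and} a pointwise inequality, so one must justify that its normal cone is the sum of a constant multiplier and a nonpositive multiplier function with complementary slackness, and then check that the resulting $(f^0,g^0)$ are genuine nonnegative-definite kernel spectral densities still meeting (\ref{umova11_f_n-n_c}). A secondary but essential verification is $\vec h_\tau(f^0,g^0)\in H_{\md D}$, since only then does (\ref{spectr A_f_n-n_c}) yield the minimax characteristic; the remaining bookkeeping---passing from the derivative integrands to the sandwiched matrix equations---needs only the linearity of $\Delta(\vec h_\tau(f^0,g^0);f,g)$ in $(f,g)$ and the invertibility granted by the minimality condition.
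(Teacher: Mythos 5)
Your proposal is correct and follows essentially the same route as the paper: the paper derives equations (\ref{eq_4_1_f_n-n_c})--(\ref{eq_5_4_f_n-n_c}) precisely from the condition $0\in\partial\Delta_{\mathcal{D}}(f^0,g^0)$ for the unconstrained problem (\ref{zad_unconst_extr_f_st_d}), using the linearity of $\Delta(\vec h_{\tau}(f^0,g^0);f,g)$ in $(f,g)$ to read off the gradient kernels and the subdifferential of the indicator function (constant Lagrange multipliers for the moment constraints, plus nonpositive complementary-slackness multipliers for the $\varepsilon$-contamination inequalities), then invokes the saddle-point inequalities and the earlier lemma tied to (\ref{minimax1_f_n-n_c}) for least favourability. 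Your identification of the normal-cone computation for the classes $\md D_{\varepsilon}^{k}$ and the membership $\vec h_{\tau}(f^0,g^0)\in H_{\mathcal{D}}$ as the points requiring verification matches what the paper leaves implicit.
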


Let the spectral densities $f(\lambda)$ and $g(\lambda)$ admit the canonical factorizations (\ref{fakt1_f_n-n_c}), (\ref{fakt3_f_n-n_c}) and (\ref{fakt6_f_n-n_c}). Then equations (\ref{eq_4_1_f_n-n_c})--(\ref{eq_5_4_f_n-n_c}) for the least favourable spectral densities can be presented in terms of the coefficients of these canonical factorizations

For the   set $\md D_{0}^1\times \md D_{\varepsilon} ^{1}$  of admissible spectral densities, we have equations
\begin{align}  \label{eq_4_1_f_n-n_c_fact}
\left(\me h^0_{\tau,g}(e^{-i\lambda})\right)
\left(\me h^0_{\tau,g}(e^{-i\lambda})\right)^{*}
&=
    (\Theta^0(e^{-i\lambda}))^{\top}
\vec{\alpha}
\cdot
\vec{\alpha}^{*}
  \overline{\Theta^0(e^{-i\lambda})},
\\
\label{eq_5_1_f_n-n_c_fact}
\left(\me h^0_{\tau,f}(e^{-i\lambda})\right)
\left(\me h^0_{\tau,f}(e^{-i\lambda})\right)^{*}
&=
(\alpha^{2} +\gamma_1(\lambda ))
 (\Theta^0(e^{-i\lambda}))^{\top}
 \overline{\Theta^0(e^{-i\lambda})},
\end{align}
for the   set  $\md D_{0}^2\times \md D_{\varepsilon } ^{2}$ of admissible spectral densities, we have equation
\begin{align}  \label{eq_4_2_f_n-n_c_fact}
\left(\me h^0_{\tau,g}(e^{-i\lambda})\right)
\left(\me h^0_{\tau,g}(e^{-i\lambda})\right)^{*}
&=
\alpha^{2}  (\Theta^0(e^{-i\lambda}))^{\top}
\overline{\Theta^0(e^{-i\lambda})},
\\   \label{eq_5_2_f_n-n_c_fact}
\left(\me h^0_{\tau,f}(e^{-i\lambda})\right)
\left(\me h^0_{\tau,f}(e^{-i\lambda})\right)^{*}
&=
   (\Theta^0(e^{-i\lambda}))^{\top}
\left\{(\alpha_{k}^{2} +\gamma_{k}^1 (\lambda ))\delta _{kl} \right\}_{k,l=1}^{\infty}
  \overline{\Theta^0(e^{-i\lambda})},
\end{align}
for the   set $\md D_{0}^3\times \md D_{\varepsilon }^{3}$ of admissible spectral densities, we have equation
\begin{align}   \label{eq_4_3_f_n-n_c_fact}
\left(\me h^0_{\tau,g}(e^{-i\lambda})\right)
\left(\me h^0_{\tau,g}(e^{-i\lambda})\right)^{*}
&=
   (\Theta^0(e^{-i\lambda}))^{\top}
\left\{\alpha _{k}^{2} \delta _{kl} \right\}_{k,l=1}^{\infty}
  \overline{\Theta^0(e^{-i\lambda})},
\\   \label{eq_5_3_f_n-n_c_fact}
\left(\me h^0_{\tau,f}(e^{-i\lambda})\right)
\left(\me h^0_{\tau,f}(e^{-i\lambda})\right)^{*}
&=
\left(\alpha^{2} +\gamma_1'(\lambda )\right)
   (\Theta^0(e^{-i\lambda}))^{\top}B_{2}^{ \top}\,
  \overline{\Theta^0(e^{-i\lambda})},
\end{align}
for the   set $\md D_{0}^4\times \md D_{\varepsilon}^{4}$ of admissible spectral densities, we have equation
\begin{align} \label{eq_4_4_f_n-n_c_fact}
\left(\me h^0_{\tau,g}(e^{-i\lambda})\right)
\left(\me h^0_{\tau,g}(e^{-i\lambda})\right)^{*}
&=
\alpha^{2}    (\Theta^0(e^{-i\lambda}))^{\top}
B_{1}^{\top}
   \overline{\Theta^0(e^{-i\lambda})},
\\  \label{eq_5_4_f_n-n_c_fact}
\left(\me h^0_{\tau,f}(e^{-i\lambda})\right)
\left(\me h^0_{\tau,f}(e^{-i\lambda})\right)^{*}
&=
 (\Theta^0(e^{-i\lambda}))^{\top}
(\vec{\alpha}\cdot \vec{\alpha}^{*}+\Gamma(\lambda))
  \overline{\Theta^0(e^{-i\lambda})},
\end{align}

\noindent where the Lagrange multipliers $\alpha^{2}$, $\vec{\alpha}$,   $\alpha _{k}^{2}$, and  the function $\gamma_1(\lambda)$, $\gamma_{k}^1(\lambda)$, $\gamma_1'(\lambda)$, $\Gamma(\lambda)$ are defined in the same way as in equations (\ref{eq_4_1_f_n-n_c})--(\ref{eq_5_4_f_n-n_c}; $\delta _{kl}$ are Kronecker symbols.

The following theorem  holds true.

\begin{thm}
The matrix functions
$f^0(\lambda)$ and $g^0(\lambda)$ determined
by  canonical factorizations   (\ref{fakt231_f_n-n_c}), (\ref{fakt232_f_n-n_c}), (\ref{fakt233_f_n-n_c}) ,
 pairs of equations
(\ref{eq_4_1_f_n-n_c_fact})--(\ref{eq_5_1_f_n-n_c_fact}), (\ref{eq_4_2_f_n-n_c_fact})--(\ref{eq_5_2_f_n-n_c_fact}), (\ref{eq_4_3_f_n-n_c_fact})--(\ref{eq_5_3_f_n-n_c_fact}), (\ref{eq_4_4_f_n-n_c_fact})--(\ref{eq_5_4_f_n-n_c_fact})
and restrictions  on the spectral densities matrices from the corresponding classes $ \md  D_0^{k}\times{\md D_{\varepsilon }^{k}}$, $k=1,2,3,4$,
are the least
favourable in the class $ \md  D_0^{k}\times{\md D_{\varepsilon }^{k}}$, $k=1,2,3,4$, for the optimal linear
filtering of the functional $A\xi$
 if they satisfy minimality condition (\ref{umova11_f_n-n_c}) and determine a solution of the constrained optimization problem (\ref{minimax1f_f_n-n_c_fact}) (or  (\ref{minimax1g_f_n-n_c_fact})).
  The minimax-robust spectral characteristic $\vec h_{\tau}(f^0,g^0)$ of the optimal estimate of the functional $A{\xi}$ is determined by   formula (\ref{spectr A_f_n-n_c_fact}) (or (\ref{spectr2 A_f_n-n_c_fact})).
\end{thm}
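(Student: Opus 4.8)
The statement is the factorized counterpart of the preceding theorem, so the plan is to reuse the saddle-point framework assembled just before it and merely recast the extremum conditions in terms of the factorization coefficients. Recall that the least favourable pair $(f^0,g^0)$ has been characterized as a solution of the constrained problem $\widetilde\Delta(f,g)=-\Delta(\vec h_{\tau}(f^0,g^0);f,g)\to\inf$ over $\md D$, equivalently of the unconstrained problem \eqref{zad_unconst_extr_f_st_d} obtained by adding the indicator $\delta(f,g|\md D)$. Since $\widetilde\Delta$ is linear in $(f,g)$ (its kernel is frozen at the design densities $f^0,g^0$) and $\delta(\cdot|\md D)$ is convex, the functional $\Delta_{\md D}$ is convex and $(f^0,g^0)$ is a minimizer if and only if $0\in\partial\Delta_{\md D}(f^0,g^0)$. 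This subdifferential criterion is what I would exploit throughout, so that establishing it delivers both the defining equations and, through the saddle-point inequalities, the least-favourable property itself.

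First I would compute the subdifferential of the smooth part $\widetilde\Delta$ from the factored representation of $\Delta(\vec h_{\tau}(f^0,g^0);f,g)$ through $\me h^0_{\tau,f}$, $\me h^0_{\tau,g}$ and $\Psi^0(e^{-i\lambda})$. The variational derivative in $f$ (respectively in $g$) produces, up to the frozen matrix $\Psi^0$ and its adjoint, exactly the rank-type kernel $(\me h^0_{\tau,g})(\me h^0_{\tau,g})^*$ (respectively $(\me h^0_{\tau,f})(\me h^0_{\tau,f})^*$), which are precisely the left-hand sides of \eqref{eq_4_1_f_n-n_c_fact}--\eqref{eq_5_4_f_n-n_c_fact}. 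The passage between this coefficient picture and the spectral one used in \eqref{eq_4_1_f_n-n_c}--\eqref{eq_5_4_f_n-n_c} is carried by the factorizations \eqref{fakt231_f_n-n_c}--\eqref{fakt233_f_n-n_c}, by the identity $\Psi(e^{-i\lambda})=\Psi_{\tau}(e^{-i\lambda})w_{\tau}(e^{-i\lambda})$ of \eqref{formula_11_f_n-n_c}, and by wrapping the $\Psi^0$-sandwiched relations on both sides with $(\Theta^0)^{\top}(\,\cdot\,)\overline{\Theta^0}$ and cancelling via $\Psi^0\Theta^0=E_M$ and $f^0+g^0=\Theta^0(\Theta^0)^*$.

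Next I would compute, for each of the four pairs $\md D_0^k\times\md D_{\varepsilon}^k$, the subdifferential of the indicator $\delta(\cdot|\md D)$, that is, the normal cone to $\md D$ at $(f^0,g^0)$. The moment constraints defining $\md D_0^k$ contribute $\lambda$-independent Lagrange multipliers, namely $\vec\alpha\cdot\vec\alpha^*$ for $\md D_0^1$, the scalar $\alpha^2$ for $\md D_0^2$, the diagonal matrix $\{\alpha_k^2\delta_{kl}\}$ for $\md D_0^3$, and $\alpha^2 B_1^{\top}$ for $\md D_0^4$; the $\varepsilon$-contamination constraints defining $\md D_{\varepsilon}^k$ add a constant multiplier together with a nonpositive function ($\gamma_1$, $\gamma_k^1$, $\gamma_1'$, or $\Gamma$) supported on the set where the contamination bound is active. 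Equating these normal-cone elements to the frozen kernels from the previous step and applying the same $(\Theta^0)^{\top}(\,\cdot\,)\overline{\Theta^0}$ wrapping yields precisely the systems \eqref{eq_4_1_f_n-n_c_fact}--\eqref{eq_5_4_f_n-n_c_fact}; this part runs in parallel with the proof of the non-factored theorem, from which the computation of the normal cones may be inherited verbatim.

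The main obstacle will be the careful evaluation of the subdifferentials for the contaminated classes, where the correct sign and support conditions on the auxiliary functions (for instance $\gamma_1\le0$ with $\gamma_1=0$ off the active set, and analogously for $\gamma_k^1$, $\gamma_1'$, $\Gamma$) must be read off from the subdifferential of the indicator of an $\varepsilon$-neighbourhood carrying a trace or linear-functional side constraint; it is this step that certifies $(f^0,g^0)$ as a genuine maximizer of the error rather than a mere stationary point, and it rests on the convexity of $\Delta_{\md D}$ already noted. Finally, granting that $(f^0,g^0)$ solves \eqref{minimax1f_f_n-n_c_fact} (or \eqref{minimax1g_f_n-n_c_fact}) and satisfies the minimality condition \eqref{umova11_f_n-n_c}, I would invoke Theorem \ref{thm3_f_n-n_c_fact} to conclude that the estimate built from $(f^0,g^0)$ has spectral characteristic \eqref{spectr A_f_n-n_c_fact} (or \eqref{spectr2 A_f_n-n_c_fact}); verifying $\vec h_{\tau}(f^0,g^0)\in H_{\md D}$ then closes the saddle-point inequalities and establishes that $(f^0,g^0)$ is least favourable in $\md D_0^k\times\md D_{\varepsilon}^k$.
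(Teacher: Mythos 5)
Your proposal is correct and follows essentially the same route as the paper: the paper also characterizes $(f^0,g^0)$ via the condition $0\in\partial\Delta_{\mathcal{D}}(f^0,g^0)$ for the convex functional $\Delta_{\mathcal{D}}(f,g)=\widetilde{\Delta}(f,g)+\delta(f,g|\mathcal{D})$, exploits the linearity of $\Delta(\vec h_{\tau}(f^0,g^0);f,g)$ in $(f,g)$ to read off the frozen kernels $(\me h^0_{\tau,g})(\me h^0_{\tau,g})^*$ and $(\me h^0_{\tau,f})(\me h^0_{\tau,f})^*$, matches them against the Lagrange-multiplier/normal-cone terms of the classes $\md D_0^{k}\times\md D_{\varepsilon}^{k}$, and passes to the factorized form of the equations via $\Psi^0\Theta^0=E_M$, $f^0+g^0=\Theta^0(\Theta^0)^*$ and relation (\ref{formula_11_f_n-n_c}), with the spectral characteristic then supplied by Theorem \ref{thm3_f_n-n_c_fact}.
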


\subsection{Semi-uncertain filtering problem in classes $\md D_0$ of least favorable spectral density}

Consider   the minimax (robust) filtering problem  for the functional $A\xi$ when the spectral density $g(\lambda)$ is known and the spectral density $f(\lambda)$ belongs to the sets of admissible spectral densities
$\md D_0^k$, $k=1,2,3,4$, defined in Subsection \ref{subsec_set_D0_f_n-n_c}.
Let   the spectral densities $f^0(\lambda)$ and $g(\lambda)$ admit the canonical factorizations (\ref{fakt1_f_n-n_c}), (\ref{fakt3_f_n-n_c}) and (\ref{fakt6_f_n-n_c}). In this case we obtain the following equations  for the least favourable spectral densities  in terms of the coefficients of these canonical factorizations:
\begin{multline}  \label{eq_4_1_semi_f_n-n_c_fact}
\left(
\sum_{j=0}^{\infty}( \overline{\psi}^0_{\tau}\me C^g_{\tau})_j e^{-i\lambda j}
\right)
\left(
\sum_{j=0}^{\infty}( \overline{\psi}^0_{\tau}\me C^g_{\tau})_j e^{-i\lambda j}
\right)^{*}
=
\\
=
    \left(\sum_{k=0}^{\infty}\theta^0(k)e^{-i\lambda k}\right)^{\top}
\vec{\alpha}
\cdot
\vec{\alpha}^{*}
  \overline{\left(\sum_{k=0}^{\infty}\theta^0(k)e^{-i\lambda k}\right)},
\end{multline}
\begin{multline}  \label{eq_4_2_semi_f_n-n_c_fact}
\left(
\sum_{j=0}^{\infty}( \overline{\psi}^0_{\tau}\me C^g_{\tau})_j e^{-i\lambda j}
\right)
\left(
\sum_{j=0}^{\infty}( \overline{\psi}^0_{\tau}\me C^g_{\tau})_j e^{-i\lambda j}
\right)^{*}
=
\\
=
\alpha^{2}  \left(\sum_{k=0}^{\infty}\theta^0(k)e^{-i\lambda k}\right)^{\top}
\overline{\left(\sum_{k=0}^{\infty}\theta^0(k)e^{-i\lambda k}\right)},
\end{multline}
\begin{multline}   \label{eq_4_3_semi_f_n-n_c_fact}
\left(
\sum_{j=0}^{\infty}( \overline{\psi}^0_{\tau}\me C^g_{\tau})_j e^{-i\lambda j}
\right)
\left(
\sum_{j=0}^{\infty}( \overline{\psi}^0_{\tau}\me C^g_{\tau})_j e^{-i\lambda j}
\right)^{*}
=
\\
=
   \left(\sum_{k=0}^{\infty}\theta^0(k)e^{-i\lambda k}\right)^{\top}
\left\{\alpha _{k}^{2} \delta _{kl} \right\}_{k,l=1}^{\infty}
  \overline{\left(\sum_{k=0}^{\infty}\theta^0(k)e^{-i\lambda k}\right)},
\end{multline}
\begin{multline} \label{eq_4_4_semi_f_n-n_c_fact}
\left(
\sum_{j=0}^{\infty}( \overline{\psi}^0_{\tau}\me C^g_{\tau})_j e^{-i\lambda j}
\right)
\left(
\sum_{j=0}^{\infty}( \overline{\psi}^0_{\tau}\me C^g_{\tau})_j e^{-i\lambda j}
\right)^{*}
=
\\
=
\alpha^{2}    \left(\sum_{k=0}^{\infty}\theta^0(k)e^{-i\lambda k}\right)^{\top}
B_{1}^{\top}
   \overline{\left(\sum_{k=0}^{\infty}\theta^0(k)e^{-i\lambda k}\right)},
\end{multline}

\noindent where $\alpha^{2}$, $\vec{\alpha}$,   $\alpha _{k}^{2}$ are the Lagrange multipliers,   the matrix $\me C^g_{\tau}$ is known and defined bby the coefficients of the canonical factorization of the spectral density $g(\lambda)$.

The following theorem  holds true.

\begin{thm}
 Suppose that the   spectral density $g(\lambda)$ is known and allow the canonical factorization (\ref{fakt6_f_n-n_c}).
The matrix function
$f^0(\lambda)$   determined by
   canonical factorizations   (\ref{fakt231_f_n-n_c}), (\ref{fakt232_f_n-n_c}) or (\ref{fakt233_f_n-n_c})
equations
(\ref{eq_4_1_semi_f_n-n_c_fact}), (\ref{eq_4_2_semi_f_n-n_c_fact}), (\ref{eq_4_3_semi_f_n-n_c_fact}), (\ref{eq_4_4_semi_f_n-n_c_fact})
and   restrictions on
the spectral density matrix from the corresponding classes $ \md  D_0^{k}$, $k=1,2,3,4$,
is
the least favourable in the class $ \md  D_0^{k}$, $k=1,2,3,4$, for the optimal linear
filtering of the functional $A\xi$ if
the function $f^0(\lambda)+g(\lambda)$ satisfies  minimality condition   (\ref{umova11_f_n-n_c}) and determines solution of the constrained optimization
problem (\ref{minimax3f_f_n-n_c_fact}).
  Coefficients $\{\psi^0_{\tau}(k):k\geq0\}$ and $\{\theta^0(k):k\geq0\}$ are connected by   relations  $\Psi^0(e^{-i\lambda})\Theta^0(e^{-i\lambda})=E_M$ and   (\ref{formula_11_f_n-n_c}). The minimax-robust spectral characteristic $\vec h_{\tau}(f^0,g)$ of the optimal estimate of the functional $A{\xi}$ is determined by   formula (\ref{spectr2 A_f_n-n_c_fact}).
\end{thm}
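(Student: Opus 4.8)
The plan is to treat this statement as the specialization of the general minimax scheme of the previous subsection to the semi-uncertain setting in which $g(\lambda)$ is fixed and regular and only $f(\lambda)$ ranges over one of the classes $\md D_0^k$. First I would note that, since $g$ admits the canonical factorization (\ref{fakt6_f_n-n_c}), the first summand $\|\widetilde{\Omega}_{\tau}D^{\tau}\me a\|^2$ in the MSE representation (\ref{poh A2_f_n-n_d_fact}) does not depend on $f$. Hence maximizing the mean square error $\Delta(f,g;\widehat{A}\xi)$ over $f\in\md D_0^k$ is equivalent to minimizing $\|\overline{\psi}_{\tau}\me C^g_{\tau}\|^2$, in which $\psi_{\tau}$ enters through the factorization of $(f+g)^{-1}$; this is exactly the constrained optimization problem (\ref{minimax3f_f_n-n_c_fact}). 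The associated spectral characteristic is the one of Theorem \ref{thm3_f_n-n_c_fact} assembled from the factorizations of $g$ and of $(f+g)^{-1}$, that is, formula (\ref{spectr2 A_f_n-n_c_fact}).

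Next I would recast (\ref{minimax3f_f_n-n_c_fact}) as the unconstrained problem (\ref{zad_unconst_extr_f_st_d}) by adding the indicator function $\delta(f\,|\,\md D_0^k)$, and use convexity of the objective so that optimality of $f^0$ becomes equivalent to the inclusion $0\in\partial\Delta_{\md D}(f^0,g)$. The core of the argument is the evaluation of the subdifferential of $\widetilde{\Delta}$ at $f^0$. Working from the representation of $\Delta(\vec h_{\tau}(f^0,g);f,g)$ in terms of $\me h^0_{\tau,g}$ and $\Psi^0$ obtained in the previous subsection, together with the relation (\ref{formula_11_f_n-n_c}) linking $\Psi_{\tau}$ and $\Psi$, the variation of the quadratic form in $f$ produces the left-hand factor $\bigl(\sum_{j\ge0}(\overline{\psi}^0_{\tau}\me C^g_{\tau})_j e^{-i\lambda j}\bigr)\bigl(\sum_{j\ge0}(\overline{\psi}^0_{\tau}\me C^g_{\tau})_j e^{-i\lambda j}\bigr)^{*}$ that appears on the left of (\ref{eq_4_1_semi_f_n-n_c_fact})--(\ref{eq_4_4_semi_f_n-n_c_fact}).

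For each class I would then identify the subdifferential of the indicator function with the appropriate Lagrange structure: a rank-one Hermitian matrix $\vec{\alpha}\cdot\vec{\alpha}^{*}$ for the moment constraint defining $\md D_0^1$, a scalar $\alpha^2$ for the trace constraint $\md D_0^2$, a diagonal matrix $\{\alpha_k^2\delta_{kl}\}$ for the componentwise constraints $\md D_0^3$, and $\alpha^2 B_1^{\top}$ for the weighted constraint $\md D_0^4$. Substituting the canonical factorization $f^0(\lambda)+g(\lambda)=\Theta^0(e^{-i\lambda})(\Theta^0(e^{-i\lambda}))^{*}$ on the right-hand side then turns each stationarity condition into the stated equations, with the coefficients tied together by $\Psi^0(e^{-i\lambda})\Theta^0(e^{-i\lambda})=E_M$ and (\ref{formula_11_f_n-n_c}). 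I would close the argument by verifying the saddle-point inequalities following (\ref{zad_zum_extr_f_f_n-n_c}): with $f^0$ so determined and $h^0=\vec h_{\tau}(f^0,g)\in H_{\md D}$, the left inequality holds because $\vec h_{\tau}(f^0,g)$ is the projection-optimal characteristic for the pair $(f^0,g)$, while the right inequality is precisely the maximality built into (\ref{minimax3f_f_n-n_c_fact}); hence $f^0$ is least favourable and $h^0$ is minimax-robust.

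I expect the main obstacle to be the subdifferential computation in the second step, since the factorization coefficients $\psi_{\tau}$ (equivalently $\Theta^0$ and $\Psi^0$) depend on $f$ only implicitly, through the canonical factorization of $(f+g)^{-1}$. Expressing the variation of $\|\overline{\psi}_{\tau}\me C^g_{\tau}\|^2$ in the closed matrix form required on the left of (\ref{eq_4_1_semi_f_n-n_c_fact})--(\ref{eq_4_4_semi_f_n-n_c_fact}) will demand careful bookkeeping with the operator relation (\ref{formula_11_f_n-n_c}) and with the linearity of $\me C^g_{\tau}$ in the data $\me a$.
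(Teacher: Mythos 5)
Your proposal is correct and follows essentially the same route as the paper, which states this theorem without a separate proof, treating it as a direct specialization of the general minimax scheme of Section 4 (the subdifferential condition $0\in\partial\Delta_{\md D}(f^0,g)$, the Lagrange-multiplier identification of the subdifferential of the indicator function for each class $\md D_0^k$, and the saddle-point inequalities). Your key reduction --- that with $g$ fixed and factorized only the term $\|\overline{\psi}_{\tau}\me C^{g}_{\tau}\|^2$ in (\ref{poh A2_f_n-n_d_fact}) depends on $f$, so the extremal problem becomes (\ref{minimax3f_f_n-n_c_fact}) --- is exactly the reduction the paper intends.
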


\section*{Appendix}
\emph{\textbf{Proof of Theorem \ref{thm1_f_n-n_c}}}.

The projection $\widehat{B}\vec \xi$  is characterized by the following
conditions:

1) $ \widehat{B}\vec \xi\in H^{0}(\vec\xi^{\,(d)}+\vec\eta^{\,(d)})$;

2) $(B\vec\xi-\widehat{B} \vec\xi)\perp
H^{0}(\vec\xi^{\,(d)}+\vec\eta^{\,(d)})$.
Condition 2) implies that  for all $j\leq0$ and $k\geq1$ the function
$\vec h_{\tau}(\lambda)$ satisfy the relation
\begin{multline*}
 \mt E(B\vec\xi-\widehat{B}\vec\xi)^{\top}
 (\overline{\xi^{(d)}_k(j,\tau)+\eta^{(d)}_k(j,\tau)})=
 \\
 =
 \ip\ld((\vec B_{\tau}(e^{-i\lambda }))^{\top}
 \frac{(1-e^{-i\lambda\tau})^{d}}{(i\lambda)^{d}}f(\lambda)--(\vec h_{\tau}(\lambda))^{\top}(f(\lambda)+g(\lambda))\rd)
 \times\\
 \times\frac{(1-e^{i\lambda\tau})^d}{(-i\lambda)^{d}}e^{-i\lambda j}\vec\delta_{k}d\lambda=0.
 \end{multline*}
Thus, the spectral characteristic $\vec  h_{\tau}(\lambda)$ allows a representation
\begin{align*}
  (\vec h_{\tau}(\lambda))^{\top}&=
 (\vec B_{\tau}(e^{-i\lambda }))^{\top}\frac{(1-e^{-i\lambda\tau})^d}{(i\lambda)^{d}}f(\lambda)(f(\lambda)+g(\lambda))^{-1}
 \\
 &\quad-\frac{(-i\lambda)^{d}
}{(1-e^{i\lambda\tau})^d}(\vec C_{\tau}(e^{i\lambda}))^{\top}(f(\lambda)+g(\lambda))^{-1},
\end{align*}
 where
 \[
 \vec C_{\tau}(e^{i\lambda})=\sum_{j=0}^{\infty}\vec c^{\,\tau}_je^{i\lambda
 (j+1)},\]
 and $\vec{c}^{\,\tau}_j=\{c^{\,\tau}_{kj}\}_{k=1}^{\infty}, j\geq 0$  are unknown vectors to be found.

 Condition 1) implies that the function $\vec h_{\tau}(\lambda)$ is of the form
\[
\vec{h}_{\tau}(\lambda)=\vec{h} (\lambda)(1-e^{-i\lambda\tau})^d
\frac{1}{(i\lambda)^d}, \quad \vec{h} (\lambda)=
\sum_{j=0}^{\infty}\vec{s}_je^{-i\lambda j}.
\]
Thus, for all $l\geq 1$
\be
 \ip \ld((\vec B_{\tau}(e^{-i\lambda}))^{\top}f(\lambda)
 -\frac{\lambda^{2d} }{|1-e^{i\lambda\tau}|^{2d}}(\vec C_{\tau}(e^{i\lambda}))^{\top}\rd)(f(\lambda)+g(\lambda))^{-1}
 e^{-i\lambda l}d\lambda=0.\label{spivv_um_1_f_n-n_c}\ee

One can conclude that relation (\ref{spivv_um_1_f_n-n_c}) is equivalent to the following system of linear equations
\[
 \sum_{m=0}^{\infty} R_{l+1,m}\vec b^{\tau}_m=\sum_{j=0}^{\infty} P_{l+1,j+1}^{\tau}\vec c^{\,\tau}_j,\quad l\geq1.\]
which can be written in a matrix form
\[
 \me R\me b^{\tau}=\me P_{\tau}\me c_{\tau},\]
where
\[
   \me c^{\,\tau} =((c^{\,\tau}_1)^{\top},(c^{\tau}_2)^{\top},(c^{\,\tau}_3)^{\top}, \ldots)^{\top}.
   \]
Therefore, the   coefficients $\vec{c}^{\,\tau}_j$, $j\geq0$, are calculated by the formula
\[
 \vec c^{\,\tau}_j=(\me P_{\tau}^{-1}\me R\me b^{\tau})_j,\]
where $(\me P_{\tau}^{-1}\me R\me b^{\tau})_j$ is the
 $j$th entry of the vector $\me P_{\tau}^{-1}\me R\me b^{\tau}$.

The derived expressions justify the formulas (\ref{spectr A_f_n-n_c}) and (\ref{poh A_f_n-n_c})  for calculating the spectral characteristic $\vec{h}_{\tau}(\lambda)$ and
the MSE
$\Delta(f,g;\widehat{A}\xi)$
of the estimate $\widehat{A}\xi$  of the functional $A\xi$.

\

\emph{\textbf{Proof of Lemma \ref{lema_fact_3_f_n-c_c}.}}

  Factorizations (\ref{fakt3_f_n-n_c}) and (\ref{fakt2_f_n-n_c})
\begin{align*}
\sum_{j\in\mr Z} \vec s(j)e^{i\lambda j}&=
\ld(f(\lambda)(f(\lambda)+g(\lambda))^{-1}\rd)^{\top}
\\
&=(\Psi_{\tau}(e^{-i\lambda}))^{\top}\overline{\Psi}_{\tau}(e^{-i\lambda})
\frac{|1-e^{i\lambda\tau}|^{2d}}{\lambda^{2d}}\overline{f}(\lambda)
\\
&=\sum_{l=0}^{\infty}\psi_{\tau}^{\top}(l)e^{-i\lambda l} \sum_{j\in\mr Z}Z_{\tau}(j)e^{i\lambda j}
\\
&=\sum_{j\in\mr Z} \sum_{l=0}^{\infty}\psi_{\tau}^{\top}(l)Z_{\tau}(l+j)e^{i\lambda j}.
\end{align*}
Then
\begin{align*}
 (\Theta^{\top}_{\tau}\me R\me b_{\tau})_m&=\sum_{j=0}^{\infty}\sum_{p=m}^{\infty}\theta^{\top}_{\tau}(p-m)R(p+j+1)
 \vec b^{\tau}_j
 \\&=
 \sum_{j=0}^{\infty}\sum_{p=m}^{\infty}\sum_{l=0}^{\infty}
\theta^{\top}_{\tau}(p-m)\psi_{\tau}^{\top}(l)Z_{\tau}(l+p+j+1)\vec b^{\tau}_j
\\&=
 \sum_{j=0}^{\infty}\sum_{p=m}^{\infty}\sum_{l=p}^{\infty}
(\psi_{\tau}(l-p)\theta_{\tau}(p-m))^{\top}Z_{\tau}(l+j+1)\vec b^{\tau}_j
\\&=
 \sum_{j=0}^{\infty}\sum_{l=m}^{\infty}\mt{diag}_M(\delta_{l,m})Z_{\tau}(l+j+1)\vec b^{\tau}_j
\\&=
 \sum_{j=0}^{\infty}Z_{\tau}(m+j+1)\vec b^{\tau}_j.
 \end{align*}
A representation for  $Z_{\tau}(j)$ follows from
\[
\sum_{j\in\mr Z}Z_{\tau}(j)e^{i\lambda j}
=\overline{\Psi}_{\tau}(e^{-i\lambda})\frac{|1-e^{i\lambda\tau}|^{2d}}{\lambda^{2d}}\overline{f}(\lambda)
=\sum_{j\in\mr Z} \sum_{l=0}^{\infty}\overline{\Psi}_{\tau}(l)\overline{f}_{\tau}(l-j)e^{i\lambda j}.
\]
On the other hand,
\begin{align*}
\sum_{j\in\mr Z}\vec s(k)e^{i\lambda j}&=
\ld(f(\lambda)(f(\lambda)+g(\lambda))^{-1}\rd)^{\top}
\\
&=
\ld((f(\lambda)+g(\lambda)-g(\lambda))(f(\lambda)+g(\lambda))^{-1}\rd)^{\top}
\\
&=
\ld(E_{\infty}-g(\lambda)(f(\lambda)+g(\lambda))^{-1}\rd)^{\top}
\\
&=E_{\infty}-(\Psi_{\tau}(e^{-i\lambda}))^{\top}\overline{\Psi}_{\tau}(e^{-i\lambda})
\frac{|1-e^{i\lambda\tau}|^{2d}}{\lambda^{2d}}\overline{g}(\lambda)
\\
&=E_{\infty}-\sum_{l=0}^{\infty}\psi_{\tau}^{\top}(l)e^{-i\lambda l} \sum_{j\in\mr Z}X_{\tau}(j)e^{i\lambda j}
\\
&=E_{\infty}-\sum_{j\in\mr Z} \sum_{l=0}^{\infty}\psi_{\tau}^{\top}(l)X_{\tau}(l+j)e^{i\lambda j}
\\&=
 \sum_{j=0}^{\infty}Z_{\tau}(m+j+1)\vec b^{\tau}_j,
\end{align*}
where\[
\sum_{j\in\mr Z}X_{\tau}(j)e^{i\lambda j}
=\overline{\Psi}_{\tau}(e^{-i\lambda})\frac{|1-e^{i\lambda\tau}|^{2d}}{\lambda^{2d}}
\overline{g}(\lambda)
=\sum_{k\in\mr Z} \sum_{l=0}^{\infty}\overline{\Psi}_{\tau}(l)\overline{g}_{\tau}(l-k)e^{i\lambda k}.
\]
Thus, coefficients $Z_{\tau}(j)$ and $X_{\tau}(j)$  are related as
\[\left\{
\begin{array}{rcl}
    E_{\infty}-\sum\limits_{l=0}^{\infty}\psi_{\tau}^{\top}(l)X_{\tau}(l)&=&\sum\limits_{l=0}^{\infty}\psi_{\tau}^{\top}(l)Z_{\tau}(l)
\\
    -\sum\limits_{l=0}^{\infty}\psi_{\tau}^{\top}(l)X_{\tau}(l+k)&=&\sum\limits_{l=0}^{\infty}\psi_{\tau}^{\top}(l)Z_{\tau}(l+k),\quad k\neq 0,
\end{array}
\right.
\]
which implies
\begin{align*}
 (\Theta^{\top}_{\tau}\me R\me b_{\tau})_m&=
 \sum_{j=0}^{\infty}\sum_{p=m}^{\infty}\sum_{l=0}^{\infty}
\theta^{\top}_{\tau}(p-m)\psi_{\tau}^{\top}(l)Z_{\tau}(l+p+j+1)\vec b^{\tau}_j
\\
&=-
 \sum_{j=0}^{\infty}\sum_{p=m}^{\infty}\sum_{l=0}^{\infty}
\theta^{\top}_{\tau}(p-m)\psi_{\tau}^{\top}(l)X_{\tau}(l+p+j+1)\vec b^{\tau}_j
\\&=-
 \sum_{j=0}^{\infty}X_{\tau}(m+j+1)\vec b^{\tau}_j.
 \end{align*}

 \

 \emph{\textbf{Proof of Theorem \ref{thm3_f_n-n_c_fact}.}}

Making use of Lemma   $\ref{lema_fact_3_f_n-c_c}$ rewrite formulas (\ref{spectr A_f_n-n_c}) and (\ref{poh A_f_n-n_c}). Make the following transformations:
\begin{align*}
&\notag\frac{\lambda^{2d}}{|1-e^{i\lambda\tau}|^{2d}}
\ld((f(\lambda)+g(\lambda))^{-1}\rd)^{\top}\ld(\sum_{k=0}^{\infty} (\me
 P_{\tau}^{-1}\me R\me b^{\tau})_ke^{i\lambda
 (k+1)}\rd)
 \\&= \ld(\sum_{k=0}^{\infty}\psi_{\tau}^{\top}(k)e^{-i\lambda k}\rd)\sum_{j=0}^{\infty}\sum_{k=0}^{\infty}
 \overline{\psi}_{\tau}(j)(\overline{\Theta}_{\tau}\widetilde{\me e}_{\tau})_ke^{i\lambda(k+j+1)}
 \\&= \ld(\sum_{k=0}^{\infty}\psi_{\tau}^{\top}(k)e^{-i\lambda k}\rd)\sum_{m=0}^{\infty}\sum_{p=0}^{m} \sum_{k=p}^m\overline{\psi}_{\tau}(m-k)\overline{\theta}_{\tau}(k-p)\widetilde{e}_{\tau}(p)e^{i\lambda (m+1)}
\\&= \ld(\sum_{k=0}^{\infty}\psi_{\overline{\tau}}^{\top}(k)e^{-i\lambda k}\rd)\sum_{m=0}^{\infty}\sum_{p=0}^{m}\mt{diag}_M(\delta_{m,p})\widetilde{e}_{\tau}(m)e^{i\lambda(m+1)}
 \\&= \ld(\sum_{k=0}^{\infty}\psi_{\overline{\tau}}^{\top}(k)e^{-i\lambda k}\rd)\sum_{m=0}^{\infty}\widetilde{e}_{\tau}(m)e^{i\lambda(m+1)},
 \end{align*}
and
\begin{align}
  \notag &
\ld((f(\lambda)+g(\lambda))^{-1}\rd)^{\top}(f(\lambda))^{\top}
\vec B_{\tau}(e^{-i\lambda }) =
 \\
\notag &= \Psi_{\tau}^{\top}(e^{-i\lambda})\overline{\Psi_{\tau}(e^{-i\lambda})}
\frac{|1-e^{i\lambda\tau}|^{2d}}{\lambda^{2d}}\overline{f(\lambda)}
B_{\tau}(e^{-i\lambda })
 \\
 \notag &=\ld(\sum_{k=0}^{\infty}\psi^{\top}_{\tau}(k)e^{-i\lambda k}\rd)
 \sum_{m\in \mr Z}^{\infty}
\sum_{j=0}^{\infty}Z_{\tau}(m+j)\vec b^{\tau}_je^{i\lambda m}.\label{simple_sp_char_part2_f_n_d}
\end{align}
 Then   obtain:
\begin{align*}
\notag \vec{h}_{\tau}(\lambda)
&=\frac{(1-e^{-i\lambda\tau})^d}{(i\lambda)^{d}}
\ld(\sum_{k=0}^{\infty}\psi^{\top}_{\tau}(k)e^{-i\lambda k}\rd)
\sum_{m=0}^{\infty}\sum_{j=0}^{\infty}Z_{\tau}(j-m)\vec b^{\tau}_je^{-i\lambda m}
\\
\notag &=\frac{(1-e^{-i\lambda\tau})^d}{(i\lambda)^{d}}
\ld(\sum_{k=0}^{\infty}\psi^{\top}_{\tau}(k)e^{-i\lambda k}\rd)
\\
\notag &\quad \times\sum_{m=0}^{\infty}\sum_{j=0}^{\infty}\sum_{p=m}^{\infty}
\overline{\psi}_{\tau}(p-m)\overline{f}_{\tau}(p-j)\vec b^{\tau}_je^{-i\lambda m}
\\
\notag &=\frac{(1-e^{-i\lambda\tau})^d}{(i\lambda)^{d}}
\ld(\sum_{k=0}^{\infty}\psi^{\top}_{\tau}(k)e^{-i\lambda k}\rd)
\sum_{m=0}^{\infty}( (\widetilde{\Psi}_{\tau})^*\me G^{f}_{\tau}\me b^{\tau})_m e^{-i\lambda m}
\\
 &=\frac{(1-e^{-i\lambda\tau})^d}{(i\lambda)^{d}}
\ld(\sum_{k=0}^{\infty}\psi^{\top}_{\tau}(k)e^{-i\lambda k}\rd)
\sum_{m=0}^{\infty}(\overline{\psi}_{\tau} \me C^{f}_{\tau,})_m e^{-i\lambda m}.
\end{align*}
Here
\[
(\overline{\psi}_{\tau} \me C^{f}_{\tau} )_m=\sum_{k=0}^{\infty}\overline{\psi}_{\tau}(k)\me c^{f}_{\tau}(k+m),
\]
\begin{align*}
  \me c^{f}_{\tau}(m)&=\sum_{k=0}^{\infty}\overline{f}_{\tau}(m-k)\vec b^{\tau}_k
=\sum_{l=0}^{\infty}\overline{\phi}_{\tau}(l)\sum_{k=0}^{l+m}\phi_{\tau}^{\top}(l+m-k)\vec b^{\tau}_k
\\
&=\sum_{l=0}^{\infty}\overline{\phi}_{\tau}(l)(\widetilde{\Phi}_{\tau}\me  b^{\tau})_{l+m}=(\widetilde{\Phi}_{\tau}^*\widetilde{\Phi}_{\tau}\me b^{\tau})_{m}.
\end{align*}
The MSE $\Delta(f,g;\widehat{A}\xi)$ is calculated by the formula
\begin{align*}
\notag \Delta(f,g;\widehat{A}\xi)&=\Delta(f,g;\widehat{B}\vec\xi)= \mt E|B\vec\xi-\widehat{B}\vec\xi|^2
 \\
 \notag &= \frac{1}{2\pi}\int_{-\pi}^{\pi}(\vec B_{\tau}(e^{-i\lambda}))^{\top}\frac{|1-e^{i\lambda\tau}|^{2d}}{\lambda^{2d}}f(\lambda)
 \overline{\vec B_{\tau}(e^{-i\lambda})}d\lambda
  \\
 \notag & \quad+
 \frac{1}{2\pi}\int_{-\pi}^{\pi}(\vec h_{\tau}(\lambda))^{\top}(f(\lambda)+g(\lambda))
 \overline{\vec h_{\tau}(\lambda)}d\lambda
 \\
 \notag &\quad -\frac{1}{2\pi}\int_{-\pi}^{\pi}(\vec h_{\tau}(\lambda))^{\top}
 \frac{(1-e^{i\lambda\tau})^d}{(-i\lambda)^{d}}
 \overline{\vec B_{\tau}(e^{-i\lambda})}d\lambda
  \\
 \notag &\quad-
 \frac{1}{2\pi}\int_{-\pi}^{\pi}(\vec B_{\tau}(e^{-i\lambda}))^{\top}
  \frac{(1-e^{-i\lambda\tau})^d}{(i\lambda)^{d}}f(\lambda)
  \overline{\vec h_{\tau}(\lambda)}d\lambda
 \\
 &=\|\widetilde{\Phi}_{\tau}\me b^{\tau}\|^2-\|\overline{\psi}_{\tau} \me C^{f}_{\tau}\|^2.
 \end{align*}
  Suppose that factorizations (\ref{fakt1_f_n-n_c}) and (\ref{fakt6_f_n-n_c})
   take place. Then from Lemma \ref{lema_fact_3_f_n-c_c} obtain
   \begin{multline*}
     \frac{\lambda^{2d}}{|1-e^{i\lambda\tau}|^{2d}}
\ld((f(\lambda)+g(\lambda))^{-1}\rd)^{\top}\vec C_{\tau}(e^{i\lambda})
 =  \\
  =-\ld(\sum_{k=0}^{\infty}\psi_{\overline{\tau}}^{\top}(k)e^{-i\lambda k}\rd)\sum_{m=0}^{\infty}
 \sum_{j=0}^{\infty}X_{\tau}(m+j+1)\vec b^{\tau}_je^{i\lambda(m+1)}.
   \end{multline*}
  Next,
   \begin{align}
  \notag &
\ld((f(\lambda)+g(\lambda))^{-1}\rd)^{\top}(f(\lambda))^{\top}
\vec B_{\tau}(e^{-i\lambda })
\\
\notag &=\vec B_{\tau}(e^{-i\lambda })- \ld((f(\lambda)+g(\lambda))^{-1}\rd)^{\top}(g(\lambda))^{\top}
\vec B_{\tau}(e^{-i\lambda })
 \\
\notag &=\vec B_{\tau}(e^{-i\lambda })-  \Psi_{\tau}^{\top}(e^{-i\lambda})\overline{\Psi_{\tau}(e^{-i\lambda})}
\frac{|1-e^{i\lambda\tau}|^{2d}}{\lambda^{2d}}\overline{g(\lambda)}
\vec B_{\tau}(e^{-i\lambda })
 \\
 \notag &=\vec B_{\tau}(e^{-i\lambda })- \ld(\sum_{k=0}^{\infty}\psi^{\top}_{\tau}(k)e^{-i\lambda k}\rd)
 \sum_{m\in \mr Z}^{\infty}
\sum_{j=0}^{\infty}X_{\tau}(m+j)\vec b^{\tau}_je^{i\lambda m}.\label{simple_sp_char_part2_f_n_d}
\end{align}
Finally,
 \begin{align*}
\notag \vec{h}_{\tau}(\lambda)
&=\frac{(1-e^{-i\lambda\tau})^d}{(i\lambda)^{d}}\ld(\vec B_{\tau}(e^{-i\lambda })
-
\Psi_{\tau}^{\top}(e^{-i\lambda})
\sum_{m=0}^{\infty}\sum_{j=0}^{\infty}X_{\tau}(j-m)\vec b^{\tau}_je^{-i\lambda m}\rd)
\\
\notag &=\frac{(1-e^{-i\lambda\tau})^d}{(i\lambda)^{d}}\ld(\vec B_{\tau}(e^{-i\lambda })
-
\Psi_{\tau}^{\top}(e^{-i\lambda})
\sum_{m=0}^{\infty}( (\widetilde{\Psi}_{\tau})^*\me G^{g}_{\tau}\me b^{\tau})_m e^{-i\lambda m}\rd)
\\
 &=\frac{(1-e^{-i\lambda\tau})^d}{(i\lambda)^{d}}\ld(\vec B_{\tau}(e^{-i\lambda })
-
\Psi_{\tau}^{\top}(e^{-i\lambda})
\sum_{m=0}^{\infty}(\overline{\psi}_{\tau} \me C^{g}_{\tau})_m e^{-i\lambda m}\rd),
\end{align*}
 where
\[
(\overline{\psi}_{\tau} \me C^{g}_{\tau} )_m=\sum_{k=0}^{\infty}\overline{\psi}_{\tau}(k)\me c^{g}_{\tau}(k+m),
\]
\begin{align*}
  \me c^{g}_{\tau}(m)&=\sum_{k=0}^{\infty}\overline{g}_{\tau}(m-k)\vec b^{\tau}_k
=\sum_{l=0}^{\infty}\overline{\omega}_{\tau}(l)\sum_{k=0}^{l+m}
\omega_{\tau}^{\top}(l+m-k)\vec b^{\tau}_k
\\
&=\sum_{l=0}^{\infty}\overline{\omega}_{\tau}(l)(\widetilde{\Omega}_{\tau}\me  b^{\tau})_{l+m}=(\widetilde{\Omega}_{\tau}^*\widetilde{\Omega}_{\tau}\me b^{\tau})_{m}.
\end{align*}
The  MSE is calculated by the formula
\[
 \Delta(f,g;\widehat{A}\xi)=\|\widetilde{\Omega}_{\tau} \me b^{\tau}\|^2-\|\overline{\psi}_{\tau} \me C^{g}_{\tau}\|^2.
\]

\end{document}